\documentclass[11pt,letterpaper]{amsart}

\usepackage{amssymb}
\usepackage{amsthm}
\usepackage{amsxtra}
\usepackage{mathrsfs}
\usepackage{graphicx}

\usepackage{afterpage}

\oddsidemargin -0.1in

\evensidemargin -0.1in

\textwidth6.70in

\topmargin-0.30in

\textheight8.8in

\newcounter{myenum}

\newcommand{\R}{\operatorname{Re}}
\newcommand{\I}{\operatorname{Im}}
\newcommand{\ds}{\displaystyle}

\newtheorem{theorem}{Theorem}

\newtheorem{proposition}[theorem]{Proposition}
\newtheorem{lemma}[theorem]{Lemma}
\newtheorem{corollary}[theorem]{Corollary}

\newtheorem{claim}[theorem]{Claim}

\theoremstyle{remark}
\newtheorem{remark}[theorem]{Remark}

\newcommand{\cR}{\mathbb{R}}
\newcommand{\cS}{\mathcal{S}}
\newcommand{\cT}{\mathcal{T}}

\newcommand{\grad}{\nabla}

\DeclareMathOperator{\re}{Re} \DeclareMathOperator{\im}{Im}

\newcommand{\eps}{\varepsilon}

%========================================

\numberwithin{equation}{section}

\numberwithin{theorem}{section}

\numberwithin{table}{section}

\numberwithin{figure}{section}

\title[Collapse and scattering for NLS]
{Going beyond the threshold:\\
scattering and blow-up
in the focusing NLS equation}

\author{Thomas Duyckaerts}
\address{Universit\'e Paris 13, France}
\author{Svetlana Roudenko}
\address{The George Washington University, USA}

\begin{document}

\begin{abstract}
We study the focusing nonlinear Schr\"odinger equation $i\partial_t u +\Delta u + |u|^{p-1}u=0$, $x\in \mathbb{R}^N$, in the $L^2$-supercritical regime with finite energy and finite variance initial data.
We investigate solutions above the energy (or mass-energy) threshold.
In our first result, we extend the known scattering versus blow-up dichotomy above that threshold for finite variance solutions in the energy-subcritical and energy-critical regimes, obtaining scattering and blow-up criteria for solutions with arbitrarily large mass and energy. As a consequence, we characterize the behavior of the ground state initial data modulated by a quadratic phase. Our second result gives two blow up criteria, which are also applicable in the energy-supercritical NLS setting. We finish with various examples illustrating our results.
\end{abstract}

\maketitle

\section{Introduction}

Consider the focusing nonlinear Schr\"odinger (NLS) equation on
$\mathbb{R}^N$:
\begin{equation}
 \label{E:NLS}
i\partial_t u +\Delta u + |u|^{p-1}u=0, \quad (x,t)\in \mathbb{R}^N
\times \mathbb{R},
\end{equation}
where $u=u(x,t)$ is complex-valued and the nonlinearity $p > 1+\frac4{N}$.
The solutions of this equation conserve mass, energy and momentum:
\begin{align*}
M[u](t) & = \int |u(x,t)|^2 \,dx = M[u](0),\\
E[u](t) & = \frac12\int |\nabla u(x,t)|^2 - \frac1{p+1}\int
|u(x,t)|^{p+1} \,dx = E[u](0),\\
P[u](t) & = \I \int \grad u(x,t) \, \bar{u}(x,t) \, dx = P[u](0).
\end{align*}

The equation \eqref{E:NLS} has scaling: $\ds u_\lambda(x,t) =
\lambda^{\frac2{p-1}} u(\lambda x, \lambda^2 t)$ is a solution if so
is  $u(x,t)$. This scaling produces a scale-invariant Sobolev norm $\dot H^{s_c}$ with
$$
s_c = \frac{N}2 - \frac2{p-1}.
$$
The nonlinearity restriction $p>1+\frac4{N}$ implies that we only consider the case $s_c > 0$.

\subsection{Scattering and blow up in the energy subcritical and critical cases}
For $p>1$, $N \geq 1$ such that $0\leq s_c<1$, we let $Q=Q_{p,N}$ be the unique $H^1$ radial positive solution of
\begin{equation}
\label{E:Q-groundstate}
\Delta Q - \left(1-s_c\right)\, Q + |Q|^{p-1}Q = 0.
\end{equation}
If $s_c=1$, i.e., $N\geq 3$ and $p=\frac{2N}{N-2}$, since the equation \eqref{E:Q-groundstate} is invariant by scaling, the radial positive solution equation \eqref{E:Q-groundstate} is no longer unique. In this case, we let
\begin{equation*}
Q_{\frac{2N}{N-2},N}=\frac{1}{\left(1+\frac{|x|^2}{N(N-2)}\right)^{\frac{N-2}{2}}},
\end{equation*}
which is often denoted by $W$, see \cite{KM}. In both cases, $Q_{p,N}$ is smooth. If $s_c<1$, $Q_{p,N}$ and all its derivatives decay exponentially at infinity. If $s_c=1$, $Q_{p,N}=W$ belongs to the homogeneous space $\dot{H}^1$. It is in $L^2$ if and only if $N\geq 5$. In all cases,
\begin{equation}
\label{E:uQ}
 u_Q(x,t)=e^{i(1-s_c)t}Q(x)
\end{equation}
is a solution of \eqref{E:NLS}. Let us emphasize that the choice of the constant $1-s_c$ in front of \eqref{E:Q-groundstate} is for convenience. If $s_c<1$, we can replace this constant by any positive constant by scaling. Similarly, if $s_c=1$, the choice $Q_{p,N}=W$ is arbitrary, and we could replace $W$ by $\lambda^{\frac{N}{2}-1}W(\lambda x)$ for any $\lambda>0$. We will state all our results using scale invariant quantities that do not depend on these choices.

One useful constant scaling quantity is $M[u]^{1-s_c} E[u]^{s_c}$, which we renormalize (for $s_c>0$) as
\begin{equation}
\label{E:ME}
\mathcal{ME} = \frac{M[u]^{\frac{1-s_c}{s_c}} E[u]}{M[Q]^{\frac{1-s_c}{s_c}} E[Q]}
\end{equation}
and call it the {\it mass-energy}. As it turns out, it is important to know its size relative to 1. We refer to $\mathcal{ME}=1$ as the {\it mass-energy threshold} (or the {\it energy threshold}, $\mathcal{E}=1$, when $s_c=1$).
The other useful scaling quantities (changing in time) are
$\|u\|^{1-s_c}_{L^2(\cR^N)} \|\nabla u(t)\|^{s_c}_{L^2(\cR^N)}$ and $\|u\|^{1-s_c}_{L^2(\cR^N)} \| u(t)\|^{\frac{p+1}{2}s_c}_{L^{p+1}(\cR^N)}$, for the purpose of this paper we use the last one.
\medskip

\underline{The case $0 <s_c<1$} (the mass-supercritical and energy-subcritical NLS), or
\begin{equation}
 \label{E:p-subcrit}
\frac{4}{N}+1<p<\infty ~\mbox{when}~ N=1,2 \quad \mbox{and} \quad \frac{4}{N}+1<p<\frac{4}{N-2}+1 ~\mbox{when} ~ N\geq 3.
\end{equation}
A physically important equation in this range ($s_c=\frac12$) is the 3d cubic NLS equation, for which the behavior of solutions was studied in series of papers \cite{CMP, DHR, AMRX, DR, CPDE}. It was later extended in \cite{CG} to the 2d quintic NLS (also $s_c=\frac12$) and then generalized to other dimension and nonlinearities ($0<s_c<1$) in \cite{G} (see also \cite{G-thesis}, and \cite{AN}, \cite{C+chinese}). When $\mathcal{ME} < 1$, the global behavior of solutions is completely understood, which we summarize in the following
\begin{theorem}
\label{T:DHR}
Let $u(x,t)$ be a solution of \eqref{E:NLS}, $0 < s_c <1$, with $u_0\in H^1(\cR^N)$. Assume $0<\mathcal{ME}<1$.
\begin{enumerate}
\item \label{I:Bup}
If $M[u_0]^{1-s_c}\left(\int |u_0|^{p+1}\right)^{s_c}< M[Q]^{1-s_c}\left(\int |Q|^{p+1}\right)^{s_c}$, then $u(t)$ exists globally  and, in fact, scatters in both time directions,  in $H^1$, to a linear solution.
\item \label{I:scatt}
If $M[u_0]^{1-s_c}\left(\int |u_0|^{p+1}\right)^{s_c}>M[Q]^{1-s_c}\left(\int |Q|^{p+1}\right)^{s_c}$, either $u(t)$ blows-up in finite positive time or there exists a sequence $t_n\nearrow +\infty$ such that $\lim_n\|\nabla u(t_n)\|_{L^2} = \infty$.  A similar statement holds for negative time. Furthermore, if $u_0$ has finite variance or $u_0$ is radial, then $u(t)$ blows-up in finite positive time and finite negative time.
\end{enumerate}
\end{theorem}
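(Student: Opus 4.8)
The plan is to combine the variational characterization of $Q$ with a trapping argument driven by the conservation laws, and then to treat the two regimes separately: scattering by a concentration--compactness/rigidity scheme and blow-up by a virial identity. First I would record the Pohozaev relations obtained by pairing \eqref{E:Q-groundstate} with $Q$ and with $x\cdot\grad Q$; these link $\|\grad Q\|_{L^2}^2$, $\|Q\|_{L^2}^2$ and $\|Q\|_{L^{p+1}}^{p+1}$, fix the value of $E[Q]$, and identify $Q$ as the optimizer of the sharp Gagliardo--Nirenberg inequality
\begin{equation*}
\|f\|_{L^{p+1}}^{p+1}\le C_{GN}\,\|\grad f\|_{L^2}^{\frac{N(p-1)}{2}}\,\|f\|_{L^2}^{\,p+1-\frac{N(p-1)}{2}} .
\end{equation*}
Inserting this bound into $E[u]$ and multiplying by $M[u]^{\frac{1-s_c}{s_c}}$ bounds the (conserved) normalized energy below by a scale-invariant, mass-independent function $F$ of $y(t)\defeq M[u]^{\frac{1-s_c}{2s_c}}\|\grad u(t)\|_{L^2}$. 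Since $p>1+\frac4N$ forces the gradient exponent $\frac{N(p-1)}{2}>2$, the function $F$ increases then decreases with a single interior maximum at the value $y_*$ realized by $Q$, and $F(y_*)$ equals the normalized $E[Q]$.

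Next I would observe that $M[u]^{1-s_c}(\int|u|^{p+1})^{s_c}=\bigl(\|u\|_{L^2}^{1-s_c}\|u\|_{L^{p+1}}^{\frac{p+1}{2}s_c}\bigr)^2$ and that, at fixed conserved $E[u]$, the quantity $\|u\|_{L^{p+1}}^{p+1}=(p+1)\bigl(\tfrac12\|\grad u\|_{L^2}^2-E[u]\bigr)$ is increasing in $\|\grad u\|_{L^2}^2$; this lets me check that the $L^{p+1}$ thresholds in (a) and (b) are equivalent to $y(0)<y_*$ and $y(0)>y_*$ respectively. The assumption $0<\mathcal{ME}<1$ says precisely that the conserved normalized energy lies strictly below the barrier $F(y_*)$. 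Since $t\mapsto y(t)$ is continuous on the maximal existence interval and can never equal $y_*$ (that would push the energy up to $F(y_*)$), it stays trapped on its initial side, with a gap quantified by $1-\mathcal{ME}$: $y(t)<y_*$ throughout in case (a) and $y(t)>y_*$ throughout in case (b). In particular case (a) gives $\sup_t\|\grad u(t)\|_{L^2}<\infty$, hence global existence.

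Scattering in case (a) then follows from the Kenig--Merle concentration--compactness/rigidity method. The local theory together with the perturbation/stability lemmas yields a scattering criterion---a global solution with finite global Strichartz-type norm scatters, and this norm is finite for small data---so scattering can fail only at a critical threshold value of $\mathcal{ME}$. A linear profile decomposition, combined with the uniform bound above and with stability, produces a minimal non-scattering solution $u_c$ whose trajectory is precompact in $H^1$ modulo scaling and translation. A localized virial (Morawetz-type) estimate, available thanks to this compactness together with the strict sub-threshold trapping, then forces $u_c\equiv0$, a contradiction; hence every solution in case (a) scatters in both time directions. I expect this rigidity step to be the main obstacle: extracting the compact critical element from the profile decomposition and controlling the spatial tails in the truncated virial identity are the delicate points.

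For case (b) I would use the virial identity. When $V(t)\defeq\int|x|^2|u|^2\,dx<\infty$,
\begin{equation*}
V''(t)=8\|\grad u(t)\|_{L^2}^2-\frac{4N(p-1)}{p+1}\|u(t)\|_{L^{p+1}}^{p+1}=4N(p-1)E[u]-\bigl(2N(p-1)-8\bigr)\|\grad u(t)\|_{L^2}^2,
\end{equation*}
and in the trapping region $y(t)>y_*$ the gap from $\mathcal{ME}<1$ forces $\|\grad u(t)\|_{L^2}^2$ to exceed the critical value $\frac{2N(p-1)E[u]}{N(p-1)-4}$ by a fixed amount, so $V''(t)\le-\delta<0$ uniformly; a nonnegative function with such a second derivative must reach $0$ in finite forward and backward time, giving finite-time blow-up. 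For radial infinite-variance data I would replace $|x|^2$ by a smooth truncation equal to $|x|^2$ on $\{|x|\le R\}$ and absorb the resulting error terms via the radial Sobolev inequality once $R$ is large relative to the mass and the gap, again obtaining $V_R''\le-\delta<0$. Without finite variance or symmetry the truncation errors cannot be controlled uniformly in time, so only the weaker statement survives: if the solution were global forward with $\sup_t\|\grad u(t)\|_{L^2}<\infty$, then a Galilean normalization of the momentum followed by the same truncated computation yields a contradiction, whence either finite positive-time blow-up or a sequence $t_n\nearrow+\infty$ with $\|\grad u(t_n)\|_{L^2}\to\infty$ (and symmetrically in negative time). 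The recurring delicate point is to make the coercive lower bound on $\|\grad u\|_{L^2}^2$, equivalently the strict negativity of $V''$, quantitative in terms of $1-\mathcal{ME}$.
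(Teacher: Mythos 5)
Your outline is essentially correct, but note that the paper does not prove Theorem \ref{T:DHR} at all: it is a survey statement imported from the literature (\cite{CMP,DHR,AMRX,CPDE} for the 3d cubic case, \cite{CG,G,C+chinese} for general $0<s_c<1$), and the only ingredient the paper itself supplies is Claim \ref{Cl:variational}, which converts the usual gradient-threshold formulation of those results into the $L^{p+1}$-threshold formulation stated here. Your proposal is therefore a reconstruction of the cited proofs---sharp Gagliardo--Nirenberg plus Pohozhaev trapping below the energy barrier, the Kenig--Merle concentration-compactness/rigidity scheme for part (a), Glassey convexity of the variance and the truncated virial with the radial Sobolev inequality for part (b), and the zero-momentum localized-virial argument of \cite{CPDE} for the grow-up alternative---and as a roadmap it is faithful to how the theorem is actually proved. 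The step where your sketch is thinnest is precisely the step the paper does prove: the equivalence of the $L^{p+1}$ condition with $y(0)\lessgtr y_*$. Sharp Gagliardo--Nirenberg gives only the implication from the gradient bound to the $L^{p+1}$ bound; the converse genuinely uses $\mathcal{ME}\le 1$, via
\[
E[Q]\,M[Q]^{\frac{1-s_c}{s_c}}\;\geq\; E[u]\,M[u]^{\frac{1-s_c}{s_c}}\;>\;\frac12\int|\nabla Q|^2\,M[Q]^{\frac{1-s_c}{s_c}}-\frac{1}{p+1}\int|u|^{p+1}\,M[u]^{\frac{1-s_c}{s_c}},
\]
combined with the Pohozhaev identity for $Q$ (this is exactly the proof of Claim \ref{Cl:variational}); your remark that $\|u\|_{L^{p+1}}^{p+1}$ is increasing in $\|\nabla u\|_{L^2}^2$ at fixed energy does not substitute for it, since $u$ and $Q$ do not have the same normalized energy. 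Similarly, in the non-radial infinite-variance case, ``the same truncated computation'' undersells the crux of \cite{CPDE}: after the Galilean boost (legitimate because it leaves $M[u]$ and $\int|u|^{p+1}$ unchanged and decreases $E[u]$, hence preserves the hypotheses of part (b)), the exterior error terms in the localized virial over a time window of length $T$ must be absorbed by taking $R\sim T$ and using the finite speed of mass transfer, $\bigl|\tfrac{d}{dt}\int\chi(x/R)|u|^2\bigr|\lesssim \tfrac1R\,\|\nabla u(t)\|_{L^2}\,M[u]^{1/2}$, under the contradiction hypothesis $\sup_t\|\nabla u(t)\|_{L^2}<\infty$; this bookkeeping is the heart of that argument and needs to be carried out, not just invoked. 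Both are fixable sketch-level omissions rather than wrong turns.
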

\begin{remark}
The above theorem is usually formulated with the gradient $\|\nabla u\|_{L^2}$ instead of the $\|u\|_{L^{p+1}}$ norm, we show the equivalence in Claim \ref{Cl:variational}.
\end{remark}
Behavior of solutions at the mass-energy threshold $\mathcal{ME}=1$ is completely classified in \cite{DR} in the case $N=3$, $p=3$, see Theorems 2 and 3 there.
\medskip

\underline{The case $s_c=1$} (the energy-critical NLS), or
\begin{equation}
 \label{E:p-crit}
p=\frac{4}{N-2}+1, \quad N \geq 3.
\end{equation}
In this case instead of $\mathcal{ME}$ we simply use the notation $\mathcal{E} = E[u]/E[W]$. In the case of $\mathcal{E} <1$ the behavior of solutions is also completely understood and is summarized in
\begin{theorem}
\label{T:KMD}
Let $s_c =1$ and $u(x,t)$ be a solution of \eqref{E:NLS} with $u_0\in \dot{H}^1(\cR^N)$. Assume $0<\mathcal{E} < 1$.
\begin{enumerate}
\item
If $\int |u_0|^{p+1} < \int |W|^{p+1}$ and $u$ is radial if $N=3,4$, then $u(t)$ exists globally and, in fact, scatters in $\dot{H}^1$ in both time directions.
\item
If $\int |u_0|^{p+1} > \int |W|^{p+1}$ and either $u_0$ is radial with $u_0 \in L^2$ or $x u_0 \in L^2$, then $u(t)$ blows-up in finite positive time and finite negative time.
\end{enumerate}
\end{theorem}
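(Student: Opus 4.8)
The plan is to treat the two parts by entirely different mechanisms: part~(1) by the concentration--compactness/rigidity method of Kenig--Merle, and part~(2) by a localized virial convexity argument. Both rest on a preliminary variational analysis that converts the hypothesis $\mathcal{E}<1$ together with the sign of $\int|u_0|^{p+1}-\int|W|^{p+1}$ into a quantitative, flow-invariant trapping. Since $W$ is the extremizer of the sharp Sobolev embedding $\|f\|_{L^{p+1}}\le C_N\|\nabla f\|_{L^2}$ and solves $\Delta W+W^{p}=0$, one has $\int|W|^{p+1}=\|\nabla W\|_{L^2}^2$ and $E[W]=\frac1N\|\nabla W\|_{L^2}^2$. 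Feeding the sharp inequality into $E[u]=\frac12\|\nabla u\|_{L^2}^2-\frac1{p+1}\|u\|_{L^{p+1}}^{p+1}$ shows that on the energy level $\mathcal{E}<1$ the conditions $\int|u_0|^{p+1}<\int|W|^{p+1}$ and $\|\nabla u_0\|_{L^2}<\|\nabla W\|_{L^2}$ are equivalent, that each of the regions $\{\|\nabla u\|_{L^2}<\|\nabla W\|_{L^2}\}$ and $\{\|\nabla u\|_{L^2}>\|\nabla W\|_{L^2}\}$ is preserved by the flow (the threshold sphere cannot be crossed while $E$ is conserved and strictly below $E[W]$), and that one obtains the coercive gap
\[
\|\nabla u(t)\|_{L^2}^2-\|u(t)\|_{L^{p+1}}^{p+1}\ \gtrsim\ \pm\,\delta\,\|\nabla u(t)\|_{L^2}^2,
\]
with the $+$ sign throughout the subthreshold region and the $-$ sign throughout the superthreshold region, for some $\delta=\delta(\mathcal{E})>0$. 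This is the variational content invoked in the remark after Theorem~\ref{T:DHR} (Claim~\ref{Cl:variational}).

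For part~(1) I would set up $\dot H^1$ local well-posedness and small-data scattering through Strichartz estimates, measuring scattering by finiteness of the critical space-time norm $\|u\|_{L^{2(N+2)/(N-2)}_{t,x}}$. Assuming toward a contradiction that scattering fails somewhere below $E[W]$, let $E_c<E[W]$ be the infimum of the energies of nonscattering subthreshold solutions. A linear profile decomposition of Bahouri--G\'erard/Keraani type, combined with the stability/perturbation theory for the critical NLS, then produces a critical element: a nonscattering solution at energy $E_c$ whose trajectory is precompact in $\dot H^1$ modulo the scaling symmetry, the translation symmetry being eliminated by the radial hypothesis when $N=3,4$. The rigidity step applies a truncated virial/Morawetz identity to this compact-orbit solution; the coercive gap above forces the truncated variance to be strictly monotone, which is incompatible with precompactness unless $u\equiv0$, contradicting $E_c<E[W]$. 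The radiality assumption for $N=3,4$ is precisely what makes the localized virial estimate and the control of the spatial translation parameter close in low dimension.

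Part~(2) is the comparatively soft half. For $xu_0\in L^2$ set $V(t)=\int|x|^2|u(x,t)|^2\,dx$; the virial identity gives, using $\frac{N(p-1)}{p+1}=2$ at $s_c=1$,
\[
V''(t)=8\big(\|\nabla u(t)\|_{L^2}^2-\|u(t)\|_{L^{p+1}}^{p+1}\big)\le -8\,\delta\,\|\nabla u(t)\|_{L^2}^2<0
\]
on the superthreshold region, so $V$ is strictly concave and nonnegative, which is impossible globally; hence finite-time blow-up in both time directions. For the radial $L^2$ case I would replace $|x|^2$ by a compactly supported radial weight equal to $|x|^2$ near the origin, as in Ogawa--Tsutsumi, and use radial Gagliardo--Nirenberg decay to absorb the boundary error terms into the negative main term; choosing the truncation radius large still yields a uniformly negative second derivative and the same conclusion.

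I expect the single genuine obstacle to be the scattering statement: the construction of the critical element and, above all, the rigidity theorem carry essentially all the difficulty, and it is there that the dimensional and radiality restrictions enter. Once the variational trapping is in place, the blow-up half follows by the direct convexity argument sketched above.
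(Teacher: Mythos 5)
Your proposal is correct in outline, and it is essentially the proof the paper intends: note first that the paper does not prove Theorem~\ref{T:KMD} itself, but quotes it from Kenig--Merle \cite{KM} (radial, $N=3,4,5$) and Killip--Visan \cite{KiVi} (nonradial, higher dimensions), with the $L^{p+1}$-norm formulation of the hypotheses reduced to the usual gradient formulation exactly as you do, via the variational equivalence of Claim~\ref{Cl:variational} under $\mathcal{E}\leq 1$. Your coercivity computation is sound in both regions (in the superthreshold region, writing $\int|\nabla u|^2-\int|u|^{p+1}=-\frac{2}{N-2}\int|\nabla u|^2+\frac{2N}{N-2}E[u]$ and using $\int|\nabla u|^2>\int|\nabla W|^2=NE[W]$ does give a gap proportional to $\int|\nabla u|^2$, as you assert), and the two virial arguments for part~(2), including the Ogawa--Tsutsumi-style truncation in the radial $L^2$ case, are the standard ones. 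Two points deserve tightening. First, rigidity in the nonradial case $N\geq 5$ is more than ``truncated virial versus precompactness'': one must first reduce to the soliton-like or cascade scenarios, prove negative regularity ($u_0\in L^2$, Theorem 6.1 of \cite{KiVi}), exclude the cascade separately, and use a Galilean transform to set the momentum to zero so that $x(t)=o(t)$ before the localized virial closes; your sketch elides the control of the modulation parameters $\lambda(t)$ and $x(t)$, which is where most of the work lives. Second, be aware that the paper's own Section~\ref{S:Scattering} (Theorem~\ref{T:crit_scatt1}) proves a genuinely \emph{stronger} scattering statement in which only $\sup_t\int|u(t)|^{\frac{2N}{N-2}}<\int|W|^{\frac{2N}{N-2}}$ is assumed along the flow, with no gradient trapping available; there the coercivity must come from the $L^{p+1}$ size alone (Claim~\ref{C:variational}), since the implication in \eqref{claim1} is one-sided without an energy bound. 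Your route---converting the $L^{p+1}$ hypothesis at $t=0$ into gradient trapping and invoking flow invariance---is perfectly adequate for Theorem~\ref{T:KMD} as stated, but would not yield that refinement.
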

The above results in both cases $0<s_c \leq 1$ use the concentration compactness - rigidity method,
first introduced in the energy-critical case by Kenig-Merle \cite{KM}, where they proved Theorem \ref{T:KMD} in dimensions $N=3,4,5$. The higher dimensions extensions and non-radial assumption are in \cite{KiVi}.

Behavior of radial solutions at the energy threshold $\mathcal{E}=1$ is classified in \cite{DM}, see Theorem 2 there.
\medskip

Above the mass-energy threshold, i.e., $\mathcal{ME} >1$, the question about the global behavior of solutions is mostly open. For the radial 3d cubic NLS ($s_c<1$), in \cite{NS-2012} Nakanishi and Schlag described the global dynamics of $H^1$ solutions slightly above the mass-energy threshold, $\mathcal{ME}<1+\epsilon$. Beceanu in \cite{B-CPAM} constructs a co-dimension 1 manifold invariant by the flow of $\dot{H}^{1/2}$ solutions close to $u_Q$. The only other result which also works above the threshold is the two blow up criteria in \cite{HPR} (for the 3d cubic NLS).

In this paper we investigate solutions above this threshold, in particular, we improve the results of Theorems \ref{T:DHR} and \ref{T:KMD} for the finite variance solutions, where for globally existing solutions we also show scattering. Note that we can now describe solutions which are not necessarily $\epsilon$-close to the threshold.

Before we state the main results of the paper, we define the variance as
\begin{equation}
\label{E:variance}
V(t) = \int |x|^2 |u(x,t)|^2 \, dx.
\end{equation}
Assuming $V(0)<\infty$ (referred to as finite variance), the following virial identities hold:
\begin{align}
\label{V'}
V_{t}(t) &= 4 \I \int x\cdot \nabla u(x,t) \, \overline{u}(x,t) \, dx, \quad \mbox{and}\\
\label{V''}
V_{tt}(t)& =8\int |\nabla u(t)|^2-\frac{4N(p-1)}{p+1}\int |u(t)|^{p+1} \quad \\
\label{V''2}
\qquad & \equiv 4N(p-1)\, E[u] - 4(p-1)s_c \, \|\nabla u(t)\|_{L^2(\cR^N)}^2.
\end{align}

We abbreviate $Q = Q_{p,N}$ from \eqref{E:Q-groundstate}.
\begin{theorem}
\label{T:BB}
Let $u$ be a solution of  \eqref{E:NLS}, where $p$ satisfies \eqref{E:p-subcrit} or \eqref{E:p-crit}.
Assume $V(0)<\infty$, $u_0 \in H^1(\cR^N)$, and
\begin{equation}
\label{cond_1}
\mathcal{ME}[u] \, \left(1 - \frac{(V_t(0))^2}{32 \,E[u] \,V(0)} \right)\leq 1.
\end{equation}
\begin{enumerate}
\item[\underline{\bf Part 1}](Blow up)
If
\begin{equation}
\label{cond_Bup_2}
M[u_0]^{1-s_c}\left(\int |u_0|^{p+1}\right)^{s_c}>M[Q]^{1-s_c}\left(\int |Q|^{p+1}\right)^{s_c}
\end{equation}
and
\begin{equation}
\label{cond_Bup_1}
V_t(0) \leq 0,
\end{equation}
then $u(t)$ blows-up in finite positive time, $T_+(u)<\infty$.

\item[\underline{\bf Part 2}] (Boundedness and scattering)
If
\begin{equation}
\label{cond_bounded_2}
M[u_0]^{1-s_c} \left(\int |u_0|^{p+1}\right)^{s_c}<M[Q]^{1-s_c}\left(\int |Q|^{p+1}\right)^{s_c}
\end{equation}
and
\begin{equation}
\label{cond_bounded_1}
V_t(0) \geq 0,
\end{equation}
then
\begin{equation}
\label{good_bound}
\limsup_{t \to T_+(u)} ~~ M[u_0]^{1-s_c} \left(\int |u(t)|^{p+1}\right)^{s_c}<M[Q]^{1-s_c}\left(\int |Q|^{p+1}\right)^{s_c},
\end{equation}
in particular, in the energy-subcritical case when $p<\frac{N+2}{N-2}$, we get $T_+=+\infty$.

Furthermore, if $s_c<1$, $u$ scatters forward in time in $H^1$; if $s_c=1$, $u$ scatters forward in time in $\dot{H}^1$ provided $N\geq 5$ or $u$ is radial.
\end{enumerate}
\end{theorem}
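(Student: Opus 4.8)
The proof rests on three ingredients that I will combine. First, I rewrite the virial identity \eqref{V''2} in scale-invariant form: introducing the renormalized gradient
\[
z(t) = \frac{M[u]^{\frac{1-s_c}{s_c}}\,\|\nabla u(t)\|_{L^2}^2}{M[Q]^{\frac{1-s_c}{s_c}}\,\|\nabla Q\|_{L^2}^2},
\]
and using the Pohozaev relations for $Q$ (which give $E[Q]=\frac{s_c}{N}\|\nabla Q\|_{L^2}^2$ when $s_c<1$, and the analogous identity for $W$), one checks that \eqref{V''2} becomes $V_{tt}(t)=C\,(\mathcal{ME}-z(t))$, with $C>0$ depending only on $Q$ and $M[u]$. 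Second, the uncertainty (Cauchy--Schwarz) inequality $(V_t)^2\le 16\,V\,\|\nabla u\|_{L^2}^2$, which I rewrite as $z\ge z_\ast$, where $z_\ast(t)$ is the same expression as $z$ with $\|\nabla u(t)\|_{L^2}^2$ replaced by $\frac{(V_t)^2}{16V}$; a direct computation gives $\frac{N}{2s_c}\,z_\ast=\mathcal{ME}\,\frac{(V_t)^2}{32EV}$, so that hypothesis \eqref{cond_1} reads exactly $\mathcal{ME}-\frac{N}{2s_c}z_\ast(0)\le 1$. Third, the sharp Gagliardo--Nirenberg inequality, saturated by $Q$, gives $\mathcal{ME}\ge f(z)$ with $f(z)=\frac{N}{2s_c}z-\frac{N-2s_c}{2s_c}z^{\frac{N(p-1)}{4}}$, a function with $f(1)=\max f=1$; this is the scale-invariant form of the variational Claim \ref{Cl:variational}.

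The engine of both parts is the weighted virial $\mathcal G(t)=V(t)^{2B}\big[(V_t)^2-32\,E\,V\big]$, with $B=\frac{(p-1)s_c}{4}$. Using the exact identity $V_{tt}=C(\mathcal{ME}-z)$ together with $(V_t)^2/V=\frac{C}{B}z_\ast$, one computes
\[
\mathcal G'(t)=2C\,V^{2B}\,V_t\,\big(z_\ast(t)-z(t)\big),
\]
so that $\mathcal G'$ is governed by the sign of $-V_t$ and by the strictly positive Gagliardo--Nirenberg deficit $z-z_\ast$. Note that $\{\mathcal G\ge 0\}$ is precisely the region $(V_t)^2\ge 32EV$, on which (if $E>0$ and $V_t<0$) the comparison $2(\sqrt V)'=V^{-1/2}V_t\le-\sqrt{32E}$ forces $V$ to reach $0$ in finite time, i.e. $T_+(u)<\infty$.

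For Part 1, hypothesis \eqref{cond_Bup_2} means $z(0)>1$, hypothesis \eqref{cond_Bup_1} gives $V_t(0)\le 0$, and \eqref{cond_1} controls $\mathcal G(0)$ from below. I first dispose of the easy case $E\le 0$: then $\mathcal{ME}\le 0<z$, so $V_{tt}<0$ and the classical convexity (Glassey) argument applies. When $E>0$, the plan is to show $V_t(t)<0$ persists, whence $\mathcal G$ is nondecreasing by the displayed formula; the strict deficit $z-z_\ast>0$, kept away from $0$ because \eqref{cond_Bup_2} forces $z(t)$ to remain above threshold, drives $\mathcal G$ up to $0$ in finite time, after which the extinction estimate above yields blow-up. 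Quantifying this deficit---propagating the strict above-threshold bound on $z$ and integrating $V^{2B}|V_t|(z-z_\ast)$ to close the gap between \eqref{cond_1} (which only gives $\frac{(V_t(0))^2}{32EV(0)}\ge 1-\frac{1}{\mathcal{ME}}$) and the blow-up region $\frac{(V_t)^2}{32EV}\ge 1$---is the main obstacle, since above the threshold the variational function $f$ no longer provides a barrier at $z=1$.

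Part 2 is the mirror image: \eqref{cond_bounded_2} gives $z(0)<1$, \eqref{cond_bounded_1} gives $V_t(0)\ge 0$, and the same analysis of $\mathcal G$ (now nonincreasing while $V_t\ge 0$) keeps the solution on the sub-threshold side, yielding the uniform bound \eqref{good_bound}; since below threshold the conserved energy coercively controls $\|\nabla u(t)\|_{L^2}$, this gives $T_+=+\infty$ in the energy-subcritical range. It remains to upgrade this uniform sub-threshold bound to scattering, and here I would invoke the concentration-compactness/rigidity scheme underlying Theorems \ref{T:DHR} and \ref{T:KMD}: a non-scattering global forward solution would produce a critical element that is a global solution with uniformly sub-threshold gradient and precompact trajectory modulo symmetries, which the localized-virial rigidity argument excludes. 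The point---and the main subtlety of this part---is that this scheme uses only the uniform sub-threshold bound \eqref{good_bound} along the flow, not the global constraint $\mathcal{ME}<1$, so it applies even though here $\mathcal{ME}>1$; the restriction to $N\ge 5$ or radial data when $s_c=1$ is exactly the range in which the requisite energy-critical scattering theory (Kenig--Merle, Killip--Vi\c{s}an) is available.
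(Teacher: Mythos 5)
Your algebra is sound as far as it goes: I checked that with your normalizations $2C\,\mathcal{ME}=32E(2B+1)$ and $(V_t)^2/V=\frac{C}{B}z_\ast$, so the identity $\mathcal G'(t)=2C\,V^{2B}V_t(z_\ast-z)$ is correct, as is the translation of \eqref{cond_1} into $\mathcal{ME}-\frac{N}{2s_c}z_\ast(0)\le 1$, and the endgame on the region $\{(V_t)^2\ge 32EV,\ V_t<0\}$ is fine. But the step you yourself flag as ``the main obstacle'' is not a technicality to be quantified later --- it is the entire content of the theorem, and it cannot be closed with the tools you allow yourself. Because you use only the plain Cauchy--Schwarz inequality $(V_t)^2\le 16V\|\nabla u\|_{L^2}^2$, neither $\mathcal G$ nor the deficit $z-z_\ast$ ever sees the $L^{p+1}$ hypotheses \eqref{cond_Bup_2} and \eqref{cond_bounded_2}: Cauchy--Schwarz can be arbitrarily close to saturated, so $z-z_\ast$ admits no lower bound in terms of those conditions, and above the mass--energy threshold there is no variational barrier keeping $z(t)$ above $1$ (as you note, $f$ no longer separates the regimes; moreover \eqref{cond_Bup_2} constrains $\int|u_0|^{p+1}$, not the gradient, and for $\mathcal{ME}>1$ the two are not interchangeable --- this is exactly the one-sided implication in \eqref{claim1}). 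You also never show $V_t<0$ persists while $\mathcal G<0$; if $V_t$ vanishes, $\mathcal G$ stops increasing and the scheme stalls. In Part 2 the defect is worse: \eqref{good_bound} is a \emph{strict, uniform-in-time} bound on the renormalized $L^{p+1}$ norm, and monotonicity of $\mathcal G$ by itself yields no bound whatsoever on $\int|u(t)|^{p+1}$.

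The missing idea is the refinement of Cauchy--Schwarz in Lemma \ref{L:ValeriaGN} (apply the sharp Gagliardo--Nirenberg inequality to $e^{i\lambda|x|^2}u$ and use negativity of the discriminant in $\lambda$), which subtracts the renormalized potential term from $\|\nabla u\|^2$ in your bound $z_\ast\le z$. In the paper's variables $z=\sqrt{V}$ this gives \eqref{bound_z'}, $(z_t)^2\le 4\varphi(V_{tt})$, with the \emph{nonlinear} function $\varphi$ of \eqref{defphi}, whose minimizer $\sigma_m$ satisfies \eqref{good_sm} and \eqref{phi_sm}; this is precisely how all three hypotheses become dynamical: $\mathcal{ME}>1$ iff $\sigma_m\ge 0$, \eqref{cond_1} iff $z_t(0)^2\ge 4\varphi(\sigma_m)=\sigma_m/2$, and \eqref{cond_Bup_2}, \eqref{cond_bounded_2} translate via \eqref{norms_u} into $V_{tt}(0)<\sigma_m$, respectively $V_{tt}(0)>\sigma_m$. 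The bootstrap then closes: in Part 1, $z_t\le-\sqrt{4\varphi(\sigma_m)}$ combined with $(z_t)^2\le4\varphi(V_{tt})$ propagates $V_{tt}(t)<\sigma_m$, whence $z_{tt}=\frac1z\big(\frac{V_{tt}}2-(z_t)^2\big)<0$ and concavity of $\sqrt V$ forces $T_+<\infty$; in Part 2, a quantitative version (Taylor expansion of $\varphi$ at $\sigma_m$, leading to \eqref{eq:7}) gives $V_{tt}(t)\ge\sigma_m+\sqrt{\eps_0}/D$ for all $t$, which via \eqref{norms_u} is exactly the strict bound \eqref{good_bound}. That this refinement is indispensable is visible on the borderline data $e^{i\gamma|x|^2}Q$: there the refined inequality is an \emph{equality} (Remark \ref{R:equal_ValeriaGN}), i.e., your plain deficit $z-z_\ast$ equals exactly the potential term that Lemma \ref{L:ValeriaGN} tracks and your scheme discards. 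Your scattering outline, by contrast, is accurate in structure: it matches Section \ref{S:Scattering} (Theorems \ref{T:crit_scatt1} and \ref{T:scatt_sub1}), where the new ingredient is the coercivity of Claims \ref{C:variational} and \ref{C:variational_sub} under an $L^{p+1}$ (rather than gradient) sub-threshold bound, needed precisely because smallness of the $L^{p+1}$ norm does not by itself give small-data theory.
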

\begin{remark}
\label{R:right-hand}
If $\mathcal{ME} < 1$, the conclusion of Theorem \ref{T:BB} follows from Theorems \ref{T:DHR} (if $s_c<1$) and \ref{T:KMD} ($s_c=1$). Theorem \ref{T:BB} is new only in the case when $\mathcal{ME} \geq  1$. \end{remark}
\begin{remark}
Let $\Sigma=\{f\in H^1,\; \int |x|^2f<\infty\}$.
The proof of Theorem 1.4 shows that the two subsets of $\Sigma$: $\Sigma_{Bup}$ defined by the conditions \eqref{cond_1}, \eqref{cond_Bup_2} and \eqref{cond_Bup_1}, and $\Sigma_{sc}$ defined by the conditions \eqref{cond_1}, \eqref{cond_bounded_2}, \eqref{cond_bounded_1} are stable by the forward flow of \eqref{E:NLS}. These two sets contain solutions with zero momentum and arbitrary large mass and energy (see Remark \ref{R:infinite_energy} below).
\end{remark}

\begin{remark}
We prove in Section \ref{S:Scattering} that any solution of \eqref{E:NLS} with property \eqref{good_bound} scatters for positive time (see Theorems \ref{T:crit_scatt1} and \ref{T:scatt_sub1}).
Note that if the $L^{p+1}$ norm is replaced by the gradient norm, the result is known, for example see \cite[Cor 5.16]{KM} in the energy-critical case. Our assumption \eqref{good_bound} is weaker, due to the one side implication in \eqref{claim1}, thus, Theorems \ref{T:crit_scatt1} and \ref{T:scatt_sub1} improve known results.
\end{remark}
\begin{remark}
The statement of Theorem \ref{T:BB} is not symmetric in time as the statements in Theorems \ref{T:DHR} and \ref{T:KMD}.
\end{remark}
\begin{remark}
The scattering statement (Part 2) of Theorem \ref{T:BB} is optimal in the following sense: if $u_0\in H^1$ has finite variance, and $u$ scatters forward in time, then there exists $t_0$ such that \eqref{cond_1}, \eqref{cond_bounded_2} and \eqref{cond_bounded_1} are satisfied by $u(t)$, $V(t)$ and $V_t(t)$ for all $t\geq t_0$. Indeed, if $u(t)$ scatters forward in times, then $E[u]>0$,
$$
 \|u(t)\|_{L^{p+1}}\rightarrow 0 \quad V(t)\sim 8E[u]t^2,\quad V_t(t)\sim 16 E[u]t, \quad \text{as }t\to+\infty, $$
which proves these three conditions.
\end{remark}

As a consequence of Theorem \ref{T:BB}, we obtain the behavior of solutions that are obtained by multiplying a finite-variance solutions with $\mathcal{ME}\leq 1$ by $e^{i\gamma |x|^2}$, $\gamma\in \cR$:
\begin{corollary}
\label{C:KeMe}
Let $\gamma\in \cR\setminus \{0\}$, $v_0\in H^1$ with finite variance be such that $\mathcal{ME}[v_0] \leq 1$, and $u^{\gamma}$ be the solution of \eqref{E:NLS} with initial data
$$
u_0^{\gamma}=e^{i\gamma |x|^2}v_0.
$$

If $M[u_0^{\gamma}]^{1-s_c} \left(\int |u_0^{\gamma}|^{p+1}\right)^{s_c} > M[Q]^{1-s_c}\left(\int |Q|^{p+1}\right)^{s_c}$,
then $\forall ~ \gamma<0$, $T_+(u^{\gamma})<\infty$.

If $M[u_0^{\gamma}]^{1-s_c} \left(\int |u_0^{\gamma}|^{p+1}\right)^{s_c} < M[Q]^{1-s_c}\left(\int |Q|^{p+1}\right)^{s_c}$, then for all $\gamma>0$, $u^{\gamma}$ satisfies \eqref{good_bound}. Furthermore, if $s_c<1$, $u^{\gamma}$ scatters forward in time in $H^1$. If $s_c=1$, $u^{\gamma}$ scatters forward in time in $\dot{H}^1$ provided $N\geq 5$, or $u$ is radial.
\end{corollary}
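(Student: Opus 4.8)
The plan is to deduce the corollary from Theorems \ref{T:DHR}, \ref{T:KMD} and \ref{T:BB} by tracking how the conserved and virial quantities transform under the modulation $v_0\mapsto u_0^\gamma=e^{i\gamma|x|^2}v_0$. Since $|u_0^\gamma|=|v_0|$, the mass $M$, the potential energy $\int|u_0^\gamma|^{p+1}$, and the variance $V(0)=\int|x|^2|v_0|^2$ are all independent of $\gamma$; in particular the threshold comparison in \eqref{cond_Bup_2}/\eqref{cond_bounded_2} is inherited unchanged from $v_0$. Writing $\nabla u_0^\gamma=e^{i\gamma|x|^2}(\nabla v_0+2i\gamma x\,v_0)$ and substituting into \eqref{V'} and the energy, I would record
\[
V_t^{u^\gamma}(0)=V_t^{v_0}(0)+8\gamma V(0),\qquad E[u_0^\gamma]=E[v_0]+\tfrac{\gamma}{2}V_t^{v_0}(0)+2\gamma^2V(0),
\]
which also shows $u_0^\gamma\in H^1$ with finite variance.

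Two algebraic facts then organize everything. First, because the mass is $\gamma$-independent, $\mathcal{ME}[u^\gamma]=c\,E[u^\gamma]$ with $c=M[v_0]^{\frac{1-s_c}{s_c}}/(M[Q]^{\frac{1-s_c}{s_c}}E[Q])>0$ (for $s_c=1$, read $\mathcal{E}[u^\gamma]=E[u^\gamma]/E[W]$), so the left-hand side of \eqref{cond_1} equals $c\big(E[u^\gamma]-(V_t^{u^\gamma}(0))^2/(32V(0))\big)$. A short computation with the two displayed formulas shows the bracket is \emph{independent of} $\gamma$: the $\gamma$- and $\gamma^2$-terms cancel, leaving $E[v_0]-(V_t^{v_0}(0))^2/(32V(0))$. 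Hence the left side of \eqref{cond_1} equals its value at $\gamma=0$, namely $\mathcal{ME}[v_0]-c\,(V_t^{v_0}(0))^2/(32V(0))\le\mathcal{ME}[v_0]\le 1$, so \eqref{cond_1} holds for \emph{every} $\gamma$. Second, differentiating $E[u^\gamma]$ yields the monotonicity identity
\[
\frac{d}{d\gamma}\mathcal{ME}[u^\gamma]=\frac{c}{2}\,V_t^{u^\gamma}(0),
\]
so $\gamma\mapsto\mathcal{ME}[u^\gamma]$ is an upward parabola whose vertex is exactly where $V_t^{u^\gamma}(0)=0$.

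With these in hand, fix $\gamma<0$ and assume \eqref{cond_Bup_2}. If $\mathcal{ME}[u^\gamma]<1$, the finite-variance datum $u_0^\gamma$ falls under the sub-threshold theory: Theorem \ref{T:DHR} ($s_c<1$) or Theorem \ref{T:KMD} ($s_c=1$) when $E[u^\gamma]>0$, and the classical concavity argument (using $V_{tt}\le 4N(p-1)E[u^\gamma]\le 0$ from \eqref{V''2}) when $E[u^\gamma]\le 0$; either way $T_+(u^\gamma)<\infty$. If instead $\mathcal{ME}[u^\gamma]\ge 1$, I claim the monotonicity identity forces $V_t^{u^\gamma}(0)\le 0$: were $V_t^{u^\gamma}(0)>0$, then since $\gamma'\mapsto V_t^{u^{\gamma'}}(0)$ is increasing we would have $V_t^{u^{\gamma'}}(0)>0$ for all $\gamma'\in[\gamma,0]$, whence $\mathcal{ME}[u^{\gamma'}]$ is strictly increasing there and $\mathcal{ME}[u^\gamma]<\mathcal{ME}[v_0]\le 1$, a contradiction. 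Thus \eqref{cond_1}, \eqref{cond_Bup_2} and \eqref{cond_Bup_1} all hold and Part 1 of Theorem \ref{T:BB} gives blow up. The scattering statement is the mirror image: for $\gamma>0$ under \eqref{cond_bounded_2}, either $\mathcal{ME}[u^\gamma]<1$ (Theorem \ref{T:DHR}/\ref{T:KMD}, with the radiality caveat for $s_c=1$, $N=3,4$), or $\mathcal{ME}[u^\gamma]\ge 1$ and the same argument now forces $V_t^{u^\gamma}(0)\ge 0$, so Part 2 of Theorem \ref{T:BB} applies.

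The step I expect to require the most care is precisely reconciling the sign requirement on $V_t^{u^\gamma}(0)$ with the blanket ranges ``all $\gamma<0$'' and ``all $\gamma>0$.'' A naive invocation of Theorem \ref{T:BB} only reaches the $\gamma$ on the far side of the vertex $\gamma_*=-V_t^{v_0}(0)/(8V(0))$, and when $V_t^{v_0}(0)>0$ this vertex is negative, leaving the interval $(\gamma_*,0)$ uncovered by Part 1. The resolution, and the conceptual heart of the corollary, is that on exactly that interval the monotonicity identity has pushed $\mathcal{ME}[u^\gamma]$ strictly below $1$, so those data are instead handled by the sub-threshold Theorems \ref{T:DHR}/\ref{T:KMD}. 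Verifying the $\gamma$-invariance of the bracket in \eqref{cond_1} and this parabola/vertex bookkeeping are the only genuinely computational points; the energy-sign boundary cases ($E[u^\gamma]\le 0$, or $\mathcal{ME}[u^\gamma]$ equal to $1$) are routine.
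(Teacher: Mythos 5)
Your proposal is correct and follows essentially the same route as the paper: the same case split at $\mathcal{ME}[u^{\gamma}]=1$ (sub-threshold theory when $\mathcal{ME}[u^\gamma]\le 1$, Theorem \ref{T:BB} when $\mathcal{ME}[u^\gamma]\geq 1$), the same phase-modulation identities \eqref{E_u0}--\eqref{mom_u0}, and the same $\gamma$-invariance of the quantity in \eqref{cond_1} (the paper's \eqref{E:the_same}). The only difference is cosmetic: where you obtain the sign of $V_t^{u^{\gamma}}(0)$ from the monotonicity identity $\frac{d}{d\gamma}\mathcal{ME}[u^{\gamma}]=\frac{c}{2}V_t^{u^{\gamma}}(0)$ and the vertex of the parabola, the paper extracts it from the unique positive root $\gamma_c^+$ of the same quadratic in $\gamma$ --- two equivalent bookkeepings of one computation.
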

\begin{remark}
\label{R:infinite_energy}
The above corollary implies that we can predict the behavior of some solutions with arbitrary large energy: for example, if $v_0$ is such that $\mathcal{ME}[v_0] \leq 1$ and $\gamma > 0$ is large, then
$$
E[u_0^\gamma]=E[v_0] + 4\gamma^2 \|x v_0\|^2_{L^2} + 4\gamma \I \int x\cdot \nabla v_0 \bar{v}_0,
$$
and $E[u_0^\gamma] \nearrow \infty$ as $\gamma \to \pm\infty$.
Note that we can have $P[u_0^\gamma]=0$ for all $\gamma$ (this is the case for example if $v_0$ is radial): in particular, our results cannot be obtained from Theorem \ref{T:BB} by Galilean invariance as for example in \cite[Theorem 4]{DR}.
 \end{remark}

The second part of Corollary \ref{C:KeMe} is in accordance with the observation, made in \cite{CW}, that if $v_0\in H^1$ has finite variance, then the solution of \eqref{E:NLS} with initial data $e^{i\gamma|x|^2}v_0$ scatters forward in time for large, positive $\gamma$.
Let us also mention that in the mass-critical case $s_c=0$, the solution with initial data $e^{i\gamma |x|^2}v_0$ can be obtained explicitly, by the pseudo-conformal transformation from the solution with initial data $e^{i\gamma |x|^2}$. This transformation is not available if $s_c\neq 0$.

Another consequence of Theorem \ref{T:BB} is that we now understand the behavior of the ground state modulated by a quadratic phase in both time directions (which is important in studying blow up solutions, for example, see \cite{HPR-AMS}).
\begin{corollary}
\label{C:ground_states}
\underline{Subcritical case:}
Let $p$ be as in \eqref{E:p-subcrit} (i.e., $0 < s_c < 1$).
Let $\gamma \in \cR$ and $Q^{\gamma}$ be the solution of \eqref{E:NLS} with initial data
$$
Q^{\gamma}_0=e^{i\gamma|x|^2} \, Q(x) ,
$$
where $Q = Q_{p,N}$ is as in \eqref{E:Q-groundstate}.

If $\gamma>0$, then $Q^{\gamma}$ is globally defined, bounded and scatters forward in time and blows up backward in time. If $\gamma<0$, then $Q^{\gamma}$ blows up forward in time and is globally defined,  bounded and scatters backward in time.
\smallskip

\underline{Critical case:}
Let $p$ be as in \eqref{E:p-crit} (i.e., $s_c = 1$) with $N\geq 7$. Let $W^{\gamma}$ be the solution of \eqref{E:NLS} with initial data
$$
W^{\gamma}_0=e^{i\gamma|x|^2} W(x),
$$
where $W = Q_{p,N}$ as in \eqref{E:Q-groundstate} for $p, N$ such that $s_c=1$.

If $\gamma>0$, then $W^{\gamma}$ is globally defined, bounded and scatters forward in time and blows up backward in time. If $\gamma<0$, then $W^{\gamma}$ blows up forward in time and is globally defined, bounded and scatters backward in time.
\end{corollary}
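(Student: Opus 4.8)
The plan is to deduce all four assertions from Theorem~\ref{T:BB} applied to the modulated data $u_0=e^{i\gamma|x|^2}Q$, after reducing to two cases and overcoming the fact that $Q^\gamma$ sits \emph{exactly} on the thresholds \eqref{cond_Bup_2}--\eqref{cond_bounded_2}. First I would reduce via the time-reversal symmetry $u(x,t)\mapsto\overline{u(x,-t)}$ of \eqref{E:NLS}, which sends $Q_0^\gamma$ to $Q_0^{-\gamma}$. Hence it suffices to show that for $\gamma<0$ the solution blows up in finite forward time, and that for $\gamma>0$ it is global, bounded and forward-scattering; the backward statements then follow. In the critical case the hypothesis $N\ge 7$ is exactly what guarantees $W\in L^2$ and $xW\in L^2$, so that $u_0\in H^1$ with $V(0)=\int|x|^2W^2<\infty$; in the subcritical case this is automatic from the exponential decay of $Q$.

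Next I would compute the relevant quantities. Since $|u_0|=|Q|$, the mass and the $L^{p+1}$ norm are preserved, so $M[u_0]^{1-s_c}(\int|u_0|^{p+1})^{s_c}=M[Q]^{1-s_c}(\int|Q|^{p+1})^{s_c}$, i.e.\ $u_0$ lies exactly on the threshold in \eqref{cond_Bup_2}--\eqref{cond_bounded_2}. Differentiating $e^{i\gamma|x|^2}Q$ gives $\|\nabla u_0\|_{L^2}^2=\|\nabla Q\|_{L^2}^2+4\gamma^2V(0)$ (the cross term vanishes because $Q$ is real), hence $E[u_0]=E[Q]+2\gamma^2V(0)$, while \eqref{V'} gives $V_t(0)=8\gamma V(0)$. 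The Pohozaev relations for \eqref{E:Q-groundstate} yield $E[Q]=\tfrac{s_c}{N}\|\nabla Q\|_{L^2}^2>0$; together with $M[u_0]=M[Q]$ and \eqref{E:ME} this gives $\mathcal{ME}[u_0]=E[u_0]/E[Q]$ and, after substitution, the identity
\[ \mathcal{ME}[u_0]\Big(1-\frac{(V_t(0))^2}{32\,E[u_0]\,V(0)}\Big)=1, \]
so \eqref{cond_1} holds with \emph{equality}, while $V_t(0)$ has the sign of $\gamma$, matching \eqref{cond_Bup_1} for $\gamma<0$ and \eqref{cond_bounded_1} for $\gamma>0$.

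The main obstacle is precisely this borderline: Theorem~\ref{T:BB} requires the \emph{strict} inequalities \eqref{cond_Bup_2} or \eqref{cond_bounded_2}, whereas $Q^\gamma$ gives equality; moreover a short computation from \eqref{V''2} shows $V_{tt}(0)=32\gamma^2V(0)>0$ for both signs of $\gamma$, so the naive virial does not even detect the dichotomy. I would resolve this by applying Theorem~\ref{T:BB} not at $t=0$ but at a small time $t_0>0$, where every inequality has opened up strictly. The key computation is
\[ \frac{d}{dt}\Big(\int|u(t)|^{p+1}\Big)\Big|_{t=0}=-2\gamma N(p-1)\int|Q|^{p+1}, \]
which is positive for $\gamma<0$ and negative for $\gamma>0$; thus for small $t_0>0$ the $L^{p+1}$ quantity lies strictly above the threshold when $\gamma<0$ and strictly below it when $\gamma>0$, giving \eqref{cond_Bup_2} (resp.\ \eqref{cond_bounded_2}) at $t_0$. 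By continuity $V_t(t_0)$ keeps the strict sign of $V_t(0)$, securing \eqref{cond_Bup_1} (resp.\ \eqref{cond_bounded_1}).

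It remains to check \eqref{cond_1} at $t_0$, which is the delicate point because $\mathcal{ME}$ and $E$ are conserved and $h(t):=V_t(t)^2/V(t)$ satisfies $h'(0)=0$, so the condition is stationary to first order. Here I would go to second order: using \eqref{V''2} and the derivative above one finds $V_{ttt}(0)=\tfrac{16\gamma s_cN(p-1)^2}{p+1}\int|Q|^{p+1}$, and then, after cancellation of the quartic-in-$\gamma$ terms, $h''(0)=16\gamma\,V_{ttt}(0)>0$. Hence $h(t_0)>h(0)$ and \eqref{cond_1} becomes strict at $t_0$. Applying Theorem~\ref{T:BB} with initial time $t_0$ (legitimate by time-translation invariance, finite variance persisting on the existence interval) then yields finite-time blow-up for $\gamma<0$, and \eqref{good_bound}---hence global existence, boundedness and forward scattering---for $\gamma>0$. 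The time-reversal reduction of the first step upgrades these to all four assertions of the corollary, in both the subcritical and the critical ($N\ge 7$) regimes.
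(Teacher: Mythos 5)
Your proposal is correct and takes essentially the same route as the paper's proof: time-reversal reduction, the exact threshold identities at $t=0$, time-translation to a small $t_0>0$ where $\frac{d}{dt}\int|Q^{\gamma}|^{p+1}\big|_{t=0}=-2\gamma N(p-1)\int Q^{p+1}$ opens \eqref{cond_Bup_2}/\eqref{cond_bounded_2} strictly, and a second-order expansion showing \eqref{cond_1} becomes strict at $t_0$. Your direct verification that $h(t)=V_t^2/V$ satisfies $h'(0)=0$ and $h''(0)=16\gamma V_{ttt}(0)>0$ (with $V_t(0)=8\gamma V(0)$, $V_{tt}(0)=32\gamma^2V(0)$ and the quartic terms cancelling) is just an unpacked version of the paper's argument via $F(t)$ and $z=\sqrt{V}$, where the equality case of Lemma \ref{L:ValeriaGN} (Remark \ref{R:equal_ValeriaGN}) gives $z_{tt}(0)=0$ — the same fact, since $h=4z_t^2$.
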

\begin{remark}
In the case $p=3$, $N=3$, Nakanishi and Schlag has proved in \cite{NS-2012} the existence of an open subset of initial data such that the corresponding solutions scatters forward in time and blows up in finite negative time. Corollary \ref{C:ground_states} gives an explicit family of examples of such solutions for all mass-supercritical energy-subcritical nonlinearities. See also discussion after Conjecture 1 in \cite{HPR}, where such solutions (not necessarily close to $Q$) were exhibited.
\end{remark}

Another consequence of Theorem \ref{T:BB} is the behavior of the initial data with $V_t(0)=0$ (e.g., real-valued data) at the threshold $\mathcal{ME}=1$.
\begin{corollary}
\label{T:BB-realvalued}
Let $u(x,t)$ be a solution of \eqref{E:NLS}, $0 < s_c \leq 1$, with $V(0) < \infty$,
$V_t(0)=0$ and $u_0 \in H^1(\cR^N)$.
Assume $\mathcal{ME}=1$.
\begin{enumerate}
\item
If $M[u_0]^{1-s_c} \left(\int |u_0|^{p+1}\right)^{s_c} < M[Q]^{1-s_c}\left(\int |Q|^{p+1}\right)^{s_c}$,
then the solution $u(t)$ is bounded in $H^1$ ($\dot{H}^1$ if $s_c=1$). Moreover if  $s_c<1$, then $u$ is global and scatters in $H^1$ in both time directions; if $s_c=1$, then $u$ is global and scatters in $\dot{H}^1$ in both time directions, provided $u$ is radial in dimensions $N=3,4$.
\item
If $M[u_0]^{1-s_c} \left(\int |u_0|^{p+1}\right)^{s_c} > M[Q]^{1-s_c}\left(\int |Q|^{p+1}\right)^{s_c}$, then $u(t)$ blows-up in both time directions.
\end{enumerate}
\end{corollary}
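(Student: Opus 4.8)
The plan is to derive the corollary directly from Theorem \ref{T:BB} by exploiting the hypothesis $V_t(0)=0$, which sits exactly on the boundary between the two sign conditions \eqref{cond_Bup_1} and \eqref{cond_bounded_1}, together with the time-reversal symmetry of \eqref{E:NLS}. First I would record the elementary observation that, since $V_t(0)=0$ and $\mathcal{ME}=1$ (read as $\mathcal{E}=1$ when $s_c=1$), the left-hand side of \eqref{cond_1} equals
\[
\mathcal{ME}[u]\left(1-\frac{(V_t(0))^2}{32\,E[u]\,V(0)}\right)=\mathcal{ME}[u]=1,
\]
so \eqref{cond_1} holds, with equality. Here $V(0)>0$ because $u_0\not\equiv 0$, and $E[u]>0$ because $\mathcal{ME}=1$ forces $E[u]=M[Q]^{(1-s_c)/s_c}E[Q]/M[u]^{(1-s_c)/s_c}>0$ (resp.\ $E[u]=E[W]>0$ when $s_c=1$). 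Moreover $V_t(0)=0$ satisfies \emph{simultaneously} $V_t(0)\le 0$ and $V_t(0)\ge 0$, so both hypothesis packages of Theorem \ref{T:BB} are available, the choice depending only on the sign of the $L^{p+1}$ quantity relative to the threshold.

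Next I would treat the forward-in-time direction by a direct appeal to Theorem \ref{T:BB}. In case (1) the subthreshold condition \eqref{cond_bounded_2} holds and $V_t(0)=0$ gives \eqref{cond_bounded_1}; Part 2 of Theorem \ref{T:BB} then yields \eqref{good_bound}, global existence forward in time, and forward scattering in $H^1$ (if $s_c<1$) or in $\dot H^1$ (if $s_c=1$, under $N\ge 5$ or radiality, which is exactly the stated restriction for $N=3,4$). Boundedness in $H^1$ (resp.\ $\dot H^1$) follows, even in the non-scattering critical cases $N=3,4$, from \eqref{good_bound} and conservation of energy via the variational/coercivity estimate of Claim \ref{Cl:variational}: the a priori control of $\|u(t)\|_{L^{p+1}}$ below the threshold converts $E[u]$ into a uniform bound on $\|\nabla u(t)\|_{L^2}$. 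In case (2) the superthreshold condition \eqref{cond_Bup_2} holds and $V_t(0)=0$ gives \eqref{cond_Bup_1}, so Part 1 of Theorem \ref{T:BB} yields $T_+(u)<\infty$.

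Finally I would obtain the backward-in-time conclusions by applying the forward-in-time results to the time-reversed solution $v(x,t)=\overline{u(x,-t)}$, which again solves \eqref{E:NLS}. One checks that $v$ has the same mass, energy and $L^{p+1}$ norm as $u$, hence the same $\mathcal{ME}$ and the same position relative to the threshold; moreover $V_v(0)=V(0)<\infty$ and $(V_v)_t(0)=-V_t(0)=0$, so $v$ satisfies all hypotheses of the corollary. Applying case (1) (resp.\ case (2)) forward in time to $v$ then gives, for $u$, boundedness and backward scattering (resp.\ backward blow-up). For the scattering assertion one uses that complex conjugation maps free Schr\"odinger solutions to free Schr\"odinger solutions (since $\overline{e^{it\Delta}\phi}=e^{-it\Delta}\overline{\phi}$), so forward scattering of $v$ translates into backward scattering of $u$ in the same space. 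Since Theorem \ref{T:BB} is genuinely asymmetric in time, this reversal step is the only place requiring care; the sole subtlety is that we work at the exact boundary $\mathcal{ME}=1$ with equality in \eqref{cond_1}, but this borderline case is already included in Theorem \ref{T:BB}, so no separate analysis is needed.
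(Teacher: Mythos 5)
Your proposal is correct and follows essentially the route the paper intends: the corollary is derived directly from Theorem \ref{T:BB}, noting that $V_t(0)=0$ makes \eqref{cond_1} hold with equality (since $\mathcal{ME}=1$) and satisfies both sign conditions \eqref{cond_Bup_1} and \eqref{cond_bounded_1}, with the backward-in-time conclusions obtained via the symmetry $u(x,t)\mapsto \overline{u}(x,-t)$ — the same trick the paper uses in the proof of Corollary \ref{C:ground_states}. Your additional observations (positivity of $E[u]$ and $V(0)$, deducing the $\dot H^1$ bound in the nonradial critical cases $N=3,4$ from \eqref{good_bound} via Claim \ref{Cl:variational}, and conjugation intertwining $e^{it\Delta}$ with $e^{-it\Delta}$ for the backward scattering) correctly fill in the details the paper leaves implicit.
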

Note that this result is a consequence of the classification of the solutions at the threshold in the energy-critical case \cite{DM} and in the 3d cubic case \cite{DR}.

\subsection{Blow up criteria in the mass-supercritical case}
We next consider any mass-supercritical NLS ($s_c>0$), including the energy-supercritical case:
\begin{equation}
 \label{E:p-supercrit}
p > \frac{4}{N-2}+1, \quad N \geq 3.
\end{equation}
There is not much known in this case. For the focusing NLS one has small data theory in the critical Sobolev space for global-in-time solutions and negative energy finite variance criteria for blow up in finite time solutions. In the defocusing case (when the sign in front of the nonlinearity is changed to minus), in \cite{KiVi2} it is shown that the {\it a priori} boundedness of solutions in the critical Sobolev norm implies scattering in high dimensions ($N \geq 5$), with additional technical assumptions on $p$, and numerical simulations in \cite{CSS} confirm boundedness of the corresponding invariant Sobolev norm ($H^2$ in that case) for the 5d quintic NLS equation ($s_c=2$). The motivation for these papers came from similar results in the energy-subcritical case (see \cite{KM3}) as well as results in the energy-supercritical regime for the nonlinear wave equation, initiated in \cite{KM2}  (see also \cite{DKM} and references therein). We refer to \cite{DS} for the description of a stable blow-up in this context.

The classical blow up criterion of Vlasov-Petrishev-Talanov \cite{VPT70}, Zakharov \cite{Z72}, Glassey \cite{G77} use the convexity argument on the variance $V(t)$ to show that finite variance, negative energy solutions break down in finite time. In \cite{Lu95}, the second time derivative of the variance is used as well, however, it is expressed in a dynamic way, which with a classical mechanics approach gives a more refined blow-up criterion. In \cite{HPR} that and another criteria were shown for the 3d cubic NLS equation; in particular, it was shown that there is an open set of blow up solutions above the mass-energy threshold $\mathcal{ME} > 1$. We extend this argument to any focusing mass-supercritical NLS equation in all dimensions and show that these conditions indeed produce new blow up solutions; for example, in the energy-critical case see \S \ref{S:energy-critical} and Figures \ref{F:s=1-N=3} and \ref{F:s=1-N=4}, and in the energy-supercritical case refer to \S \ref{S:energy-supercritical} and
Figure \ref{F:s>1-N=4}.

If $s_c>1$, equation \eqref{E:NLS} is not well-posed in $H^1$. To prove local well-posedness in the critical Sobolev space $\dot{H}^{s_c}$, one needs the nonlinearity to be at least $C^{s_c}$, i.e.
\begin{equation}
 \label{technical}
p\text{ is an odd integer or }N\leq 7\text{ or } p>\frac{N+2+\sqrt{N^2-4N-28}}{4}
\end{equation}
(note that the condition ``$N\leq 7\text{ or } p>\frac{N+2+\sqrt{N^2-4N-28}}{4}$'' is equivalent to $p>s_c$).
We abbreviate $M = M[u]$ and $E = E[u]$, and state the following two criteria:

\begin{theorem}
\label{T:1}
Suppose that $u_0\in H^1$ and $V(0)<\infty$. If $s_c>1$, assume furthermore \eqref{technical} and $u_0\in \dot{H}^{s_c}$.
The following is a sufficient condition for blow-up in finite time for \eqref{E:NLS} with $s_c>0$ and $E[u] >0$:
\begin{equation}
 \label{E:L}
\frac{V_t(0)}{M} < \sqrt{8 N s_c} ~ g \left(\frac{4}{N s_c}\, \frac{E V(0)}{M^2} \right),
\end{equation}
where
\begin{equation}
\label{E:g-def}
g(x) =
\begin{cases}
\sqrt{ \frac{1}{k x^k} + x - \left(1+\frac1{k}\right)} & \text{if }0<x\leq 1 \\
-\sqrt{\frac{1}{k x^k} + x - \left(1+\frac1{k}\right)} & \text{if }x \geq 1
\end{cases}
\quad \text{with} \quad k = \frac{(p-1)s_c}{2},
\end{equation}
and the function $g$ is graphed in Figure \ref{F:g-graph} for various values of $k$.
\end{theorem}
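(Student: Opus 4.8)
My plan is to turn the virial law \eqref{V''2} into a closed, autonomous differential inequality for the variance $V$ alone, and then to recognize \eqref{E:L} as the statement that the initial state lies strictly on the collapsing side of a separatrix of the resulting planar ODE. The crucial point is that the classical convexity bound $V_t^2\le 16\,V\,\|\grad u\|_{L^2}^2$ is too lossy; instead I would retain both components of the complex number $Z=\int \bar u\,(x\cdot\grad u)\,dx$. Integration by parts gives $\R Z=-\frac N2 M$, while $\I Z=\frac14 V_t$, so Cauchy--Schwarz $|Z|^2\le\|xu\|_{L^2}^2\|\grad u\|_{L^2}^2=V\,\|\grad u\|_{L^2}^2$ yields the sharpened bound
\[
V\,\|\grad u\|_{L^2}^2\ \ge\ \tfrac{N^2}{4}M^2+\tfrac1{16}V_t^2 .
\]
Substituting into \eqref{V''2} produces
\[
V_{tt}\ \le\ a-\frac{k}{2}\,\frac{V_t^2}{V}-\frac{d}{V},\qquad a=4N(p-1)E,\quad d=(p-1)s_cN^2M^2,
\]
with $k=\tfrac{(p-1)s_c}{2}$ exactly as in \eqref{E:g-def}. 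Since $E>0$, both $a$ and $d$ are positive, and the new mass term $-d/V$ (invisible to the crude argument) is precisely what will generate the centrifugal term $1/x^{k}$ in $g$.

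Next I would analyze the comparison ODE obtained by replacing $\le$ by $=$. Multiplying by $V^{k-1}V_t$ one checks that
\[
\mathcal H:=V^{k}V_t^2-\frac{2a}{1+k}V^{1+k}+\frac{2d}{k}V^{k}
\]
is conserved along the equality flow, while along the true solution $\tfrac{d}{dt}\mathcal H$ has the sign of $-V_t$; in particular $\mathcal H$ is nondecreasing whenever $V_t\le0$. Passing to the scale-invariant variable $x(t)=\tfrac{4E}{Ns_cM^2}V(t)$ — the argument of $g$ in \eqref{E:L} — the equality ODE has a single equilibrium, a saddle, located exactly at $x=1$, i.e. at the ground-state value $V_*=d/a=\tfrac{Ns_cM^2}{4E}$. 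A direct computation of the separatrix through this saddle gives $V_t^2=8Ns_cM^2\,g(x)^2$ on it, so its two branches are $V_t/M=\pm\sqrt{8Ns_c}\,|g(x)|$, the sign convention of \eqref{E:g-def} ($g>0$ for $x<1$, $g<0$ for $x>1$) selecting the correct branch. Thus \eqref{E:L} says precisely that $(V(0),V_t(0))$ lies strictly on the collapsing side of this separatrix.

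It then remains to show that such data are trapped and force $V\to0$ in finite time. If $V_t(0)\le0$, monotonicity of $\mathcal H$ keeps the trajectory strictly inside the collapse region, so $V_t$ stays negative and the identity $V_t^2=\mathcal H(t)V^{-k}+\tfrac{2a}{1+k}V-\tfrac{2d}{k}$ gives $-V_t\ge\sqrt{f_c(V)}$ with $f_c$ bounded below by a positive function away from $V_*$ and blowing up like $V^{-k}$ at $0$; hence $\int_0^{V(0)}f_c^{-1/2}dV<\infty$ and $V$ reaches $0$ at a finite $T_*$. If $x_0<1$ forces consideration of $V_t(0)>0$, I would use that for $V<V_*$ one has $V_{tt}\le a-d/V<0$, so the motion is strictly concave in the inner region; \eqref{E:L} guarantees the upward velocity is exhausted while $V<V_*$, after which $V_t<0$ and the previous argument applies. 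Finally $V(t)=\int|x|^2|u|^2\ge0$ together with mass conservation forbids $V(T_*)=0$ for a solution that still exists, so $T_+\le T_*<\infty$; in the energy-supercritical case one invokes local theory in $\dot H^{s_c}$ under \eqref{technical} and the propagation of finite variance to justify the computation.

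The main obstacle is the phase-plane step, because $\mathcal H$ is only monotone (not conserved) for the genuine inequality: one must argue rigorously that a trajectory launched strictly on the collapsing side never crosses back over the separatrix — most delicately in the case $x_0<1$, $V_t(0)>0$, where $\mathcal H$ is temporarily decreasing — and that the collapse time is genuinely finite (the exact separatrix, where $\mathcal H=\mathcal H_c$ and $f_c$ has a double root at $V_*$, stagnates at the equilibrium, which is why \eqref{E:L} must be a strict inequality). A secondary technical obstacle is justifying the sharpened gradient bound and the two virial identities in the low-regularity $\dot H^{s_c}$ setting, where $V$ need not be classically twice differentiable and the identities must be obtained by approximation.
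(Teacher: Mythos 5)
Your proposal is correct and takes essentially the same approach as the paper: your sharpened Cauchy--Schwarz bound retaining both $\R Z$ and $\I Z$ is exactly the paper's uncertainty principle \eqref{E:1}, your differential inequality for $V$ is \eqref{E:5}, and your monotone first integral $\mathcal{H}$, the saddle at $x=1$ with separatrix $V_t=M\sqrt{8Ns_c}\,g(x)$, and the bootstrap trapping argument reproduce the paper's mechanical analogy, its energy monotonicity \eqref{E:energy}, and the case analysis {\bf (A)}, {\bf (B)}, {\bf (C)} of Proposition \ref{P:collapse}. The only difference is cosmetic: you work directly in the variable $V$ via the integrating factor $V^{k-1}V_t$, whereas the paper first substitutes $V=B^{1/(\alpha+1)}$ and rescales ($B=B_{max}b(at)$, then $b=v^{\alpha+1}$) to remove the $V_t^2/V$ term before introducing the conserved particle energy.
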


\begin{theorem}
\label{T:2}
Suppose that $u_0\in H^1$ and $V(0) %\|xu_0\|_{L^2}
<\infty$. If $s_c>1$, assume furthermore \eqref{technical} and $u_0\in \dot{H}^{s_c}$.
The following is a sufficient condition for blow-up in finite time for NLS \eqref{E:NLS} with $s_c>0$ and $E[u] >0$:
\begin{equation}
 \label{E:LA}
\frac{V_t(0)}{M} < \frac{4\sqrt 2 (M^{1-s_c}E^{s_c})^\frac1{N}}{C}
~ g \left(C^2 \frac{E^\frac4{N(p-1)} V(0)}{M^{1+\frac{2(p+1)}{N(p-1)}}} \right)\,,
\end{equation}
where
\begin{equation}
\label{E:defC}
C=\left( \frac{2(p+1)}{s_c(p-1)}
\left(C_{p,N}\right)^{\frac{N(p-1)}2 +(p+1)} \right)^{\frac2{N(p-1)}}
\end{equation}
and $C_{p,N}$ is a sharp constant in the interpolation inequality \eqref{E:interp1}, given by \eqref{E:C-anyN-anypSimple}, the function $g$ is defined in \eqref{E:g-def} and graphed in Figure \ref{F:g-graph}.
\end{theorem}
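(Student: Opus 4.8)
The plan is to run the classical-mechanics/virial argument of Theorem~\ref{T:1}, reducing the blow-up assertion to the collapse of a scalar comparison ODE whose separatrix is exactly the curve $g$ of \eqref{E:g-def}. The only substantive change from Theorem~\ref{T:1} is that the \emph{sharp} interpolation inequality \eqref{E:interp1} (with constant $C_{p,N}$) replaces the bound used there; this is what produces the normalization $C$ in \eqref{E:defC} and the particular argument of $g$ appearing in \eqref{E:LA}. Throughout I would first dispose of the well-posedness technicalities: for $s_c>1$ the hypotheses \eqref{technical} and $u_0\in\dot H^{s_c}$ guarantee local well-posedness in $\dot H^{s_c}$, so that $V(t)$ is $C^2$ and the virial identities \eqref{V''}, \eqref{V''2} are valid on the maximal interval of existence.

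First I would record, combining \eqref{V''} with energy conservation, the identity
\[
V_{tt}(t)=16\,E[u]-\frac{8(p-1)s_c}{p+1}\int|u(t)|^{p+1},
\]
so that $V$ is driven entirely by the size of the potential energy. Inverting \eqref{E:interp1} and using mass conservation yields a lower bound of the form $\int|u(t)|^{p+1}\ge c\,V(t)^{-(k+1)}$ with $k+1=\tfrac{N(p-1)}4$ and $c=c(M,C_{p,N})$; the constant $C$ in \eqref{E:defC} is chosen precisely so that, after this substitution, $V$ obeys the autonomous differential inequality
\[
V_{tt}(t)\le A-B\,V(t)^{-(k+1)},\qquad A=16E,\quad B=A\,V_{eq}^{\,k+1},
\]
where $V_{eq}=(B/A)^{1/(k+1)}$ is the equilibrium variance; the argument of $g$ in \eqref{E:LA} is then $V(0)/V_{eq}$ and its prefactor is $\sqrt{2AV_{eq}}/M$.

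Next I would analyze the model equation $\ddot Y=A-BY^{-(k+1)}$. It is conservative with energy $\tfrac12\dot Y^2+U(Y)$, $U(Y)=-AY-\tfrac Bk Y^{-k}$, and $U$ has a single interior maximum at $Y=V_{eq}$; a direct computation shows that the level set through the unstable equilibrium $(V_{eq},0)$ is $\dot Y=\sqrt{2AV_{eq}}\,g(Y/V_{eq})$, i.e.\ exactly \eqref{E:g-def}. Thus \eqref{E:LA} says precisely that $(V(0),V_t(0))$ lies strictly below this separatrix, in the collapsing basin. To convert the differential inequality into collapse I would set $\mathcal H(t)=\tfrac12 V_t^2-AV-\tfrac Bk V^{-k}$ and compute $\frac{d}{dt}\mathcal H=V_t\bigl(V_{tt}-A+BV^{-(k+1)}\bigr)$; since the bracket is $\le0$, $\mathcal H$ is non-decreasing as long as $V_t\le0$. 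Starting strictly below the separatrix means $\mathcal H(0)>\mathcal H_{\mathrm{sep}}:=U(V_{eq})$ and $V_t(0)<0$; this forces $\tfrac12V_t^2>\mathcal H_{\mathrm{sep}}+AV+\tfrac Bk V^{-k}\ge0$ for all later times, so $V_t$ can never vanish, $V$ is strictly decreasing, and the $V^{-k}$ term gives $-V_t\gtrsim V^{-k/2}$. Integrating, $V^{(k+2)/2}$ decreases at a uniform linear rate, so $V(t)\to0$ at some finite $T_+$; finally $V(t)\to0$ forces $\|\nabla u(t)\|_{L^2}\ge\tfrac{NM}{2\sqrt{V(t)}}\to\infty$, hence breakdown with $T_+(u)<\infty$. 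The same monotonicity shows in passing that the set \eqref{E:LA} is invariant under the forward flow.

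The main obstacle is twofold. Analytically, the delicate point is obtaining the correct \emph{sharp} constant in $\int|u|^{p+1}\gtrsim V^{-(k+1)}$, i.e.\ identifying \eqref{E:interp1} together with its extremizer so that the reduced equation has \emph{exactly} the separatrix $g$ (this is the entire content of the constants $C_{p,N}$ and $C$); any loss here weakens the criterion. The second point is justifying the virial calculus in the energy-supercritical regime $s_c>1$, where one must ensure the $C^2$ regularity of $V$, the validity of \eqref{V''} on the maximal interval, and that $V\to0$ genuinely forces blow-up in $\dot H^{s_c}$. By contrast, the phase-plane trapping argument is soft once the autonomous inequality is in hand.
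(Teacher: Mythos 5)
Your overall route is the paper's own: insert the sharp interpolation inequality \eqref{E:interp1} into the virial identity \eqref{V''2}, rescale the variance so that the resulting autonomous differential inequality has separatrix given exactly by $g$ of \eqref{E:g-def} (this is the role of $C$ and $C_{p,N}$), and conclude collapse via the mechanical analogy of Section \ref{S:mechanics}; your remarks on local well-posedness for $s_c>1$ also match the paper. However, there is a genuine gap in the phase-plane step. When the argument of $g$ in \eqref{E:LA} is less than $1$ (i.e.\ $V(0)<V_{eq}$ in your notation), the condition $V_t(0)<+f(\,\cdot\,)$ admits data with \emph{sub-barrier} model energy and $V_t(0)\geq 0$ --- this is the paper's case ({\bf A$^*$}): $\mathcal{E}(0)<-1$, $v(0)<1$, arbitrary sign of $v_s(0)$. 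Your sentence ``starting strictly below the separatrix means $\mathcal{H}(0)>\mathcal{H}_{\mathrm{sep}}$ and $V_t(0)<0$'' is therefore false as a description of the region \eqref{E:LA}: it characterizes only the branch ({\bf B$^*$}). Consequently your trapping argument, which needs both $\mathcal{H}(0)>\mathcal{H}_{\mathrm{sep}}$ and $V_t\leq 0$ in order to get the uniform lower bound on $\tfrac12 V_t^2$, proves nothing for the trapped case, which is an essential (indeed open) portion of the blow-up set claimed in the theorem.

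The missing case is exactly what Proposition \ref{P:collapse}, case ({\bf A}), handles, and your own bracket computation supplies the tool once used with the opposite sign of $V_t$: while $V_t\geq 0$ one has $\frac{d}{dt}\mathcal{H}\leq 0$, so the model energy stays strictly below the barrier value and $V$ remains confined to an interval $[V(0),(1-\eps_0)V_{eq}]$; there the differential inequality gives $V_{tt}\leq -\eps_1<0$, and strict concavity is incompatible with $V\geq 0$ and global existence, so $V_t$ must turn nonpositive at some $t_1$ with $V(t_1)<V_{eq}$; a short bootstrap (as in the paper's proof) then keeps $V_t<0$, $V<V_{eq}$, $V_{tt}\leq -\eps_1$ thereafter and forces $V$ to vanish in finite time. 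The boundary case ({\bf C$^*$}) needs the same bootstrap, since your uniform bound $\tfrac12 V_t^2\geq \mathcal{H}(0)-\mathcal{H}_{\mathrm{sep}}$ degenerates when $\mathcal{H}(0)=\mathcal{H}_{\mathrm{sep}}$. One further simplification worth noting: the paper concludes by contradiction --- if $T_+=+\infty$, the nonnegative quantity $V$ would be forced below zero --- so your final step via the uncertainty principle ($V\to 0\Rightarrow \|\nabla u\|_{L^2}\to\infty$), while harmless, is unnecessary and slightly delicate for $s_c>1$, where the local theory controls $\dot{H}^{s_c}$ rather than $H^1$.
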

Let us emphasize that in both Theorems, the additional assumption in the supercritical case $s_c>1$ is only needed to ensure local well-posedness of the solution (see \cite[Theorem 3.1]{KiVi2}).

\begin{figure}
\includegraphics[scale=0.8]{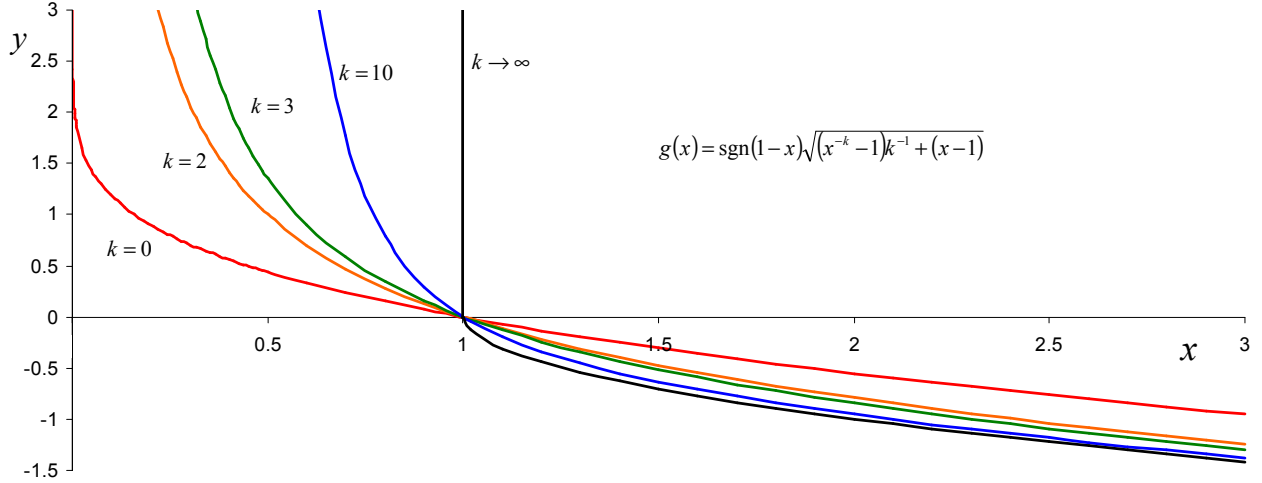}
\caption{A plot of the function $g(x)$ defined in \eqref{E:g-def} for various values of $k$, where $k = \frac{(p-1)s_c}{2}$. This function appears in Theorems \ref{T:1} and \ref{T:2}. The two limiting cases, $k \to 0$ (corresponding to $p \to 1+ \frac4{N}$) and $k \to \infty$ (corresponding to $p \to \infty$ or $N \to \infty$), are also indicated on the graph, for details refer to \eqref{E:functionflimits}.}
  \label{F:g-graph}
\end{figure}

Observe that both conditions deal with the normalized first derivative of the variance $\ds \frac{V_t(0)}{M}$ and the scaling-invariant quantities: $\ds \frac{V(0) E}{M^2}$ in Theorem \ref{T:1} and $\ds \frac{V(0) \,E^{\frac4{N(p-1)}}}{M^{1+\frac{2(p+1)}{N(p-1)}}}$ in Theorem \ref{T:2}.
For different values of $p$ and $N$, each criterion produces a different range of blow up solutions. For example, for the real-valued data that depends on the size of $M[u]^{1-s_c} E[u]^{s_c}$, see \eqref{E:real-comparison}.
A simplified version of Theorems \ref{T:1} and \ref{T:2} for real data is given in Section \S \ref{S:real}.
\bigskip

The structure of this paper is as follows: in Section \S \ref{S:Boundedness+Blowup}
we consider the energy-critical and energy-subcritical NLS equations and
prove the boundedness and blow up in finite time parts of Theorem \ref{T:BB}, then in Section \S \ref{S:Scattering} we show scattering for the bounded solutions (in the same range $0<s_c\leq 1$). In Section \S \ref{S:blowupcriteria} we investigate other blow up criteria, which are also valid for the energy-supercritical NLS equation. A sharp interpolation inequality is discussed in Section \S \ref{S:interpolation}, which is the key for Theorem \ref{T:2}. We conclude the paper with Section \S \ref{S:Examples}, where we illustrate Theorems \ref{T:1} and \ref{T:2} on the gaussian initial data in the energy-critical, supercritical and subcritical cases.

\subsection{Acknowledgements}
S.R. was partially supported by the NSF grants DMS-1103274 and CAREER-1151618.
T.D. was partially supported by ERC Grant Dispeq, ERC Avanced Grant  no. 291214, BLOWDISOL and ANR Grant SchEq.

%%%%%%%%%%%%%%%%%%%%%%%%%%%%%%%%%%%%
\section{Boundedness and Blow-up in the case $0<s_c \leq 1$.} \label{S:Boundedness+Blowup}

We start with recalling the Gagliardo-Nirenberg inequality from \cite{W83}
which is valid for values $p$ and $N$ such that $0 \leq s_c \leq
1$ (when $s_c=1$ it is the critical Sobolev inequality):
\begin{equation}
 \label{E:GN}
\Vert f \Vert^{p+1}_{L^{p+1}(\cR^N)} \leq c_{GN} \, \Vert \nabla f \Vert_{L^2(\cR^N)}^{\frac{N(p-1)}{2}} \, \Vert f \Vert_{L^2(\cR^N)}^{2-\frac{(N-2)(p-1)}{2}},
\end{equation}
with equality when $f=Q$, where $Q$ is the ground state solution of \eqref{E:Q-groundstate}.
Rewriting \eqref{E:GN} as
\begin{equation}
\label{E:sharpGN}
\left(\int |f|^{p+1}\right)^{\frac{4}{N(p-1)}} \leq c_Q \, M[f]^{\kappa}\int |\nabla f|^2, \qquad \kappa=\frac{2(p+1)}{N(p-1)}-1,
\end{equation}
we have
\begin{equation}
\label{E:kappa_cQ}
\qquad c_Q = (c_{GN})^{\frac4{N(p-1)}}=\frac{\left(\int Q^{p+1}\right)^{\frac{4}{N(p-1)}}}{M[Q]^{\kappa}\int |\nabla Q|^2}. %\equiv \left( \frac{p+1}2 \right)^{\frac4{N(p-1)}} \, \|Q\|_{L^2(\cR^N)}^{-4/N}.
\end{equation}
Note that $\kappa>0$ if $0\leq s_c<1$ and $\kappa=0$ if $s_c=1$. Using the Pohozhaev identity:
\begin{equation}
 \label{E:Pohozaev}
 \int |\nabla Q|^2=\frac{N(p-1)}{2(p+1)} \int Q^{p+1},
\end{equation}
we get the following expressions for $c_Q$
\begin{equation}
 \label{E:cQ2}
c_Q =\frac{2(p+1)}{N(p-1)}\frac{\left(\int Q^{p+1}\right)^{\frac{4}{N(p-1)}-1}}{M[Q]^{\kappa}}
=\left( \frac{8(p+1)}{A} \right)^{\frac{4}{N(p-1)}}\frac{s_c}{N} \frac{(E[Q])^{\frac{4}{N(p-1)}-1}}{M[Q]^{\kappa}},
\end{equation}
where
\begin{equation}
\label{E:A-const}
A=2(N(p-1)-4).
\end{equation}

%%%%%%%%%%%%%5

Our next observation is the following inequality, a consequence of \eqref{E:sharpGN} and Cauchy-Schwarz inequality in the spirit of Lemma 2.1, from the work of V.~Banica \cite{Banica}:
\begin{lemma}
\label{L:ValeriaGN}
Let $f\in H^1$ such that $xf\in L^2$. Then
$$
\left(\I\int x\cdot \nabla f\, \bar{f}\right)^2\leq \int |x|^2 |f|^2\left[\int |\nabla f|^2 -\frac{1}{c_QM[f]^{\kappa}} \left(\int |f|^{p+1}\right)^{\frac{4}{N(p-1)}}\right].
$$
\end{lemma}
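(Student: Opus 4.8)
The plan is to exploit the invariance of the $L^2$ and $L^{p+1}$ norms under multiplication by a quadratic phase, turning the sharp Gagliardo--Nirenberg inequality \eqref{E:sharpGN} into a one-parameter family of inequalities whose optimization produces exactly the claimed bound. Concretely, for a real parameter $\beta$ I would set $g_\beta = e^{i\beta |x|^2} f$. Since $xf\in L^2$ and $\nabla f\in L^2$, we have $\nabla g_\beta = e^{i\beta|x|^2}\bigl(\nabla f + 2i\beta\, x f\bigr)\in L^2$, so $g_\beta\in H^1$ and \eqref{E:sharpGN} applies to it. Crucially, $|g_\beta|=|f|$ pointwise, so $M[g_\beta]=M[f]$ and $\int |g_\beta|^{p+1}=\int |f|^{p+1}$; only the gradient term changes.

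First I would expand the gradient norm. Writing $|\nabla g_\beta|^2 = |\nabla f + 2i\beta\, xf|^2$ and using that $x$ is real, the cross term is $2\,\R\!\bigl(\nabla f\cdot\overline{2i\beta\, xf}\bigr) = 4\beta\,\I\bigl((x\cdot\nabla f)\bar f\bigr)$, which gives
\begin{equation*}
\int |\nabla g_\beta|^2 = \int |\nabla f|^2 + 4\beta^2\int |x|^2|f|^2 + 4\beta\,\I\!\int x\cdot\nabla f\,\bar f.
\end{equation*}
The one point requiring care is the sign bookkeeping in this cross term, via the identity $\R(iz)=-\I(z)$; getting it right is what makes the coefficient of $\beta$ come out to exactly $4\,\I\!\int x\cdot\nabla f\,\bar f$, and this is the only genuinely delicate step in the argument.

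Substituting into \eqref{E:sharpGN} for $g_\beta$ and dividing by $c_Q M[f]^{\kappa}$, I obtain that the quadratic polynomial
\begin{equation*}
P(\beta) = 4\Bigl(\int |x|^2|f|^2\Bigr)\beta^2 + 4\Bigl(\I\!\int x\cdot\nabla f\,\bar f\Bigr)\beta + \Bigl[\int |\nabla f|^2 - \tfrac{1}{c_Q M[f]^{\kappa}}\bigl(\int |f|^{p+1}\bigr)^{\frac{4}{N(p-1)}}\Bigr]
\end{equation*}
satisfies $P(\beta)\geq 0$ for every real $\beta$. Since the leading coefficient $4\int|x|^2|f|^2$ is positive (the degenerate case $xf\equiv 0$ being trivial), non-negativity of $P$ forces its discriminant to be $\leq 0$, i.e.
\begin{equation*}
16\Bigl(\I\!\int x\cdot\nabla f\,\bar f\Bigr)^2 \leq 16\,\Bigl(\int |x|^2|f|^2\Bigr)\Bigl[\int |\nabla f|^2 - \tfrac{1}{c_Q M[f]^{\kappa}}\bigl(\int |f|^{p+1}\bigr)^{\frac{4}{N(p-1)}}\Bigr].
\end{equation*}
Cancelling the factor $16$ yields precisely the assertion of Lemma \ref{L:ValeriaGN}. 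Thus the proof reduces to a phase-modulation computation followed by the elementary discriminant criterion; no deeper obstacle is expected beyond tracking the cross-term sign correctly.
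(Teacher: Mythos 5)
Your proposal is correct and follows essentially the same route as the paper's proof: applying the sharp Gagliardo--Nirenberg inequality \eqref{E:sharpGN} to the modulated function $e^{i\lambda|x|^2}f$ (the paper uses $\lambda$ where you use $\beta$), expanding the gradient to obtain a nonnegative quadratic in the phase parameter, and concluding via the discriminant criterion, just as in Banica's argument \cite{Banica} that the paper cites. Your sign bookkeeping for the cross term, $2\,\R\bigl(-2i\beta\,(x\cdot\nabla f)\bar f\bigr)=4\beta\,\I\int x\cdot\nabla f\,\bar f$, matches the paper's expansion exactly, and your handling of the degenerate case $xf\equiv 0$ is a harmless addition.
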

\begin{proof}
The proof is similar to the one in \cite{Banica}. We provide it for the sake of completeness.
We apply \eqref{E:sharpGN} to $e^{i\lambda |x|^2} f$, $\lambda\in \cR$.
Using that
$$
\int \left|\nabla\big( e^{i\lambda |x|^2}f\big) \right|^2 =
4\lambda^2\int |x|^2|f|^2 + 4\lambda \I \int x\cdot \nabla f\,\bar{f} + \int |\nabla f|^2,
$$
and using the Gagliardo-Nirenberg inequality \eqref{E:sharpGN}, we get
$$
\forall \lambda\in \cR,\quad c_Q M[f]^{\kappa}\left[ 4\lambda^2 \int |x|^2|f|^2 + 4\lambda\I \int x\cdot\nabla f\,\overline{f}
+ \int |\nabla f|^2 \right]-\left(\int |f|^{p+1}\right)^{\frac{4}{N(p-1)}} \geq 0,
$$
where the left-hand side is a polynomial in $\lambda$.
The discriminant of this polynomial in $\lambda$ must be negative, which yields the conclusion of the Lemma.
\end{proof}
\begin{remark}
 \label{R:equal_ValeriaGN}
Assume $f=e^{i\lambda |x|^2}Q$ for some $\lambda\in \cR$. Then the polynomial in the proof of Lemma \ref{L:ValeriaGN} admits $-\lambda$ as a double root, and its discriminant is $0$. As a consequence, the inequality in the conclusion of Lemma \ref{L:ValeriaGN} is an equality. Combining with \eqref{E:cQ2}, we get
$$
f=e^{i\lambda|x|^2}Q\Longrightarrow \left(\I \int x\cdot \nabla f \overline{f}\right)^2= \int |x|^2|f|^2\left(\int |\nabla f|^2-\frac{N(p-1)}{2(p+1)}\int |f|^{p+1}\right).
$$
\end{remark}

We next show a variational result which is a consequence of Gagliardo-Nirenberg (or Sobolev) inequality \eqref{E:sharpGN}.
\begin{claim}
\label{Cl:variational}
Let $f$ be in ${H}^1$ ($f$ in $\dot{H}^1$ if $s_c=1$). Then
\begin{multline}
\label{claim1}
\left(\int |\nabla f|^2\right)^{s_c}M[f]^{1-s_c}<\left(\int |\nabla Q|^2\right)^{s_c}M[Q]^{1-s_c}\\
\Longrightarrow
\left(\int |f|^{p+1}\right)^{s_c}M[f]^{1-s_c}<\left(\int |Q|^{p+1}\right)^{s_c}M[Q]^{1-s_c}.
\end{multline}
Assume furthermore that
\begin{equation}
 \label{sub_energy2}
M[f]^{\frac{1-s_c}{s_c}}E[f] \leq M[Q]^{\frac{1-s_c}{s_c}}E[Q], \quad \mbox{or} \quad \mathcal{ME} \leq 1.
\end{equation}
Then the reverse implication to \eqref{claim1} holds, and we obtain
\begin{multline}
\label{claim2}
\left(\int |\nabla f|^2\right)^{s_c}M[f]^{1-s_c}<\left(\int |\nabla Q|^2\right)^{s_c}M[Q]^{1-s_c}\\
\iff
\left(\int |f|^{p+1}\right)^{s_c}M[f]^{1-s_c}<\left(\int |Q|^{p+1}\right)^{s_c}M[Q]^{1-s_c}.
\end{multline}
Moreover, \eqref{claim2} also holds with non-strict inequalities (in the case of equality, $f$ is equal to $Q$ up to space translation, scaling and phase.)
\end{claim}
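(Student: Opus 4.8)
The plan is to treat the two implications of \eqref{claim2} separately, deriving both from the single sharp inequality \eqref{E:sharpGN} together with the Pohozaev identity \eqref{E:Pohozaev}. Throughout I abbreviate $\mathcal G[f]=\big(\int|\nabla f|^2\big)^{s_c}M[f]^{1-s_c}$ and $\mathcal L[f]=\big(\int|f|^{p+1}\big)^{s_c}M[f]^{1-s_c}$, so that the two sides of \eqref{claim1}--\eqref{claim2} read $\mathcal G[f]<\mathcal G[Q]$ and $\mathcal L[f]<\mathcal L[Q]$. When $s_c=1$ the factors $M[\cdot]^{1-s_c}$ are trivial and the argument is unchanged, with $\mathcal{ME}$ replaced by $\mathcal E$.

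For the forward implication \eqref{claim1} (which needs no energy hypothesis) I would raise \eqref{E:sharpGN} to the power $\frac{N(p-1)s_c}{4}$ and multiply by $M[f]^{1-s_c}$. A direct, if slightly tedious, bookkeeping of exponents---using $s_c=\frac N2-\frac2{p-1}$, equivalently $N(p-1)-4=2s_c(p-1)$, to check that the powers of $\int|\nabla f|^2$ and of $M[f]$ recombine correctly---shows the right-hand side is a fixed constant times $\mathcal G[f]^{N(p-1)/4}$. Since \eqref{E:sharpGN} is an equality at $f=Q$, that constant is pinned down by the $Q$-case, yielding
\[
\frac{\mathcal L[f]}{\mathcal L[Q]}\le\Big(\frac{\mathcal G[f]}{\mathcal G[Q]}\Big)^{N(p-1)/4}.
\]
As $N(p-1)/4>0$ (indeed $>1$, since $p>1+\frac4N$), $\mathcal G[f]<\mathcal G[Q]$ forces $\mathcal L[f]<\mathcal L[Q]$; the non-strict forward implication is identical, with equality only if \eqref{E:sharpGN} is an equality.

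For the reverse implication I would avoid inequalities and use an exact identity. Set $z=(\mathcal G[f]/\mathcal G[Q])^{1/s_c}$ and $w=(\mathcal L[f]/\mathcal L[Q])^{1/s_c}$, so that $z<1\iff\mathcal G[f]<\mathcal G[Q]$ and $w<1\iff\mathcal L[f]<\mathcal L[Q]$. Multiplying $E[f]=\frac12\int|\nabla f|^2-\frac1{p+1}\int|f|^{p+1}$ by $M[f]^{(1-s_c)/s_c}$ gives $M[f]^{(1-s_c)/s_c}E[f]=\frac12\mathcal G[f]^{1/s_c}-\frac1{p+1}\mathcal L[f]^{1/s_c}$; evaluating the same at $Q$ and invoking \eqref{E:Pohozaev} (which yields $M[Q]^{(1-s_c)/s_c}E[Q]=\frac{s_c}{N}\mathcal G[Q]^{1/s_c}$ and the proportionality $\mathcal L[Q]^{1/s_c}=\frac{2(p+1)}{N(p-1)}\mathcal G[Q]^{1/s_c}$) produces the clean formula
\[
\mathcal{ME}[f]=\frac{N}{2s_c}\,z-\frac{2}{s_c(p-1)}\,w .
\]
Both coefficients are positive and the right-hand side equals $1$ at $z=w=1$. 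Hence if $w<1$ but $\mathcal G[f]\ge\mathcal G[Q]$, i.e.\ $z\ge1$, then $\mathcal{ME}[f]>\frac{N}{2s_c}-\frac{2}{s_c(p-1)}=1$, contradicting \eqref{sub_energy2}. This proves the reverse implication, and excluding $z>1$, $w\le1$ the same way gives its non-strict version.

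For the equality statement I would use the invariance of $\mathcal G,\mathcal L,\mathcal{ME}$ under phase, translation, and the scaling $f\mapsto\lambda^{2/(p-1)}f(\lambda\cdot)$ to reduce to $f=\mu Q$. If $\mathcal G[f]=\mathcal G[Q]$, the identity above together with $\mathcal{ME}\le1$ forces $w=1$ and $\mathcal{ME}=1$, while equality in the bound of the first step forces equality in \eqref{E:sharpGN}; the characterization of Gagliardo--Nirenberg optimizers then identifies $f$ with a modulated, rescaled translate of $Q$, and the normalization $z=1$ fixes the amplitude $\mu=1$. The genuinely delicate point is exactly this last step: it relies on the uniqueness (up to the stated symmetries) of the Gagliardo--Nirenberg optimizer, and on carefully removing, via the constraint $z=1$, the amplitude scaling in the optimizer class---which is \emph{not} a symmetry of $\mathcal G$. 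The exponent bookkeeping in the first step, though routine, is the other place where an error could hide.
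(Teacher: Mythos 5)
Your proposal is correct and takes essentially the same route as the paper: the forward implication is exactly the renormalized sharp Gagliardo--Nirenberg inequality (the paper's \eqref{sharpGN2}, whose exponent bookkeeping you verified checks out via $\kappa=\frac{2(1-s_c)}{N}$), and your exact identity $\mathcal{ME}[f]=\frac{N}{2s_c}\,z-\frac{2}{s_c(p-1)}\,w$ is a Pohozaev-normalized repackaging of the paper's two-line energy-comparison contrapositive for the reverse implication (with the minor improvement that you handle $z\geq 1$ rather than only $z>1$). Your equality-case analysis --- forcing $w=1$ and equality in \eqref{E:sharpGN}, then invoking the characterization of Gagliardo--Nirenberg optimizers and using $z=1$ to pin the amplitude against the scale --- is sound and fills in what the paper leaves as a parenthetical assertion.
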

\begin{proof}
Using the inequality \eqref{E:sharpGN} with the value of $c_Q$ as in \eqref{E:kappa_cQ}, we write it in the renormalized form:
\begin{equation}
 \label{sharpGN2}
\left(\frac{M[f]^{\frac{1-s_c}{s_c}} \int |f|^{p+1}}{M[Q]^{\frac{1-s_c}{s_c}}\int |Q|^{p+1}}\right)^{\frac{4}{N(p-1)}} \leq
\frac{M[f]^{\frac{1-s_c}{s_c}}\int |\nabla f|^2}{M[Q]^{\frac{1-s_c}{s_c}}\int |\nabla Q|^2}.
\end{equation}
The implication \eqref{claim1} follows immediately.

Assume \eqref{sub_energy2}. In view of \eqref{claim1}, we only have to show the implication from right to left in \eqref{claim2}. Assume
\begin{equation*}
\left(\int |\nabla f|^{2}\right)^{s_c}M[f]^{1-s_c}>\left(\int |\nabla Q|^{2}\right)^{s_c}M[Q]^{1-s_c} .
\end{equation*}
Then
$$
E[Q]M[Q]^{\frac{1-s_c}{s_c}}\geq E[f]M[f]^{\frac{1-s_c}{s_c}}>\frac 12 \int |\nabla Q|^2M[Q]^{\frac{1-s_c}{s_c}}-\frac{1}{p+1}\int |f|^{p+1}M[f]^{\frac{1-s_c}{s_c}},
$$
and the desired inequality
\begin{equation*}
\left(\int |f|^{p+1}\right)^{s_c}M[f]^{1-s_c}>\left(\int |Q|^{p+1}\right)^{s_c}M[Q]^{1-s_c}
\end{equation*}
follows.
\end{proof}

\subsection{Proof of Theorem \ref{T:BB}}
In this part we prove Theorem \ref{T:BB}, except for the scattering statement in the end of this theorem which is proved in Section \ref{S:Scattering}.

The conclusion of Theorem \ref{T:BB} is known if $\mathcal{ME} \leq 1$ (see Remark \ref{R:right-hand}).
We will thus assume
\begin{equation}
\label{cond_1_r}
\mathcal{ME} > 1.
\end{equation}

Recalling the variance $V(t)$ from \eqref{E:variance} and its second derivative \eqref{V''2}, we obtain
\begin{equation}
 \label{norms_u}
\int |u|^{p+1}=\frac{(p+1)(16E[u]-V_{tt})}{2A} \, ,\quad  \int |\nabla u|^2=\frac{4N(p-1)E[u]-V_{tt}}{A},
\end{equation}
where $A$ is defined in \eqref{E:A-const}.
Note that the first expression in \eqref{norms_u} implies that $V_{tt}\leq 16E[u]$ for all $t$.

Using the definition of $V_t$ from \eqref{V'} and Lemma \ref{L:ValeriaGN}, we get
\begin{equation}
 \label{bound_V'}
(V_t(t))^2 \leq 16 V(t) \left[\int |\nabla u(t)|^2-\frac{1}{c_Q M[u]^{\kappa}} \left(\int |u(t)|^{p+1}\right)^{\frac{4}{N(p-1)}}\right].
\end{equation}

Substituting \eqref{norms_u} into \eqref{bound_V'} and abbreviating $E=E[u]$, we obtain
\begin{equation}
\label{bound_z'}
(z_t)^2\leq 4 \, \varphi(V_{tt}),
\end{equation}
where
$$
z(t)=\sqrt{V(t)}
$$
and
\begin{equation}
 \label{defphi}
\varphi(\sigma) = -\frac{\sigma}{A}+\frac{4N(p-1)E}{A}-\frac{1}{c_Q M[u]^{\kappa}} \left(\frac{(p+1)(16E-\sigma)}{2A}\right)^{\frac{4}{N(p-1)}}
\end{equation}
is defined for $\sigma\in (-\infty,16E]$. We have
$$
\varphi'(\sigma)=-\frac{1}{A}+\frac{4}{c_Q M[u]^{\kappa} N(p-1)}\left(\frac{p+1}{2A} \right)^{\frac{4}{N(p-1)}} \left(16E-\sigma\right)^{\frac{4}{N(p-1)}-1}.
$$
Since $\frac{4}{N(p-1)}-1<0$ ($s_c>0$), $\varphi$ is decreasing on $(-\infty,\sigma_{m})$, increasing on $(\sigma_m,16E]$, where $\sigma_m$ is given by the equation
\begin{equation}
 \label{eq_sm}
\frac{1}{A}=\frac{4}{c_Q M[u]^{\kappa}N(p-1)}\left(\frac{p+1}{2A}\right)^{\frac{4}{N(p-1)}}
\left(16E-\sigma_m\right)^{\frac{4}{N(p-1)}-1}.
\end{equation}
Note that this implies that
\begin{equation}
 \label{phi_sm}
\varphi(\sigma_m)=\frac{\sigma_m}{8}.
\end{equation}
Furthermore, using \eqref{E:cQ2}, we can rewrite \eqref{eq_sm} as
\begin{equation}
 \label{good_sm}
\left(\frac{M[u]}{M[Q]}\right)^{1-s_c}\left(\frac{E[u]-\frac{1}{16}\sigma_m}{E[Q]}\right)^{s_c}=1.
\end{equation}
As a consequence, \eqref{cond_1_r} is equivalent to
\begin{equation}
 \label{cond_1_bis}
\sigma_m\geq 0,
\end{equation}
and \eqref{cond_1} is equivalent to
\begin{equation}
 \label{cond_1_ter}
z_t(0)^2\geq 4\varphi(\sigma_m)=\frac{\sigma_m}{2}.
\end{equation}

\noindent\underline{First case:}
we assume \eqref{cond_Bup_1} and \eqref{cond_Bup_2}. Note that \eqref{cond_Bup_1} means exactly
\begin{equation}
 \label{cond_Bup_1_bis}
z_t(0)\leq 0.
\end{equation}
In view of \eqref{E:cQ2}, the assumption \eqref{cond_Bup_2} is equivalent to
$$
\left(\frac{M[u]}{M[Q]}\right)^{1-s_c}\left(\frac{\frac{ A}{8(p+1)}\int |u_0|^{p+1}}{E[Q]}\right)^{s_c}>1=\left(\frac{M[u]}{M[Q]}\right)^{1-s_c} \left(\frac{E[u]-\frac{\sigma_m}{16}}{E[Q]}\right)^{s_c},
$$
that is, by \eqref{norms_u},
\begin{equation}
 \label{cond_Bup_2_bis}
V_{tt}(0)<\sigma_m.
\end{equation}
We will show by contradiction that
\begin{equation}
 \label{z''negative}
\forall t\in [0,T_+(u)), \quad z_{tt}(t)<0.
\end{equation}
Note that
\begin{equation}
\label{E:z-tt}
z_{tt}=\frac{1}{z} \left(\frac{V_{tt}}{2}-(z_t)^2\right),
\end{equation}
and that $z_{tt}$ is continuous on $[0,T_+(u))$.
By \eqref{cond_1_ter} and \eqref{cond_Bup_2_bis},
\begin{equation*}
z_{tt}(0)<0.
\end{equation*}
Assume that \eqref{z''negative} does not hold. Then there exists $t_0\in (0,T_+(u))$ such that
$$
\forall t\in [0,t_0), \; z_{tt}(t)<0\text{ and }z_{tt}(t_0)=0.
$$
By \eqref{cond_1_ter} and \eqref{cond_Bup_1_bis},
\begin{equation}
 \label{z'neg}
\forall t\in (0,t_0],\quad z_t(t)<z_t(0)\leq -\sqrt{4\varphi(\sigma_m)}.
\end{equation}
Hence,
$\left(z_{t}\right)^2>4\varphi(\sigma_m)$, which, combined with \eqref{bound_z'}, implies that
\begin{equation*}
 \forall t\in (0,t_0],\quad \varphi(V_{tt})>\varphi(\sigma_m).
\end{equation*}
As a consequence, $V_{tt}(t)\neq \sigma_m$ for $t\in (0,t_0]$, and by \eqref{cond_Bup_1_bis} and continuity of $V_{tt}$,
\begin{equation}
\label{V''sigma}
\forall t\in [0,t_0],\quad V_{tt}(t)<\sigma_m.
\end{equation}
Combining \eqref{z'neg} and \eqref{V''sigma}, we obtain
$$
z_{tt}(t_0)=\frac{1}{z(t_0)}\left(\frac{V_{tt}(t_0)}{2}-(z_t(t_0))^2 \right)<\frac{1}{z(t_0)}\left(\frac{\sigma_m}{2}-\frac{\sigma_m}{2}\right)=0,
$$
contrary to the definition of $t_0$. Thus, the proof of \eqref{z''negative} is complete.

Assume that $T_+(u)=+\infty$. Then by \eqref{cond_Bup_1_bis} and \eqref{z''negative},
$$
\forall t\geq 1,\quad z_t(t)\leq z_t(1)<0,
$$
a contradiction with the fact that $z(t)$ is positive.
\smallskip

\noindent\underline{Second case:}
we now assume, in addition to \eqref{cond_1} and \eqref{cond_1_r}, that \eqref{cond_bounded_1} and \eqref{cond_bounded_2} hold. In other words, in addition to \eqref{cond_1_bis} and \eqref{cond_1_ter}, we also assume the following inequalities
\begin{gather}
 \label{cond_bounded_1_bis}
z_t(0)\geq 0\\
\label{cond_bounded_2_bis}
V_{tt}(0)>\sigma_m.
\end{gather}
We first notice that there exists $t_0 \geq 0$ such that
\begin{equation}
\label{eq:1}
z_t(t_0) > 2 \sqrt{ \varphi(\sigma_m)}.
\end{equation}
Indeed, by \eqref{cond_1_ter} and \eqref{cond_bounded_1_bis}, $z_t(0) \geq 2 \sqrt{\varphi(\sigma_m)}$. If the inequality is strict, then we are done with $t_0=0$.
If not, then by \eqref{E:z-tt} and \eqref{cond_bounded_2_bis}, $z_{tt}(0)>0$ and \eqref{eq:1} follows for small $t_0>0$.

Let $\epsilon_0 >0$ be a small parameter and assume
\begin{equation}
\label{eq:2}
z_t(t_0) \geq 2\sqrt{\varphi(\sigma_m)} + 2 \epsilon_0.
\end{equation}
We will prove by contradiction
\begin{equation}
\label{eq:3}
\forall t \geq t_0 \quad z_t(t) > 2 \sqrt{\varphi(\sigma_m)} + \epsilon_0.
\end{equation}
Assume that \eqref{eq:3} does not hold, and let
\begin{equation}
\label{eq:4}
t_1 = \inf\{t \geq t_0 : z_t(t) \leq 2 \sqrt{\varphi(\sigma_m)}+\epsilon_0 \}.
\end{equation}

By \eqref{eq:2} $t_1>t_0$. By continuity
\begin{equation}
\label{eq:5}
z_t(t_1) = 2 \sqrt{\varphi(\sigma_m)} + \epsilon_0
\end{equation}
and
\begin{equation}
\label{eq:6}
\forall t \in [t_0, t_1] \quad z_t(t) \geq 2\sqrt{\varphi(\sigma_m)} + \epsilon_0.
\end{equation}
By \eqref{bound_z'}
\begin{equation}
\label{eq:6'}
\forall t \in [t_0,t_1] \quad (2\sqrt{\varphi(\sigma_m)}+\epsilon_0)^2\leq (z_t(t))^2 \leq 4 \varphi(V_{tt}(t)).
\end{equation}

As a consequence, $\varphi(V_{tt}(t)) > \varphi(\sigma_m)$ for all $t \in [t_0, t_1]$, thus,
$V_{tt}(t) \neq \sigma_m$ and by continuity $V_{tt}(t) > \sigma_m$ for $t \in [t_0, t_1]$.

We prove that there exists a universal constant $D>0$ such that
\begin{equation}
\label{eq:7}
\forall t \in [t_0,t_1] \quad V_{tt}(t) \geq \sigma_m + \frac{\sqrt{\epsilon_0}}{D}.
\end{equation}
Indeed, by the Taylor expansion of $\varphi$ around $\sigma=\sigma_m$, there exists $a>0$ such that
\begin{equation}
\label{eq:8}
|\sigma-\sigma_m| \leq 1 \quad \Rightarrow \quad \varphi(\sigma) \leq \varphi(\sigma_m) + a(\sigma-\sigma_m)^2.
\end{equation}
If $V_{tt}(t)\geq \sigma_m+1$, then \eqref{eq:7} holds (taking $D$ large).
If $\sigma_m<V_{tt}(t) \leq \sigma+1$, then by \eqref{eq:6'} and \eqref{eq:8}, we obtain
$$
(2\sqrt{\varphi(\sigma_m)} + \epsilon_0)^2 \leq (z_t(t))^2 \leq 4\, \varphi(V_{tt}(t)) \leq 4\,\varphi(\sigma_m) + 4 a (V_{tt}(t)-\sigma_m)^2,
$$
thus
$$
4\sqrt{\varphi(\sigma_m)}\epsilon_0 + \epsilon_0^2 \leq 4a(V_{tt}-\sigma_m)^2,
$$
and we get \eqref{eq:7} with $D = \sqrt{a}\,(\varphi(\sigma_m))^{-1/4}$.

However, by \eqref{E:z-tt} and \eqref{eq:5} we have
\begin{align*}
z_{tt}(t_1) &= \frac1{z(t_1)}\left(\frac{V_{tt}(t_1)}{2} - (z_t(t_1))^2 \right) \\
& \geq \frac1{z(t_1)}\left(\frac{\sigma_m}{2} + \frac{\sqrt{\epsilon_0}}{2D} - (2\sqrt{\varphi(\sigma_m)} + \epsilon_0)^2\right)\\
& \geq \frac1{z(t_1)}\left(\frac{\sqrt{\epsilon_0}}{2D} - 4\epsilon_0 \sqrt{\varphi(\sigma_m)} - \epsilon_0^2\right) > 0,
\end{align*}
if $\epsilon_0$ is small enough, thus, contradicting \eqref{eq:5} and \eqref{eq:6}. Therefore, we obtain \eqref{eq:3}. Note that we have also shown that the inequality \eqref{eq:7} holds for all $t \in [t_0,T_+(u))$. Hence (using the first equality in \eqref{norms_u}, Pohozhaev identity \eqref{E:Pohozaev} and the characterization \eqref{good_sm} of $\sigma_m$),
\begin{multline*}
M[u]^{1-s_c}\left(\int |u(t)|^{p+1}\right)^{s_c}\leq M[u]^{1-s_c}\left(\frac{p+1}{2A}(16E-\sigma_m-\sqrt{\eps_0}{M})\right)^{s_c}<M[u]^{1-s_c}\left(\frac{p+1}{2A}(16E-\sigma_m)\right)^{s_c}\\
=M[Q]^{1-s_c}\left(\int |Q|^{p+1}\right)^{s_c},
\end{multline*}
which gives \eqref{good_bound}. This concludes the proof of Theorem \ref{T:BB}, except for the fact that \eqref{good_bound} implies that the solution $u$ scatters forward in time, which is proved in Section \ref{S:Scattering}.

\subsection{Dichotomy for quadratic phase initial data}
We next study the behavior of solutions with data modulated by a quadratic phase, proving Corollary \ref{C:KeMe} except for the scattering statement which will follow from \eqref{good_bound'} and Section \ref{S:Scattering}:
\begin{corollary}
\label{C:KeMe'}
Let $\gamma\in \cR\setminus \{0\}$, $v_0$ be such that $\mathcal{ME}[v_0] \leq 1$, and $u^{\gamma}$ be the solution of \eqref{E:NLS} with initial data
$$
u_0^{\gamma}=e^{i\gamma |x|^2}v_0.
$$
\begin{itemize}
\item
If $\left(\int |v_0|^{p+1}\right)^{s_c} M[v_0]^{1-s_c} > \left(\int | Q|^{p+1}\right)^{s_c}M[Q]^{1-s_c}$, then
$$
\forall ~ \gamma<0,\quad T_+(u^{\gamma})<\infty.
$$
\item
If $\left(\int |v_0|^{p+1}\right)^{s_c}M[v_0]^{1-s_c} < \left(\int |Q|^{p+1}\right)^{s_c}M[Q]^{1-s_c}$, then
\begin{equation}
\label{good_bound'}
\forall ~ \gamma >0,\quad \limsup_{t \to T_+(u)} ~~ M[u_0]^{1-s_c} \left(\int |u^{\gamma}(t)|^{p+1}\right)^{s_c}<M[Q]^{1-s_c}\left(\int |Q|^{p+1}\right)^{s_c}.
\end{equation}
\end{itemize}
\end{corollary}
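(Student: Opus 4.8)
The plan is to verify the hypotheses of Theorem~\ref{T:BB} for the modulated datum $u_0^\gamma=e^{i\gamma|x|^2}v_0$ and read off the two conclusions from its Parts~1 and~2, falling back on the below-threshold dichotomy (Theorem~\ref{T:DHR} if $s_c<1$, Theorem~\ref{T:KMD} if $s_c=1$) in the range where the modulation pushes the datum below the threshold. First I would record the transformation rules under the quadratic phase. Since $|u_0^\gamma|=|v_0|$, both $M[u_0^\gamma]=M[v_0]$ and $\int|u_0^\gamma|^{p+1}=\int|v_0|^{p+1}$, so the $L^{p+1}$-hypotheses of the two bullets carry over to $u_0^\gamma$ for \emph{every} $\gamma$; similarly $V^\gamma(0)=\int|x|^2|v_0|^2=:V_0$ is unchanged (and $u_0^\gamma\in H^1$ since $xv_0\in L^2$). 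From $\nabla u_0^\gamma=e^{i\gamma|x|^2}(\nabla v_0+2i\gamma x v_0)$ together with the expansion of $\int|\nabla(e^{i\gamma|x|^2}v_0)|^2$ already used in the proof of Lemma~\ref{L:ValeriaGN}, a direct computation gives
\[
V_t^\gamma(0)=V_t^{v_0}(0)+8\gamma V_0,\qquad
E[u_0^\gamma]=E[v_0]+2\gamma^2V_0+\tfrac{\gamma}{2}\,V_t^{v_0}(0).
\]

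The heart of the matter is that the combination appearing in condition \eqref{cond_1} is invariant under the modulation. Writing \eqref{cond_1} for $u_0^\gamma$ and using that $\mathcal{ME}[u_0^\gamma]=c\,E[u_0^\gamma]$ with $c=M[v_0]^{\frac{1-s_c}{s_c}}/\bigl(M[Q]^{\frac{1-s_c}{s_c}}E[Q]\bigr)>0$ independent of $\gamma$, the left-hand side of \eqref{cond_1} equals $c\bigl(E[u_0^\gamma]-\tfrac{(V_t^\gamma(0))^2}{32V_0}\bigr)$. Inserting the two formulas above, every $\gamma$-dependent term cancels and
\[
E[u_0^\gamma]-\frac{(V_t^\gamma(0))^2}{32V_0}=E[v_0]-\frac{(V_t^{v_0}(0))^2}{32V_0}\le E[v_0].
\]
Multiplying by $c$ and invoking the standing hypothesis $\mathcal{ME}[v_0]=c\,E[v_0]\le1$, we conclude that \eqref{cond_1} holds for $u_0^\gamma$ for \emph{all} $\gamma\in\cR$ (in the degenerate case $E[u_0^\gamma]\le0$ condition \eqref{cond_1} is not used, since finite-variance nonpositive-energy data blow up by the classical convexity argument). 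This is the one place where $\mathcal{ME}[v_0]\le1$ enters, and I expect it to be the crux.

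It remains to match the sign requirement on $V_t^\gamma(0)$ in Theorem~\ref{T:BB}. Here I would exploit a second feature of the formulas: $E[u_0^\gamma]$ is an upward parabola in $\gamma$ whose vertex lies exactly at $\gamma_\ast=-V_t^{v_0}(0)/(8V_0)$, which is precisely the zero of the linear function $\gamma\mapsto V_t^\gamma(0)$. For the first bullet ($\gamma<0$, $L^{p+1}$-supercritical): if $V_t^\gamma(0)\le0$ I apply Part~1 of Theorem~\ref{T:BB} directly to obtain $T_+(u^\gamma)<\infty$. The only $\gamma<0$ with $V_t^\gamma(0)>0$ lie in $(\gamma_\ast,0)$, which is nonempty only when $\gamma_\ast<0$; for such $\gamma$ one sits on the increasing branch of the parabola strictly left of $\gamma=0$, so $E[u_0^\gamma]<E[v_0]$ and hence $\mathcal{ME}[u_0^\gamma]<\mathcal{ME}[v_0]\le1$, and the desired blow-up follows instead from the finite-variance, $L^{p+1}$-supercritical part of the below-threshold dichotomy (or the classical convexity argument when $E[u_0^\gamma]\le0$). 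The second bullet ($\gamma>0$, $L^{p+1}$-subcritical) is symmetric: for $V_t^\gamma(0)\ge0$ I invoke Part~2 of Theorem~\ref{T:BB} to get \eqref{good_bound'}, while the residual range $\gamma\in(0,\gamma_\ast)$ (nonempty only if $\gamma_\ast>0$) again satisfies $\mathcal{ME}[u_0^\gamma]<1$, so \eqref{good_bound'} is contained in the below-threshold scattering statement. The main subtlety to get right is exactly this split: the ``wrong-sign'' values of $V_t^\gamma(0)$ are precisely those for which the modulation drops the datum below the mass-energy threshold, where the classical dichotomy already applies, so no new mechanism is needed there.
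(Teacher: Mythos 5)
Your proposal is correct and follows essentially the same route as the paper: your invariance identity $E[u_0^\gamma]-\frac{(V_t^\gamma(0))^2}{32V(0)}=E[v_0]-\frac{(V_t^{v_0}(0))^2}{32V(0)}$ is exactly the paper's \eqref{E:the_same}, used in the same way to verify \eqref{cond_1} for all $\gamma$ before invoking Theorem \ref{T:BB}, with the below-threshold dichotomy (plus the classical convexity argument for $E\leq 0$) covering the complementary regime. Your case split on the sign of $V_t^\gamma(0)$ (wrong sign $\Rightarrow$ $\gamma$ between $0$ and the vertex $\gamma_\ast$ of the energy parabola $\Rightarrow$ $\mathcal{ME}[u_0^\gamma]<1$) is simply the contrapositive of the paper's split on $\mathcal{ME}[u_0^\gamma]\lessgtr 1$ (above threshold $\Rightarrow$ $\gamma\geq\gamma_c^+$ $\Rightarrow$ correct sign of $V_t^\gamma(0)$), so the two arguments rest on identical algebra.
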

\begin{proof}
Let $v_0$ satisfy $\mathcal{ME}[v_0] \leq 1$, $\gamma \in \cR\setminus \{0\}$ and $u$ be the solution with initial data $u_0=e^{i\gamma|x|^2}v_0$ (we drop the superscripts $\gamma$ to simplify the notation).
If $\mathcal{ME}[u_0] \leq 1$,
then \eqref{claim2} in Claim \ref{Cl:variational} and the usual blow-up/scattering dichotomy
implies the result (see \cite{KM} or Theorem \ref{T:KMD}   for the energy-critical case, \cite{AMRX}, \cite{G} or Theorem \ref{T:DHR} for a general energy-subcritical case).
We thus assume
\begin{equation}
\label{super_energy}
\mathcal{ME}[u_0] \geq 1, \quad \mbox{or}, \quad
E[u_0]M[u_0]^{\frac{1-s_c}{s_c}}\geq E[Q]M[Q]^{\frac{1-s_c}{s_c}}.
\end{equation}
We will show that $u_0$ satisfies the assumptions of Theorem \ref{T:BB}. We have
\begin{gather}
\label{E_u0}
E[u_0]=E[v_0]+2\gamma \im \int x\cdot \nabla v_0\, \overline{v}_0 +2\gamma^2 \int |x|^2|v_0|^2 \quad \mbox{and}\\
\label{mom_u0}
\im \int \overline{u}_0 \, x \cdot \nabla u_0=
\im \int \overline{v}_0 \, x \cdot \nabla v_0+2\gamma \int |x|^2|v_0|^2.
\end{gather}
As a consequence,
\begin{equation}
\label{E:the_same}
E[u_0]-\frac{\left(\im \int x\cdot \nabla u_0\, \overline{u}_0\right)^2}{2\int |x|^2|u_0|^2}
= E[v_0]-\frac{\left(\im \int x\cdot \nabla v_0\, \overline{v}_0\right)^2}{2\int |x|^2|v_0|^2},
\end{equation}
and the assumption \eqref{cond_1} follows from writing out explicitly $\mathcal{ME}[v_0] \leq 1$.

We will only treat the case when
\begin{equation}
 \label{scatt_case}
\gamma>0 \quad\text{ and }\quad
M[v_0]^{\frac{1-s_c}{s_c}}\int |v_0|^{p+1}<M[Q]^{\frac{1-s_c}{s_c}}\int |Q|^{p+1},
\end{equation}
the proof of the other case is similar and is left to the reader. Of course,
\begin{equation}
 \label{hyp1OK}
M[u_0]^{\frac{1-s_c}{s_c}}\int |u_0|^{p+1} = M[v_0]^{\frac{1-s_c}{s_c}}\int |v_0|^{p+1}<M[Q]^{\frac{1-s_c}{s_c}} \int |Q|^{p+1},
\end{equation}
which shows that \eqref{cond_bounded_2} is satisfied. Since $\gamma$ is positive, we see by \eqref{E_u0} that \eqref{super_energy} implies that $\gamma\geq \gamma_c^+$, where $\gamma_c^+$ is the unique positive solution of
$$
\left(E[v_0]+2\gamma_c^+ \im \int x\cdot \nabla v_0 \overline{v}_0 +2(\gamma_c^+)^2 \int |x|^2|v_0|^2\right)M[v_0]^{\frac{1-s_c}{s_c}}=E[Q]M[Q]^{\frac{1-s_c}{s_c}}.
$$
Since $\mathcal{ME}[v_0]<1$, or equivalently, $M[v_0]^{\frac{1-s_c}{s_c}}E[v_0]\leq M[Q]^{\frac{1-s_c}{s_c}}E[Q]$, the above line implies
$$
\im \int x\cdot \nabla v_0\overline{v}_0+\gamma_c^+ \int |x|^2|v_0|^2\geq 0.
$$
Using that $\gamma\geq \gamma_c^+$, we see that
$$
\im \int x\cdot \nabla u_0\overline{u}_0 =
\im \int x\cdot \nabla v_0\overline{v}_0+2\gamma \int |x|^2|v_0|^2 > \gamma\int |x|^2|v_0|^2>0,
$$
which yields the assumption \eqref{cond_bounded_1}.
Theorem \ref{T:BB} applies, which concludes the proof of Corollary \ref{C:KeMe}.
\end{proof}

We now consider the ground state with the quadratic phase and prove Corollary \ref{C:ground_states}.

\begin{proof}[Proof of Corollary \ref{C:ground_states}]
Denoting $Q=W$, the proof is the same in the energy-critical case as in the energy-subcritical case and we shall not distinguish the two cases. Note that $xW\in L^2(\cR^N)$ if and only if $N\geq 7$, hence our assumption on the dimension in the energy-critical case.

Using that if $u(x,t)$ is a solution, then $\overline{u}(x,-t)$ is also a solution, it is sufficient to prove the assertions on $Q^{\gamma}$ for positive times. Assume that $\gamma$ is positive. Then $Q^{\gamma}$ almost satisfies the assumptions of Theorem \ref{T:BB}, in the sense that it satisfies \eqref{cond_1}, \eqref{cond_bounded_1} and the equality corresponding to the strict inequality in \eqref{cond_bounded_2}. We will show that the solution $Q^{\gamma}_{t_0}(x,t)=Q^{\gamma}(t_0+t,x)$ satisfies the assumptions \eqref{cond_1}, \eqref{cond_bounded_1} and \eqref{cond_bounded_2} for small positive $t_0$, which will imply by Theorem \ref{T:BB} that $Q^{\gamma}$ is bounded for positive time $t>0$.

We first note that
$$
\im \int x\cdot \nabla Q^{\gamma}_0 \, \overline{Q^{\gamma}_0}=2\gamma \int |x|^2|Q|^2,
$$
so that
\begin{equation}
\label{sign_V'Q}
\im \int x\cdot \nabla Q^{\gamma}_{t_0} \, \overline{Q^{\gamma}_{t_0}}>0
\end{equation}
for small $t_0$, which shows that $Q_{t_0}^{\gamma}$ satisfies \eqref{cond_bounded_1} for small $t_0$.

Now using that $Q^{\gamma}$ satisfies \eqref{E:NLS}, we get
\begin{equation*}
\frac{d}{dt} \int |Q^{\gamma}|^{p+1}=(p+1)\re \int \left|Q^{\gamma}\right|^{p-1} \overline{Q^{\gamma}}\,\partial_t Q^{\gamma}=-(p+1)\im \int |Q^{\gamma}|^{p-1}\overline{Q^{\gamma}} \Delta Q^{\gamma}.
\end{equation*}
Since, at $t=0$,
$$
\Delta Q_0^{\gamma}=e^{i\gamma|x|^2} \left(2N i\gamma Q + 4i\gamma \, x \cdot \nabla Q-4\gamma^2|x|^2 Q + \Delta Q\right),
$$
we get
\begin{equation*}
\left[\frac{d}{dt} \int |Q^{\gamma}|^{p+1}\right]_{\restriction t=0}=-2 \gamma N (p+1) \int Q^{p+1}-4\gamma (p+1)\int  Q^{p} x\cdot \nabla Q=-2\gamma N(p-1)\int Q^{p+1}<0.
\end{equation*}
As
\begin{equation*}
M\left[Q^{\gamma}_0\right]^{\frac{1-s_c}{s_c}}\int \left|Q^{\gamma}_0\right|^{p+1}=M[Q]^{\frac{1-s_c}{s_c}}\int |Q|^{p+1},
\end{equation*}
we obtain that $Q^{\gamma}_{t_0}$ satisfies assumption \eqref{cond_bounded_2} for small $t_0$. It remains to check \eqref{cond_1}.

Let
$$
F(t)=M[Q^{\gamma}]^{\frac{1-s_c}{s_c}}\left(E[Q^{\gamma}]-\frac{\left(\I \int x\cdot\nabla Q^{\gamma}(t) \, \overline{Q^{\gamma}}(t)\right)^2}{2\int |x|^2 |Q^{\gamma}(t)|^2} \right)-M[Q]^{\frac{1-s_c}{s_c}}E[Q].
$$
By \eqref{E:the_same} with $v_0=Q$, $F(0)=0$. We must check that $F(t)\leq 0$ for small positive $t$. We will use the same notations as in the proof of Theorem \ref{T:BB}:
$$
V(t)=\int |x|^2|Q^{\gamma}(x,t)|^2 \, dx,\quad z(t)=\sqrt{V(t)}.
$$
Then
$$
F(t)=M[Q^{\gamma}]^{\frac{1-s_c}{s_c}}\left(E[Q^{\gamma}]-\frac 18(z_t(t)^2)\right)-M[Q]^{\frac{1-s_c}{s_c}}E[Q].
$$
Thus,
$$
F_t(t)=-\frac 14 M[Q^{\gamma}]^{\frac{1-s_c}{s_c}}z_t(t)z_{tt}(t),
$$
and
$$
F_{tt}(0)=-\frac 14 M[Q^{\gamma}]^{\frac{1-s_c}{s_c}}\Big(z_t(0)z_{ttt}(0)+(z_{tt}(0))^2\Big).
$$
By Remark \ref{R:equal_ValeriaGN} and \eqref{V''}, $z_{tt}(0)=0$, and thus, $F_t(0)=0$ and
$$
F_{tt}(0)=-\frac 14 M[Q^{\gamma}]^{\frac{1-s_c}{s_c}}z_t(0)z_{ttt}(0).
$$
Using that
$$
V_{tt}=2(z_t)^2+2zz_{tt},\quad V_{ttt}=6z_tz_{tt}+2z\,z_{ttt},
$$
we obtain that $V_{ttt}(0)=2z(0)\,z_{ttt}(0)$, and (since, by \eqref{sign_V'Q}, $z_t(0)>0$) the sign of $F_{tt}(0)$ and $-V_{ttt}(0)$ is the same. By \eqref{norms_u}, we get that this sign is the same as the one of
$$
\left[\frac{d}{dt} \int |Q^{\gamma}|^{p+1}\right]_{\restriction t=0}.
$$
Hence, $F_{tt}(0)<0$, which shows that $F(t)$ is negative for small $t\neq 0$, thus, completing the proof.

If $\gamma<0$, one shows by a very close proof to the above that $Q^{\gamma}(t_0+t,x)$ satisfies the assumptions \eqref{cond_1}, \eqref{cond_Bup_1} and \eqref{cond_Bup_2} for small positive $t_0$, implying the blow-up result and concluding the proof of Corollary \ref{C:ground_states}.

\end{proof}

\section{Scattering} \label{S:Scattering}
In this section, we show that the bound from above \eqref{good_bound}, obtained in the previous section for the boundedness part of Theorem \ref{T:BB}, implies scattering of the solution.
Subsection \ref{SS:crit} is devoted to the energy-critical case, and subsection \ref{SS:subcrit} to the energy-subcritical case. Proofs rely on a compactness-rigidity argument of the type initiated in \cite{KM}. A refinement of this argument is necessary since smallness of the $L^{p+1}$ norm of the initial data does not insure global well-posedness and scattering of the corresponding solution.

\subsection{Energy-critical case}
\label{SS:crit}
Recall the NLS equation \eqref{E:NLS} when $s_c=1$ or \eqref{E:p-crit}, i.e., in dimension $N\geq 3$ we have
\begin{equation}
 \label{NLScrit}
i\partial_t u+\Delta u+|u|^{\frac{4}{N-2}}u=0,\quad u_{\restriction t=0}=u_0\in \dot{H}^1(\cR^N).
\end{equation}
In this part we show the scattering result of Theorem \ref{T:BB}, namely,
\begin{theorem}
\label{T:crit_scatt1}
Let $u$ be a solution of \eqref{NLScrit} with maximal time of existence $T_+(u)$, and assume
\begin{equation}
\label{limsup}
\limsup_{t\to T_+(u)} \int_{\cR^N} |u(x,t)|^{\frac{2N}{N-2}}\,dx<\int |W(x)|^{\frac{2N}{N-2}}\,dx.
\end{equation}
Assume furthermore that $u$ is radial if $N=3,4$.
Then $T_+(u)=+\infty$ and $u$ scatters forward in time.
\end{theorem}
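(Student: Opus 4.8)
The plan is to run a concentration--compactness and rigidity argument in the style of Kenig--Merle \cite{KM}, the only genuinely new point being to extract coercivity of the virial from the one-sided control \eqref{limsup} on the $L^{p+1}$ norm rather than from a bound on $\int |\nabla u|^2$. Throughout I write $b=\int |W|^{p+1}=\int |\nabla W|^2$ and recall $E[W]=\frac1N b$, together with the pointwise-in-time identity $\int |\nabla u|^2 = 2E[u]+\frac{N-2}{N}\int |u|^{p+1}$ coming directly from the definition of the energy. The first step is a reduction: I claim that any solution satisfying \eqref{limsup} which does \emph{not} scatter forward in time must obey $E[u]\geq E[W]$. Indeed, \eqref{limsup} provides $\delta>0$ and $t_1$ with $\int |u(t)|^{p+1}\leq b-\delta$ for $t\in[t_1,T_+(u))$; if moreover $E[u]<E[W]=\frac1N b$, then for such $t$ the energy identity gives $\int |\nabla u(t)|^2 < \frac{2}{N}b+\frac{N-2}{N}(b-\delta)=b-\frac{N-2}{N}\delta<\int|\nabla W|^2$. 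Thus $u(t_1)$ lies strictly below the gradient threshold while $\mathcal E<1$, and by \eqref{claim1} and Theorem \ref{T:KMD} (using that $u$ is radial when $N=3,4$) the solution started at $u(t_1)$ is global and scatters, a contradiction. So a counterexample to the theorem would necessarily have $E[u]\geq E[W]$.

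Assuming, for contradiction, that the set of non-scattering solutions obeying \eqref{limsup} is nonempty, I would next produce a critical element. Using a linear profile decomposition together with the stability/perturbation theory for \eqref{NLScrit} (and the radial reduction of \cite{KM} when $N=3,4$ to dispense with spatial translations), one obtains a minimal non-scattering solution $v$ whose trajectory $\{v(t)\}$ is precompact in $\dot H^1$ modulo the scaling symmetry. Since $\int|\nabla v(t)|^2$ and $\int |v(t)|^{p+1}$ are scale invariant, precompactness makes both bounded on the maximal interval, and in fact forces the uniform bound $\int |v(t)|^{p+1}\leq b-\delta$ for all $t$. Because $v$ is itself a non-scattering solution obeying \eqref{limsup}, the reduction of the first paragraph applies to $v$ directly and yields $E[v]\geq E[W]=\frac1N b$.

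The decisive observation is that these two facts already give coercivity of the virial, bypassing the missing reverse implication in \eqref{claim1}. Writing \eqref{V''} in the energy-critical case as $V_{tt}=8\bigl(\int|\nabla v|^2-\int|v|^{p+1}\bigr)=16E[v]-\frac{16}{N}\int |v|^{p+1}$ (via \eqref{V''2} and the energy identity), the lower bound on the energy and the upper bound on the $L^{p+1}$ norm give, uniformly in time, $V_{tt}(t)\geq 16E[W]-\frac{16}{N}(b-\delta)=\frac{16}{N}\bigl(b-(b-\delta)\bigr)=\frac{16\delta}{N}>0$. With this uniform convexity in hand the rigidity step is standard: applying the truncated virial $V_R(t)=\int \chi(x/R)|x|^2|v|^2$ and using precompactness to render the tail terms negligible for $R$ large relative to the bounded concentration scale, one gets $V_R''(t)\geq \frac{8\delta}{N}$ for all $t$, while $|V_R'(t)|$ remains bounded; integrating over a long time interval is then impossible unless $v\equiv 0$. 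This contradicts the non-triviality of the critical element, so no non-scattering solution obeying \eqref{limsup} exists, which proves both $T_+(u)=+\infty$ and forward scattering.

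The hard part is precisely the coercivity of $V_{tt}$: smallness of $\int|u|^{p+1}$ alone does \emph{not} bound $\int|\nabla u|^2$ below $\int|\nabla W|^2$, so the usual passage through \eqref{claim1} and Theorem \ref{T:KMD} is unavailable and one cannot assume $\mathcal E\leq 1$. The resolution above---that failure to scatter forces $E[u]\geq E[W]$, and that this, combined with the $L^{p+1}$ bound, makes $V_{tt}=16E-\frac{16}{N}\int|u|^{p+1}$ strictly positive---is the heart of the refinement announced at the start of Section \ref{S:Scattering}. The remaining burden is the construction of the critical element, namely the profile decomposition and long-time perturbation theory, which are by now routine for \eqref{NLScrit} in the radial and high-dimensional regimes allowed by the hypotheses.
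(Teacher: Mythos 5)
Your overall architecture is the paper's (a Kenig--Merle compactness--rigidity scheme, as in Theorem \ref{T:crit_scatt2} and Propositions \ref{P:compactness}--\ref{P:rigidity}), but your coercivity mechanism is genuinely different, and it is correct as far as it goes. The paper's rigidity rests on the purely variational Claim \ref{C:variational}: by the sharp Sobolev inequality, $a\leq \int |f|^{\frac{2N}{N-2}}\leq A<\int |W|^{\frac{2N}{N-2}}$ already forces $\int|\nabla f|^2-\int|f|^{\frac{2N}{N-2}}\geq \eps_0(a,A)$, the lower bound $a>0$ coming from compactness of the orbit. You instead prove $E[v]\geq E[W]$ for any non-scattering solution below the $L^{p+1}$ threshold, via the energy identity and the known below-threshold Theorem \ref{T:KMD}, and then read coercivity off $V_{tt}=16E-\frac{16}{N}\int|v|^{p+1}\geq \frac{16\delta}{N}$. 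Your arithmetic checks out, and the reduction parallels what the paper itself asserts at \eqref{good_energy} ($E[u_c]=E_c\geq E[W]$); your route dispenses with the lower bound $a$ but imports Theorem \ref{T:KMD} as a black box, which is legitimate. One small repair: when $E[u]\leq 0$ your appeal to Theorem \ref{T:KMD} (stated for $0<\mathcal{E}<1$) is out of range; this case is vacuous because, by the Sobolev argument of Claim \ref{C:variational}, $0<\int|u|^{\frac{2N}{N-2}}<\int|W|^{\frac{2N}{N-2}}$ already implies $E[u]>0$, but you should say so.

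The genuine gaps are in the rigidity step, which you declare standard once $V_{tt}\geq c>0$ --- precisely where the paper spends most of its effort. First, your contradiction integrates $V_R''\geq c$ over an arbitrarily long time interval, tacitly assuming $T_+(v)=+\infty$; but the critical element may a priori blow up in finite time (and this scenario must be excluded, since $T_+(u)=+\infty$ is part of the theorem's conclusion). The paper's Part 1 of Proposition \ref{P:rigidity} handles it by a separate localized-mass argument giving $u_0\in L^2$ with $M[u_0]\leq C|T_+(u)-s|$, hence $u_0=0$; nothing in your sketch covers this. Second, the phrase ``$R$ large relative to the bounded concentration scale'' assumes control of the modulation parameters, which precompactness of \eqref{defK} modulo symmetries does \emph{not} provide: if $\lambda(t)\to 0$ the solution spreads and the tail terms in the truncated virial are not uniformly small for any fixed $R$, and for $N\geq 5$ nonradial $x(t)$ may diverge. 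The paper's fix is substantial: for radial $N=3,4$, a renormalized critical element with $\inf_t\tilde{\lambda}(t)>0$ (following the proof of Theorem 5.1 in \cite{KM}); for $N\geq 5$, the Killip--Visan reductions to the scenarios \eqref{soliton}--\eqref{cascade}, the deep fact $u_0\in L^2$ (their Theorem 6.1), exclusion of the cascade (Theorem 7.1), a Galilean boost to zero momentum giving $x(t)/t\to 0$, and only then the virial with $R=C_0+\max_{0\leq t\leq T}|x(t)|$. Your uniform convexity would slot neatly into that machinery in place of Claim \ref{C:variational}, but it does not replace it. (Minor points: the bound $\int|v(t)|^{p+1}\leq b-\delta$ holds by construction of the constrained class, as in the definition of $\cS(E_0,A)$, not because precompactness ``forces'' it; and a minimal element needs a specified minimized quantity --- the paper minimizes the energy at fixed $L^{p+1}$ constraint, where positivity of the energies of nonzero profiles, via Claim \ref{C:variational}, is the crucial nonstandard ingredient in the compactness step.)
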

If $I$ is a real interval, we define
$$
S(I)=L^{\frac{2(N+2)}{N-2}}(I\times \cR^N),
$$
noting that the pair $(\tfrac{2(N+2)}{N-2}, \frac{2(N+2)}{N-2})$ is $\dot{H}^1$-admissible.
Recall (see e.g. Cazenave's book \cite{bookC})  that if $u$ is a solution of \eqref{NLScrit} such that $\|u\|_{S(0,T_+(u))}<\infty$, then $T_+(u)=+\infty$ and $u$ scatters forward in time.

If $A>0$, $E_0\in \cR$, we let $\cS(E_0,A)$ be the supremum of $\|u\|_{S(I)}$, where $I$ is a real interval, and $u$ a solution of \eqref{NLScrit} on $I\times \cR^N$ such that
\begin{gather}
\label{defS1} E[u]\leq E_0\\
\label{defS2} \sup_{t\in I} \int |u(x,t)|^{\frac{2N}{N-2}}\leq A\\
\label{defS3} \text{ if }N=3,4, \; u \text{ is radial}.
\end{gather}
We deduce Theorem \ref{T:crit_scatt1} from a slightly stronger result:
\begin{theorem}
 \label{T:crit_scatt2}
Assume that $0<A<\int |W|^{\frac{2N}{N-2}}$ and $E_0\in \cR$. Then $\cS(E_0,A)$ is finite.
\end{theorem}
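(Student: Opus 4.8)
The plan is to run a concentration--compactness/rigidity argument in the spirit of Kenig--Merle \cite{KM}, but with the size restriction placed on the potential energy $\int|u|^{\frac{2N}{N-2}}$ rather than on $\int|\nabla u|^2$. Fix $A<\int|W|^{\frac{2N}{N-2}}$ and suppose, for contradiction, that $\mathcal{S}(E_0,A)=+\infty$ for some $E_0$. Since $E\mapsto \mathcal{S}(E,A)$ is nondecreasing and the constraints \eqref{defS1}--\eqref{defS2} force $\int|\nabla u|^2\le 2E+\tfrac{N-2}{N}A$, small-data scattering theory (phrased through smallness of the \emph{free} Strichartz norm, since the $\dot H^1$ norm is controlled uniformly on the class) shows $\mathcal{S}(E,A)<\infty$ for $E$ sufficiently negative; hence $E_c:=\inf\{E:\mathcal{S}(E,A)=+\infty\}$ is a finite real number, and the goal becomes to derive a contradiction at the level $E_c$.

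First I would extract a critical element. Take solutions $u_n$ on intervals $I_n\ni 0$ with $E[u_n]\to E_c$, $\sup_t\int|u_n|^{\frac{2N}{N-2}}\le A$, radial if $N=3,4$, and $\|u_n\|_{S(I_n)}\to\infty$. The data $u_n(0)$ are bounded in $\dot H^1$, so I apply the $\dot H^1$ linear profile decomposition, together with the Pythagorean expansions of the $\dot H^1$ norm, the energy, and $\int|\cdot|^{\frac{2N}{N-2}}$. A standard argument, using only free-Strichartz smallness of the remainder and the perturbation/stability theory for \eqref{NLScrit}, isolates a single surviving profile and produces a critical solution $u_c$ with $E[u_c]=E_c$, $\sup_t\int|u_c|^{\frac{2N}{N-2}}\le A$, radial if $N=3,4$, $\|u_c\|_{S(I_{\max})}=+\infty$, and precompact trajectory in $\dot H^1$ modulo scaling (and, for $N\ge 5$, translation). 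The delicate point here, and the precise place where the one-sided implication \eqref{claim1} bites, is to guarantee that each nonlinear profile again lies in the admissible class, i.e.\ that the strict potential bound is inherited at all times so that the induction on $E_c$ applies to the lower-energy profiles; smallness of the potential alone does not control $\int|\nabla\cdot|^2$, so one cannot quote the usual $\dot H^1$-small-data criterion and must argue this propagation separately. I expect this to be the main obstacle.

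The rigidity step is where the hypothesis $A<\int|W|^{\frac{2N}{N-2}}$ enters essentially, and it is clean. Recalling the Pohozaev identity $\int|W|^{\frac{2N}{N-2}}=\int|\nabla W|^2$ and that \eqref{V''} reads $V_{tt}=8(\int|\nabla u_c|^2-\int|u_c|^{\frac{2N}{N-2}})$ in the energy-critical case, I claim a uniform coercivity $\inf_t V_{tt}\ge 2\delta>0$. Set $y(t)=\int|\nabla u_c(t)|^2$; precompactness and scaling-invariance of the $\dot H^1$ norm force $y(t)\in[m,M]$ with $0<m\le M<\infty$ (the bound $m>0$ coming from small-data theory and $\|u_c\|_S=+\infty$). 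Now $\tfrac18 V_{tt}=y-\int|u_c|^{\frac{2N}{N-2}}$ admits two lower bounds: the direct one $\ge y-A$ (from \eqref{defS2}), positive when $y>A$; and, from the sharp inequality \eqref{E:GN} in the form $\int|u_c|^{\frac{2N}{N-2}}\le(\int|\nabla W|^2)^{-\frac{2}{N-2}}\,y^{\frac{N}{N-2}}$, the bound $\ge y\,(1-(y/\int|\nabla W|^2)^{\frac{2}{N-2}})$, positive when $y<\int|\nabla W|^2$. Since $A<\int|\nabla W|^2$, these two regimes cover all $y>0$, so the maximum of the two lower bounds is a positive continuous function of $y$ on the compact set $[m,M]$, whence $V_{tt}\ge 2\delta>0$ uniformly in $t$.

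Finally I would feed this coercivity into a truncated virial $V_R(t)=\int\varphi_R\,|u_c|^2$, centered at the origin in the radial case and at the translation parameter otherwise. By precompactness the truncation errors vanish uniformly, so $V_R''(t)=V_{tt}(t)+o_R(1)\ge\delta$ for $R$ large, while $|V_R'(t)|\le C(R)$ uniformly; integrating over $[0,T]$ gives $\delta T\le 2C(R)$, impossible in the globally defined regime, and combined with the standard finite-time-blow-up-versus-compactness alternative this forces $u_c\equiv 0$, contradicting $\|u_c\|_S=+\infty$. Besides the constraint-propagation issue flagged above, the remaining care is the usual dichotomy between $N=3,4$ (radiality needed to kill spatial translations and to legitimize the centered virial) and $N\ge 5$ (where $W\in L^2$ and the non-radial translation parameter must be handled in the profile decomposition).
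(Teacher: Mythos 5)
Your overall roadmap coincides with the paper's (a Kenig--Merle compactness--rigidity scheme whose variational input is exactly the sharp Sobolev coercivity below the potential-energy threshold; your two-regime lower bound for $\int|\nabla u|^2-\int|u|^{\frac{2N}{N-2}}$ is a kinetic-energy reformulation of the paper's Claim \ref{C:variational}, which is stated uniformly in terms of the potential energy on $[a,A]$ and so does not even need the bounds $[m,M]$ from compactness). But the proposal has two genuine gaps. The first is in the extraction step, and it is double: the point you flag --- that nonlinear profiles must inherit the constraint $\sup_t\int|U^j|^{\frac{2N}{N-2}}\le A$ for the induction on $E_c$ to apply to them --- is real and you leave it unresolved; it can be filled, as in \cite[Proposition 3.1]{KiVi}, via the asymptotic decoupling of the potential energies at each time together with weak lower semicontinuity of the $L^{\frac{2N}{N-2}}$ norm under weak $\dot H^1$ limits (each nonlinear profile at each time of its domain being a weak limit of modulated rescalings/translates of $u_n$). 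More importantly, you omit the ingredient the paper singles out as crucial in Lemma \ref{L:compactness}: by Claim \ref{C:variational} and the Pythagorean expansion of $\int|\cdot|^{\frac{2N}{N-2}}$, \emph{every nonzero profile has strictly positive energy}. Since the energy is not coercive in the focusing problem, without this positivity the energy decoupling $E[u_n]=\sum_jE[\psi^j]+E[w_n^J]+o_n(1)$ does not prevent a single profile from carrying energy above $E_c$, and the "single surviving profile" conclusion --- hence the whole induction on energy at fixed $A$ --- does not close.

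The second gap is in the rigidity step. "By precompactness the truncation errors vanish uniformly" is false as stated: compactness of the set \eqref{defK} is only modulo the scaling parameter $\lambda(t)$, and if $\inf_t\lambda(t)=0$ the tails $\int_{|x|\ge R}\bigl(|\nabla u_c|^2+|x|^{-2}|u_c|^2+|u_c|^{\frac{2N}{N-2}}\bigr)$ are not small uniformly in $t$ for any fixed $R$. This is where the paper does real work: in the radial case $N=3,4$ it reduces to $\inf_t\lambda(t)>0$ by constructing a new critical element (following Theorem 5.1 of \cite{KM}), and for nonradial $N\ge5$ it invokes the Killip--Visan reduction to the soliton/cascade scenarios, proves $u_0\in L^2$ (\cite[Thm 6.1]{KiVi}), and excludes the cascade (\cite[Thm 7.1]{KiVi}). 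Moreover, your treatment of the translation parameter fails: a virial centered at the moving point $x(t)$ is illegitimate, since $x(t)$ is a mere modulation parameter (not differentiable, with uncontrolled increments), and differentiating $\int\varphi_R(x-x(t))|u|^2$ produces terms you cannot bound. The paper instead uses a fixed-center truncated virial with $R=C_0+\max_{0\le t\le T}|x(t)|$; then $R$ grows with $T$, so the inequality $\eps_0 T\lesssim R$ yields a contradiction only after the Galilean normalization to zero momentum (which requires $u_0\in L^2$, hence the $N\ge 5$ machinery) and the property $x(t)=o(t)$ from \cite{DHR}. Your "$\delta T\le 2C(R)$, impossible" conceals precisely these steps.
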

Of course, Theorem \ref{T:crit_scatt2} implies Theorem \ref{T:crit_scatt1}.

Theorem \ref{T:crit_scatt1} is a variant of the scattering part of the main Theorem of \cite{KM} (see also Corollary 5.18). We also refer to Theorem 1.7 of \cite{KiVi}, which states that if $A<\int |\nabla W|^2$ and $\cT(A)=\sup \|u\|_{S(I)}$, where the supremum is taken over all solutions on $I\times \cR^N$ such that $\sup_{t\in I} \int |\nabla u(t)|^2\leq A$, then $\cT(A)<\infty$. Note that by the critical Sobolev embedding,
$$
\int|\nabla u|^2<\int |\nabla W|^2\Longrightarrow \int |u|^{\frac{2N}{N-2}}< \int |W|^{\frac{2N}{N-2}},
$$
which shows that Theorem \ref{T:crit_scatt2} is slightly stronger than Theorem 1.7 of \cite{KiVi}.

The proof of Theorem \ref{T:crit_scatt2} follows the general strategy initiated in \cite{KM}, and is very close to the proof of \cite{KM}, with the extra argument given in \cite{KiVi} to deal with nonradial solutions in dimension $N\geq 5$. We only sketch the proof, highlighting the differences. We start by a purely variational result:
\begin{claim}
 \label{C:variational}
Let $a, A\in \cR$ such that $0<a<A<\int |W|^{\frac{2N}{N-2}}$. Then there exists $\eps_0=\eps_0(a,A)>0$  such that for all $f\in \dot{H}^1(\cR^N)$ with
$$
a \leq \int |f|^{\frac{2N}{N-2}}\leq A
$$
one has
\begin{align}
 \label{coerc}
\int |\nabla f|^2-\int |f|^{\frac{2N}{N-2}}\geq \eps_0\\
\label{positive_E}
E[f]\geq  \frac{\eps_0}{2}.
\end{align}
\end{claim}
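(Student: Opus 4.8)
The plan is to derive both \eqref{coerc} and \eqref{positive_E} from a single one-variable inequality produced by the sharp Sobolev inequality, exploiting the fact that its extremizer is $W$.

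First I would specialize the sharp Gagliardo--Nirenberg inequality \eqref{E:sharpGN} to the critical exponent $s_c=1$, where $p+1=\frac{2N}{N-2}$, $\frac{4}{N(p-1)}=\frac{N-2}{N}$ and $\kappa=0$, so the mass factor drops out and \eqref{E:sharpGN} becomes the pure Sobolev inequality $\big(\int|f|^{\frac{2N}{N-2}}\big)^{\frac{N-2}{N}}\le c_Q\int|\nabla f|^2$. By \eqref{E:kappa_cQ} with $Q=W$, together with the Pohozaev identity \eqref{E:Pohozaev}, which in this case reads $\int|\nabla W|^2=\int|W|^{\frac{2N}{N-2}}$, the constant simplifies to $c_Q=\big(\int|W|^{\frac{2N}{N-2}}\big)^{-2/N}$. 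Writing $K=\int|W|^{\frac{2N}{N-2}}$ and $y=\int|f|^{\frac{2N}{N-2}}$, this gives, for every $f\in\dot H^1$,
\[
\int|\nabla f|^2-\int|f|^{\frac{2N}{N-2}}\ \ge\ h(y),\qquad h(y):=K^{2/N}\,y^{\frac{N-2}{N}}-y.
\]

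Next I would analyze the scalar function $h$ on $[0,K]$. It is strictly concave, since $h''(y)=-\tfrac{2(N-2)}{N^2}K^{2/N}y^{\frac{N-2}{N}-2}<0$, and it vanishes at both endpoints: $h(0)=0$, and $h(K)=K^{2/N}K^{\frac{N-2}{N}}-K=K-K=0$, the latter being precisely the point where Sobolev is saturated. Hence $h>0$ on the open interval $(0,K)$. Since $0<a<A<K$ by hypothesis, the compact set $[a,A]$ lies inside $(0,K)$, so $\eps_0:=\min_{y\in[a,A]}h(y)$ is attained and strictly positive; this is exactly the $\eps_0=\eps_0(a,A)$ the statement asks for. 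For any $f$ with $a\le y\le A$ the displayed inequality then yields $\int|\nabla f|^2-\int|f|^{\frac{2N}{N-2}}\ge h(y)\ge\eps_0$, which is \eqref{coerc}.

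Finally, \eqref{positive_E} follows by inserting \eqref{coerc} into the energy. Since $\frac1{p+1}=\frac{N-2}{2N}$ and $\frac12-\frac{N-2}{2N}=\frac1N>0$, I would estimate
\[
E[f]=\tfrac12\!\int|\nabla f|^2-\tfrac{N-2}{2N}\!\int|f|^{\frac{2N}{N-2}}\ \ge\ \tfrac12\big(y+\eps_0\big)-\tfrac{N-2}{2N}\,y=\tfrac{\eps_0}{2}+\tfrac1N\,y\ \ge\ \tfrac{\eps_0}{2},
\]
using $y\ge a>0$ in the last step. The only point that requires genuine care is identifying the right endpoint of positivity of $h$ with the Sobolev threshold $K=\int|W|^{\frac{2N}{N-2}}$ via \eqref{E:Pohozaev}; this is exactly what makes the strict hypothesis $A<\int|W|^{\frac{2N}{N-2}}$ produce a strictly positive gap $\eps_0$. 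Everything else is elementary one-variable calculus, and no compactness or concentration argument is needed at this purely variational stage.
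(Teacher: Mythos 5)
Your proof is correct and follows essentially the same route as the paper: both apply the sharp Sobolev inequality with the constant identified through $W$ and the Pohozaev identity $\int |\nabla W|^2=\int |W|^{\frac{2N}{N-2}}$, reduce \eqref{coerc} to the positivity of the one-variable function $y\mapsto \left(\int |W|^{\frac{2N}{N-2}}\right)^{2/N}y^{\frac{N-2}{N}}-y$ on $\left(0,\int |W|^{\frac{2N}{N-2}}\right)$ together with compactness of $[a,A]$, and then deduce \eqref{positive_E} immediately from \eqref{coerc} using $\frac12-\frac{N-2}{2N}=\frac1N>0$. Your concavity computation is a harmless extra (positivity of $h$ is already evident from factoring $h(y)=y^{\frac{N-2}{N}}\bigl(K^{2/N}-y^{2/N}\bigr)$), but the argument is complete as written.
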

\begin{proof}
By Sobolev inequality
$$
\int |\nabla f|^2-\int |f|^{\frac{2N}{N-2}}\geq \left(\int |f|^{\frac{2N}{N-2}}\right)^{\frac{N-2}{N}}\left(\int |W|^{\frac{2N}{N-2}}\right)^{\frac{2}{N}}-\int |f|^{\frac{2N}{N-2}}.
$$
Observing that the function
$$
\varphi:y \mapsto \left(\int |W|^{\frac{2N}{N-2}}\right)^{\frac{2}{N}} y^{\frac{N-2}{N}}-y
$$
is continuous and strictly positive on $\left(0,\int |W|^{\frac{2N}{N-2}}\right)$, we get \eqref{coerc}. The inequality \eqref{positive_E} is an immediate consequence of \eqref{coerc}.
\end{proof}
We divide the proof of Theorem \ref{T:crit_scatt2} into two propositions.
\begin{proposition}
 \label{P:compactness}
Assume that there exists $E_0\in \cR$ and a positive number $A<\int |W|^{\frac{2N}{N-2}}$ such that $\cS(E_0,A)=+\infty$. Then there exists a solution $u_c$ of \eqref{NLScrit} with maximal interval of existence $I_{\max}$, and functions $t\mapsto \lambda(t)\in (0,+\infty)$ and $t\mapsto x(t)\in \cR^N$, defined on $I_{\max}$ such that
\begin{equation}
\label{defK}
K=\Big\{ \frac{1}{\lambda(t)^{\frac{N-2}{2}}} ~ u_c\left(\frac{x-x(t)}{\lambda(t)},t\right),\; t\in I_{\max}\Big\}
\end{equation}
has compact closure in $\dot{H}^1(\cR^N)$ and satisfies
$\sup_{t\in I_{\max}} \int_{\cR^N} |u_c(t)|^{\frac{2N}{N-2}}\leq A$
and $E[u_c]>0$.

If $N=3,4$, then one can assume that $u_c$ is radial and $x(t)=0$ for all $t$.
\end{proposition}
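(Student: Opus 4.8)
The plan is to run the concentration--compactness extraction of a minimal non-scattering solution in the spirit of \cite{KM}, with the essential modification that subcriticality is measured by the $L^{\frac{2N}{N-2}}$ norm rather than by $\|\grad u\|_{L^2}$. Fix $A<\int|W|^{\frac{2N}{N-2}}$ and set
$$
E_c=\inf\{E_0:\ \cS(E_0,A)=+\infty\}.
$$
By hypothesis this set is nonempty, so $E_c<+\infty$. First I would record that the constraints \eqref{defS1}--\eqref{defS2} force an a priori bound in $\dot{H}^1$: since $E[u]=\tfrac12\int|\grad u|^2-\tfrac{N-2}{2N}\int|u|^{\frac{2N}{N-2}}$, every $u$ in the admissible family satisfies $\int|\grad u|^2\le 2E_0+\tfrac{N-2}{N}A$. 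Moreover Claim \ref{C:variational} shows $E[u]\ge \eps_0/2>0$ as soon as $\int|u|^{\frac{2N}{N-2}}$ is bounded away from $0$; conversely $\int|u|^{\frac{2N}{N-2}}\to0$ forces $\int|\grad u|^2\to0$ and hence scattering by small data theory. In particular $E_c>0$, and by definition of $E_c$, any admissible solution with energy strictly below $E_c$ scatters in both time directions.

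Next I would take a sequence of solutions $u_n$ of \eqref{NLScrit} satisfying \eqref{defS1}--\eqref{defS3} with $E[u_n]\to E_c$ and $\|u_n\|_{S(I_n)}\to+\infty$, centering $I_n$ so that the $S$-norm blows up (in the limit) on both sides of $t=0$. Applying the $\dot{H}^1$ linear profile decomposition (as in \cite{KM,KiVi}) to the bounded sequence $u_n(0)$, I would write $u_n(0)=\sum_{j=1}^{J}\phi^j_n+w_n^J$, where $\phi^j_n=\lam_{j,n}^{-\frac{N-2}{2}}\,\big(e^{it_{j,n}\lap}V^j\big)\big(\tfrac{x-x_{j,n}}{\lam_{j,n}}\big)$, with pairwise orthogonality of the parameters and $\limsup_J\limsup_n\|e^{it\lap}w_n^J\|_{S(\cR)}=0$. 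The crucial point is that both the energy and the $L^{\frac{2N}{N-2}}$ norm decouple: $\sum_j E[\phi^j_n]+E[w_n^J]=E[u_n]+o(1)$ and $\sum_j\int|\phi^j_n|^{\frac{2N}{N-2}}=\int|u_n(0)|^{\frac{2N}{N-2}}+o(1)\le A+o(1)$. Each profile therefore inherits the constraint $\int|\cdot|^{\frac{2N}{N-2}}\le A$, so by Claim \ref{C:variational} every profile has nonnegative energy, whence $\sum_j E_j\le E_c$.

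From here the argument is the standard minimality dichotomy. Since $\sum_j E_j\le E_c$ with all $E_j\ge0$, only finitely many profiles have $E_j\ge\de$ for any fixed $\de$; the low-energy profiles have small $L^{\frac{2N}{N-2}}$ norm (the contrapositive of Claim \ref{C:variational}), hence small $\dot{H}^1$ norm, and their nonlinear profiles scatter by small data theory, as do all profiles with $t_{j,n}\to\pm\infty$. If every one of the finitely many remaining nonlinear profiles also scattered, feeding them together with the negligible remainder $w_n^J$ into the long-time perturbation lemma would bound $\|u_n\|_{S(I_n)}$, contradicting $\|u_n\|_{S(I_n)}\to+\infty$. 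Hence some nonlinear profile $U^{j_0}$ does not scatter; as it satisfies the admissibility constraints and $E_{j_0}\le E_c$, the definition of $E_c$ forces $E_{j_0}=E_c$. By energy decoupling all other profiles and $w_n^J$ then carry zero energy, and by the positivity of Claim \ref{C:variational} they vanish in $\dot{H}^1$. Undoing the symmetries produces $u_c$ with $E[u_c]=E_c>0$, $\sup_t\int|u_c|^{\frac{2N}{N-2}}\le A$, and $\|u_c\|_{S(I_{\max})}=+\infty$.

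Finally I would establish the precompactness of $K$ in \eqref{defK}. Given any sequence $\tau_n\in I_{\max}$, apply the same profile decomposition to $u_c(\tau_n)$; the minimality of $E_c$ and the non-scattering of $u_c$ again force a single profile with vanishing remainder, which is precisely the statement that $\{u_c(\tau_n)\}$ is precompact modulo the scaling $\lam(\tau_n)$ and translation $x(\tau_n)$ read off from that profile. For $N=3,4$ one takes the decomposition radial, so $x(t)\equiv0$; for non-radial $N\ge5$ I would invoke the additional argument of \cite{KiVi} to control the translation parameters and rule out their escape. I expect the main obstacle to be exactly the issue flagged before the statement: because the hypothesis bounds only $\int|u|^{\frac{2N}{N-2}}$ and not $\int|\grad u|^2$, the subcriticality of the profiles and the scattering of the low-energy ones cannot be read off directly from a gradient bound below the $W$-threshold as in \cite{KM}; it must be extracted from the interplay of the energy bound with the variational trapping of Claim \ref{C:variational}, and one must verify that this trapping survives the profile decomposition so that every profile genuinely remains below the threshold.
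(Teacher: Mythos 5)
Your proposal follows the paper's proof in all essentials: both define $E_c=\inf\{E_0:\ \cS(E_0,A)=+\infty\}$, both show $E_c$ is well defined and positive by variational trapping plus small data theory, both run Keraani's $\dot{H}^1$ profile decomposition on a minimizing sequence with the key new ingredient being that Claim \ref{C:variational} forces every nonzero profile carrying the inherited $L^{\frac{2N}{N-2}}$ constraint to have (strictly) positive energy, both conclude by minimality of $E_c$ that a single profile survives with vanishing remainder, and both obtain precompactness of $K$ by rerunning the same extraction on the time slices $u_c(\tau_n)$. The paper packages both applications into Lemma \ref{L:compactness}, whose proof it also omits (citing \cite{KM} and Proposition 3.1 of \cite{KiVi}), and it invokes a lifting lemma (\cite{DHR}, Appendix A) to pass from sequential compactness to the functions $\lambda(t)$, $x(t)$; note also that for nonradial $N\geq 5$ no ``escape'' of $x(t)$ needs to be ruled out at this stage --- that is done later, in the rigidity Proposition \ref{P:rigidity}, not here.

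Two concrete caveats. First, your sentence ``$\int|u|^{\frac{2N}{N-2}}\to 0$ forces $\int|\nabla u|^2\to 0$'' is false as a standalone implication: since $\int|\nabla u|^2 = 2E[u]+\frac{N-2}{N}\int |u|^{\frac{2N}{N-2}}$, a highly oscillatory datum of unit gradient norm has negligible $L^{\frac{2N}{N-2}}$ norm while its gradient stays of size $2E$. In your context the conclusion is rescued because you have already restricted to $E[u]\leq E_0$ with $E_0$ small: small energy together with the small $L^{\frac{2N}{N-2}}$ norm produced by the contrapositive of Claim \ref{C:variational} yields the small gradient through the displayed identity. This repaired argument is a direct substitute for the paper's Step 1, which instead goes through Claim \ref{Cl:variational} and Claim 2.6 of \cite{DM}. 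Second, your mid-dichotomy assertion that the non-scattering nonlinear profile $U^{j_0}$ ``satisfies the admissibility constraints'' is exactly the delicate point of this proposition: the decoupling you invoke holds at $t=0$, whereas the constraint \eqref{defS2} is a supremum over the lifespan, and the $L^{\frac{2N}{N-2}}$ norm is not conserved by the flow. One must transfer the bound $\leq A$ to every time in the lifespan of $U^{j_0}$ via the nonlinear approximation of $u_n$ by the evolved profiles (decoupling at the matched times, all terms being nonnegative), as in Proposition 3.1 of \cite{KiVi}. You flag this at the end rather than proving it --- which leaves your write-up at the same level of completeness as the paper itself, since this verification is precisely the content of the omitted proof of Lemma \ref{L:compactness}.
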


\begin{proposition}
 \label{P:rigidity}
There exist no solution $u_c$ of \eqref{NLScrit} satisfying the conclusion of Proposition \ref{P:compactness}.
\end{proposition}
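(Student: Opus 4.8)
The plan is to run the concentration-compactness/rigidity scheme of Kenig--Merle in reverse: assuming a critical element $u_c$ as in Proposition \ref{P:compactness} exists, I will derive a contradiction from a localized virial (truncated variance) estimate combined with the coercivity furnished by Claim \ref{C:variational} and the precompactness of the family $K$. The first step is to convert the hypotheses into a uniform spectral gap. Since $E[u_c]>0$ and $K$ is precompact in $\dot H^1$, the quantity $\int |u_c(t)|^{2N/(N-2)}$ is bounded below by some $a>0$ uniformly in $t\in I_{\max}$: otherwise a rescaled, recentered sequence would converge in $\dot H^1$ to a function with vanishing $L^{2N/(N-2)}$ norm, hence to $0$, forcing $\|u_c(t_n)\|_{\dot H^1}\to 0$ and $E[u_c]=0$. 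As by hypothesis the norm is also bounded above by $A<\int|W|^{2N/(N-2)}$, Claim \ref{C:variational} applies and yields
\begin{equation*}
\int |\nabla u_c(t)|^2-\int |u_c(t)|^{2N/(N-2)}\geq \eps_0>0,\qquad t\in I_{\max}.
\end{equation*}
Since $\frac{4N(p-1)}{p+1}=8$ when $s_c=1$, this says precisely that the (formal) virial quantity $8\big(\int|\nabla u_c|^2-\int|u_c|^{2N/(N-2)}\big)$ stays $\geq 8\eps_0$ along the flow. This is the crucial point where working with the $L^{2N/(N-2)}$ bound $A<\int|W|^{2N/(N-2)}$ --- rather than with a gradient bound as in \cite{KM,KiVi} --- is handled: the variational Claim \ref{C:variational} recovers the coercivity from the $L^{p+1}$ threshold alone.

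Because $u_c$ need not lie in $L^2$ when $N\le 4$, I cannot use the genuine variance $\int|x|^2|u_c|^2$; instead I introduce a truncated version $z_R(t)=\int \phi_R(x)\,|u_c(x,t)|^2\,dx$, where $\phi_R(x)=R^2\phi(x/R)$ with $\phi$ smooth, radial, equal to $|x|^2$ for $|x|\le 1$ and constant for large $|x|$. Differentiating twice and using \eqref{NLScrit}, $z_R''(t)$ equals the virial quantity above plus error terms supported on $|x|\gtrsim R$. Precompactness of $K$ provides, for each $\eta>0$, a radius $\rho(\eta)$ with $\int_{|x-x(t)|\ge \rho(\eta)/\lambda(t)}\big(|\nabla u_c|^2+|u_c|^{2N/(N-2)}\big)\le\eta$ uniformly in $t$; once $R$ is large relative to the concentration scale, these errors are absorbed and $z_R''(t)\ge 4\eps_0$.

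The argument then splits on the maximal interval. If $u_c$ is global, say $T_+(u_c)=+\infty$, I first invoke the standard control of the modulation parameters for a compact-flow solution (as in \cite{KM} for $N=3,4$ and \cite{KiVi} for the nonradial case $N\ge 5$), which lets a single fixed $R$ serve for all $t\ge 0$. Integrating $z_R''\ge 4\eps_0$ twice gives $z_R'(t)\gtrsim t$, while Cauchy--Schwarz together with Hölder on the ball $\{|x|\le R\}$ gives $|z_R'(t)|\lesssim R\,\|\nabla u_c(t)\|_{L^2}\big(\int_{|x|\le R}|u_c|^2\big)^{1/2}\lesssim R^2$ uniformly in $t$; letting $t\to\infty$ is a contradiction. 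The finite-time case $T_+(u_c)<\infty$ is excluded by the standard argument of \cite{KM,KiVi}: precompactness together with $T_+<\infty$ forces the concentration scale $\lambda(t)\to\infty$, and a local-mass argument then forces $u_c$ to vanish, contradicting $E[u_c]>0$.

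I expect the main obstacle to be twofold. First, the error control in $z_R''$ is not automatic: one must convert precompactness in $\dot H^1$ into uniform smallness of the tails \emph{at the moving, rescaled window} determined by $x(t),\lambda(t)$, and match it to a \emph{fixed} cutoff scale $R$; this is where the control of $\lambda(t)$ and especially of the translation $x(t)$ in the nonradial case $N\ge 5$ (following \cite{KiVi}, via a momentum/center-of-mass correction) is essential and most delicate. Second, excluding finite-time blow-up requires the local-mass argument specific to the energy-critical, possibly non-$L^2$, setting. Apart from these two points the scheme is exactly that of \cite{KM}, which is why only the differences need to be detailed.
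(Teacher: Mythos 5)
Your overall route is the paper's own: compactness of $K$ plus $E[u_c]>0$ yields a uniform lower bound $a>0$ on $\int|u_c(t)|^{2N/(N-2)}$, the coercivity of Claim \ref{C:variational} then gives $\int|\nabla u_c(t)|^2-\int|u_c(t)|^{2N/(N-2)}\geq\eps_0$ uniformly in $t$ (this is indeed the only genuinely new ingredient relative to \cite{KM,KiVi}), one runs a truncated virial estimate $z_R''\geq c\,\eps_0$ with the tail errors absorbed by precompactness, and finite-time blow-up is excluded by the localized-mass argument — the paper's Part 1 is exactly your sketch: degenerating parameters force $M_R(t_n)\to 0$ along a sequence, the Lipschitz bound $|M_R'(t)|\leq C$ gives $M_R(s)\leq C|T_+-s|$, hence $u_0=0$, contradicting $E[u_c]>0$. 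Your treatment of the radial global case, including deferring to the modulation control of \cite{KM} to reduce to $\inf_t\lambda(t)>0$ (the paper's auxiliary solution $\tilde u_c$), also matches, and your Hölder bound $|z_R'|\lesssim R^2$ is the paper's \eqref{eq18}.

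The step that fails as written is the global nonradial case $N\geq 5$: no ``single fixed $R$'' can serve for all $t\geq 0$, because the translation $x(t)$ need not be bounded — even after the Galilean transform killing the momentum one only obtains $\lim_{t\to\infty}x(t)/t=0$ (the paper's \eqref{eq23}), not boundedness, and with $|x(t)|\to\infty$ the tail term $(B)$ in \eqref{bnd_zR''}, centered at the origin, eventually captures the whole solution and cannot be made small. The paper's repair requires three independent inputs from \cite{KiVi} that your sketch omits: the reduction to the scenarios \eqref{soliton} or \eqref{cascade} (Section 4 of \cite{KiVi}); the proof that $u_0\in L^2$ (Theorem 6.1) together with the exclusion of the cascade scenario (Theorem 7.1); and only then the virial with the $T$-dependent radius $R=C_0+\max_{0\leq t\leq T}|x(t)|$ and the \emph{linear} bound $|z_R'(t)|\leq CR$ of \eqref{eq24}, which is available precisely because $u\in H^1$. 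Note that your quadratic bound $|z_R'|\lesssim R^2$ is useless once $R$ must grow with $T$: from $R\sim\max_{0\leq t\leq T}|x(t)|=o(T)$ one would only get $7\eps_0T\leq CR^2=o(T^2)$, which is no contradiction; the finite mass is what improves this to $CR=o(T)$. So the ``momentum/center-of-mass correction'' you flag as delicate is not the only missing piece — the $L^2$-membership of $u_c$ and the exclusion of the frequency cascade are essential and separate steps, without which the virial bookkeeping in this case does not close.
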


\begin{proof}[Sketch of proof of Proposition \ref{P:compactness}]

\noindent\emph{Step 1.} We first notice that by purely variational arguments and the small data theory, if $A<\int |W|^{\frac{2N}{N-2}}$ and $E_0>0$ is small, then $\cS(E_0,A)$ is finite.
Indeed, if $u$ is a solution of  \eqref{NLScrit} such that $E[u]\leq E_0<E[W]$ and $\int |u_0|^{\frac{2N}{N-2}}<\int |W|^{\frac{2N}{N-2}}$, then by Claim \ref{Cl:variational},
$\int |\nabla u_0|^2<\int |\nabla W|^2$, which, combined with the inequality $E[u]\leq E_0<E[W]$, implies, by Claim 2.6 of \cite{DM} that
$\int |\nabla u_0|^2\leq N E_0$, and the fact that $\cS(E_0,A)$ is finite follows from the small data theory.

\noindent\emph{Step 2.} We next construct the critical element $u_c$. Let $A<\int |W|^{\frac{2N}{N-2}}$ and assume that $\cS(E_0,A)=+\infty$ for some $E_0\in \cR$. Consider
$$
E_c=E_c(A)=\inf \{E_0\in \cR\text{ s.t. } \cS(E_0,A)=+\infty\}.
$$
Note that by the preceding step, $E_c$ is well defined and positive. We will prove the existence of $u_c$ as a consequence of the following lemma, analogous to Proposition 3.1 of \cite{KiVi}:
\begin{lemma}
\label{L:compactness}
Let $I_n=(T_n^-,T_n^+)$ be a sequence of intervals containing $0$. Let $\{u_n\}_n$ be a sequence of solutions of \eqref{NLScrit} on $I_n$, with initial data $u_{0,n}\in \dot{H}^1(\cR^N)$ at $t=0$, such that $u_n$ is radial if $N=3,4$ and
\begin{gather}
\label{S_seq}
 \lim_{n\to +\infty}\|u_n\|_{S(T_n^-,0)}=\lim_{n\to\infty}\|u_n\|_{S(0,T_n^+)}=+\infty\\
\label{energy_seq}
\lim_{n\to\infty} E[u_n]=E_c\\
\label{norm_seq}
\sup_{t\in I_n} \int|u_n(x,t)|^{\frac{2N}{N-2}}\,dx\leq A.
\end{gather}
Then there exists a subsequence of $\{u_{0,n}\}_n$ (still denoted by $\{u_{0,n}\}_n$) and sequences $\{x_n\}_n$, $\{\lambda_n\}_n$ such that
$\left\{\frac{1}{\lambda_n^{1/2}} ~ u_{0,n}\left(\frac{\cdot-x_n}{\lambda_{n}}\right)\right\}_n$ converges in $\dot{H}^1$.
\end{lemma}
(Of course, if $N=3,4$ in the lemma, we can assume $x_n=0$ for all $n$).

We omit the proof of Lemma \ref{L:compactness}, which is close to the one of \cite[section 3]{KM} and the proof of Proposition 3.1 of \cite{KiVi}. The main ingredients of the proof are the critical profile decomposition of Keraani \cite{Keraani}, long-time perturbation arguments and the criticality of $E_c$. We note that by Claim \ref{C:variational}, any nonzero profile in a profile decomposition of $\{u_{0,n}\}_n$ has strictly positive energy, which is crucial in the argument.

Let us assume Lemma \ref{L:compactness} and conclude the proof of Proposition \ref{P:compactness}. By the definition of $E_c$, there exists a sequence of intervals $\{I_n\}_n$ and a sequence of solutions $\{u_n\}_n$ of \eqref{NLScrit} on $I_n$ such that
\begin{gather}
 \label{CVenergy}
\lim_{n\to\infty} E[u_n]=E_c\\
\label{BupS}
\lim_{n\to\infty} \|u_n\|_{S(I_n)}=+\infty\\
\label{boundLp}
\forall n,\; \forall t\in I_n, \quad\int |u_n(x,t)|^{\frac{2N}{N-2}}\,dx\leq A.
\end{gather}
Time translating $u_n$ if necessary, we may assume by \eqref{BupS} that $I_n=(\theta_n^-,\theta_n^+)$ with $\theta_n^-<0<\theta_n^+$ and
\begin{equation}
 \label{BupSbis}
\lim_{n\to\infty} \|u_n\|_{S(\theta_n^-,0)}=\lim_{n\to\infty}\|u_n\|_{S(0,\theta_n^+)}=+\infty.
\end{equation}
By Lemma \ref{L:compactness} (with $T_n^-=\theta_n^-$, $T_n^+=\theta_n^+$) extracting a subsequence in $n$, rescaling and space-translating $u_n$, we can assume that there exists $u_{0,c}\in \dot{H}(\cR^N)$ such that
\begin{equation}
 \label{CVun}
\lim_{n\to\infty} \|u_{0,n}-u_{0,c}\|_{\dot{H}^1}=0.
\end{equation}
Let $(T_-,T_+)$ be the maximal interval of existence of $u_c$. Then by the continuity of the flow of \eqref{NLScrit},
\begin{equation}
\label{time_bound}
\liminf \theta_n^+\geq T_+,\quad \limsup \theta_n^-\leq T_-.
\end{equation}
Furthermore,
\begin{equation}
\label{uc_non_scatt}
 \|u_c\|_{S(0,T_+)}=\|u_c\|_{S(T_-,0)}=+\infty.
\end{equation}
Indeed, assume for example that $\|u_c\|_{S(0,T_+)}$ is finite. Then $T_+=+\infty$ and for large $n$, $u_n$ is globally defined forward in time and satisfies $\|u_n\|_{S(0,+\infty)} \leq 1+\|u_c\|_{S(0,+\infty)}$, a contradiction with \eqref{BupSbis}. By \eqref{CVenergy}, we get
\begin{equation}
\label{good_energy}
E[u_c]=E_c\geq E[W].
\end{equation}
Furthermore, by \eqref{boundLp} and \eqref{time_bound},
\begin{equation}
 \label{boundLp_uc}
\sup_{t\in (T_-,T_+)} \int |u_c(x,t)|^{\frac{2N}{N-2}}\,dx\leq A.
\end{equation}
Let $\{t_n\}_n$ be a sequence in $(T_-,T_+)$. By \eqref{uc_non_scatt},\eqref{good_energy} and \eqref{boundLp_uc},
the sequence of solutions $\{u_c(t_n+\cdot)\}_n$ satisfies the assumptions of Lemma \ref{L:compactness}
with $T_n^-=T_-$ and $T_n^+=T_+$, which shows that there exist sequences $\{\lambda_n\}_n$ and $\{x_n \}_n$
such that a subsequence of  $\left\{\frac{1}{\lambda_n^{1/2}}u_c\left(t_n,\frac{x-x_n}{\lambda_n}\right)\right\}_n$
converges in $\dot{H}^1$. By a standard lifting Lemma (e.g., see \cite[Appendix A]{DHR}),
one can deduce the existence of $\lambda(t)$ and $x(t)$ such that $K$ (defined by \eqref{defK}) has compact closure, which concludes the proof of Proposition \ref{P:compactness}.
\end{proof}
\begin{proof}[Proof of Proposition \ref{P:rigidity}]
We divide the proof into three parts, following again very closely \cite{KM} and, in Part 3, \cite{KiVi}. For simplicity, we will often omit the subscript $c$ and write $u=u_c$.

\noindent\emph{Part 1.} We show in this step that $u_c$ is global. Assume, for example, that $T_+(u_c)$ is finite. Let $\varphi\in C_0^{\infty}(\cR^N)$ such that $\varphi(x)=1$ if $|x|\leq 1$ and $\varphi(x)=0$ if $ |x|\geq 2$. Let
$$
M_R(t)=\int \varphi\left(\frac{x}{R}\right) |u(x,t)|^2\,dx.
$$
Then using that $u$ is bounded in $\dot{H}^1$ and Hardy's inequality,
$$
|M_R'(t)|=\left|2 \I \int \frac{1}{R}\nabla \varphi\left(\frac{x}{R}\right)\cdot\nabla u\,\overline{u}\,dx\right|\leq C\|\nabla u(t)\|^2\leq C,
$$
where $C$ is independent of $t$ and $R$. Thus, if $0\leq s<t<T_+(u)$, $R\geq 1$,
\begin{equation}
 \label{eq16}
\left|M_R(s)-M_R(t)\right|\leq C|t-s|.
\end{equation}
We next notice that there exists $t_n\to T_+(u)$ such that
\begin{equation}
 \label{eq17}
\lim_{n\to\infty} M_R(t_n)=0.
\end{equation}
Indeed, if $|x(t)|+\lambda(t)+\lambda(t)^{-1}$ is bounded as $t\to T_+(u)$, then by the compactness of $\overline{K}$, there exists a sequence $t_n\to T_+(u)$ such that $u(t_n)$ converges in $\dot{H}^1$, contradicting the fact that $T_+(u)$ is the maximal time of existence of $u$. Thus, there exists a sequence $t_n\to T_+(u)$ such that one of the following holds:
$$
\lim_{n\to\infty} |x(t_n)|=+\infty\text{ or }\lim_{n\to\infty}\lambda(t_n)=0\text{ or }\lim_{n\to\infty}\lambda(t_n)=+\infty.
$$
In each case, \eqref{eq17} follows easily.

Combining \eqref{eq16} and \eqref{eq17}, we see that for all $R\geq 1$ and for all $s\in [0, T_+(u))$,
$$
M_R(s)\leq C|T_+(u)-s|.
$$
Letting $R\to \infty$, we get by conservation of mass that $u_0\in L^2$ and that $M(u_0)\leq C|T_+(u)-s|$. Letting $s\to T_+(u)$, we get that $u_0=0$, contradicting the fact that the energy of $u$ is positive.

Note that to show $T_+(u_c)=+\infty$, we only used that
\begin{equation}
 \label{defK+}
K_+=\Big\{ \frac{1}{\lambda(t)^{\frac{N-2}{2}}}u_c\left(\frac{x-x(t)}{\lambda(t)},t\right),\; t\in [0,T_+(u_c))\Big\}
\end{equation}
has compact closure in $\dot{H}^1(\cR^N)$.

We next treat the global case. Let
$$
a=\inf_{t\in \cR} \int |u(x,t)|^{\frac{2N}{N-2}}.
$$
We note that by compactness of $\overline{K}$, $a>0$. We let $\eps_0=\eps_0(a,A)$ given by Claim \ref{C:variational}.
We distinguish between space dimensions $N=3, 4$ and $N\geq 5$.

\emph{Part 2.} Global radial case, $N=3, 4$. Here, $x(t)=0$ for all $t$.

We first assume
\begin{equation}
  \label{lambda_bnd_below}
\inf_{t\in [0,+\infty)}  \lambda(t)>0.
\end{equation}
Let $R$ be a large constant to be specified later, and
\begin{equation}
\label{def_zR}
z_R(t)=\int R^2\, \chi\left(\frac{x}{R}\right)|u(x,t)|^2\,dx,
\end{equation}
where $\chi$ is smooth, $\chi(x)=|x|^2$ if $|x|\leq 1$ and $\chi(x)=0$ if $|x|\geq 2$. Then
\begin{equation}
 \label{zR'}
|z_R'(t)|=\left|2 \I \int R\, \nabla \chi\left(\frac{x}{R}\right)\cdot\nabla u\,\overline{u}\,dx\right|.
\end{equation}
Since $u$ is bounded in $\dot{H}^1$, there exists $C>0$ such that
\begin{equation}
 \label{eq18}
\forall t \in [0,+\infty), \quad \left|z'_R(t)\right|\leq CR^2.
\end{equation}
By an explicit computation, using that $u$ is a solution of \eqref{NLScrit}, we get
\begin{equation}
 \label{eq19}
z''_R(t)\geq \underbrace{8\left(\int |\nabla u(t)|^2-\int |u|^{\frac{2N}{N-2}}\right)}_{(A)}-\underbrace{C\int_{|x|\geq R} |\nabla u|^2+\frac{|u|^2}{|x|^2}+|u|^{\frac{2N}{N-2}}}_{(B)}.
\end{equation}
By Claim \ref{C:variational}, the term $(A)$ in \eqref{eq19} is greater than $8\eps_0$. By the compactness of $\overline{K}$, one can chose $R$ large so that $|(B)|\leq \eps_0$. Combining, we get that if $R$ is large,
\begin{equation}
 \label{eq20}
\forall t\geq 0,\quad z''_R(t)\geq 7\eps_0.
\end{equation}
Integrating \eqref{eq20} between $0$ and $T>0$, we get
$$
C R^2 \geq z'_R(T)\geq 7\eps_0 T+z_R'(0),
$$
a contradiction if $T\to +\infty$, $R$ being fixed. This concludes the proof when \eqref{lambda_bnd_below} holds. Again we only used that $K_+$ defined by \eqref{defK+} has compact closure in $\dot{H}^1$.

We next assume that \eqref{lambda_bnd_below} does not hold. Using the compactness of $\overline{K}$, one can construct another solution $\tilde{u}_c$ of \eqref{NLScrit} such that $0\in I_{\max}(\tilde{u}_c)$,
\begin{equation*}
E[u_c]=E[\tilde{u}_c],\quad
\sup_{t\in [0,T_+(\tilde{u}_c))} \int |\tilde{u}_c|^{\frac{2N}{N-2}}\leq A
\end{equation*}
and there exists $\tilde{\lambda}(t)$ such that
$$
\tilde{K}_+=\left\{\frac{1}{\tilde{\lambda}^{\frac{N-2}{2}}(t)}\tilde{u}_c \left(\frac{x}{\tilde{\lambda}(t)},t\right),\; t\in [0,T_+(\tilde{u}_c))\right\}
$$
has compact closure in $\dot{H}^1$ and
$$
\inf_{t\in [0,T_+(\tilde{u}_c))} \tilde{\lambda}(t)>0.
$$
We refer to the proof of Theorem 5.1 in \cite{KM} for the construction of $\tilde{u}_c$ and $\tilde{\lambda}$.

By Part 1 of the proof, $T_+(\tilde{u}_c)=+\infty$. We are thus reduced to the case where \eqref{lambda_bnd_below} holds, concluding this part.

\emph{Part 3.} Global case, $N\geq 5$, without radial assumption.

By Part 1 of the proof, we can assume again that $u$ is globally defined. By Section 4 of \cite{KiVi}, we can assume that one of the following holds:
\begin{gather}
 \label{soliton}
\forall t\in \cR, \quad \lambda(t)=1\\
\label{cascade}
\sup_{t\in \cR} \lambda(t)<\infty\text{ and }\lim_{t\to +\infty} \lambda(t)\to 0
\end{gather}
By Theorem 6.1 of \cite{KiVi}, in both cases
\begin{equation}
 \label{u0L2}
u_0\in L^2(\cR^N).
\end{equation}
According to Theorem 7.1 of \cite{KiVi}, \eqref{cascade} does not hold. In this case we do not need the assumption that $\sup_t \int |u(t)|^{\frac{2N}{N-2}}< \int |W|^{\frac{2N}{N-2}}$.

We next assume that \eqref{soliton} holds. By Lemma 8.2 of \cite{KiVi}, $K$ defined by  \eqref{defK} (with $\lambda(t)=1$) has compact closure in $H^1(\cR^N)$. Applying the Galilean transform
$$
u(x,t) \mapsto e^{ix\cdot \xi_0}e^{-it|\xi_0|^2}u(x-2\xi_0t,t),
$$
with a suitable choice of $\xi_0$, one can assume that the conserved momentum
$ \im \int \nabla u \overline{u}$ is zero. Following \cite{DHR}, one can deduce
\begin{equation}
 \label{eq23}
\lim_{t\to \infty} \frac{x(t)}{t}=0.
\end{equation}
Let $C_0>0$ be a large constant (depending only on $a$ and $A$). Let $T>0$ and
$$
R=C_0+\max_{0\leq t\leq T} |x(t)|.
$$
Consider $z_R(t)$ defined by \eqref{def_zR}. Using that $u(t)$ is bounded in $H^1(\cR^N)$ we obtain that there is a constant $C>0$ independent of $R$ such that
\begin{equation}
 \label{eq24}
\forall t\in \cR,\quad |z_R'(t)|\leq CR.
\end{equation}
Furthermore, as before
\begin{equation}
 \label{bnd_zR''}
z''_R(t)\geq \underbrace{8\left(\int |\nabla u(t)|^2-\int |u|^{\frac{2N}{N-2}}\right)}_{(A)}-\underbrace{C\int_{|x|\geq R} |\nabla u|^2+\frac{|u|^2}{|x|^2}+|u|^{\frac{2N}{N-2}}}_{(B)}.
\end{equation}
Using Claim \ref{C:variational}, the compactness of $\overline{K}$ in $\dot{H}^1$ and the choice of $R$, we get, for $C_0$ large (independently of $0$ and $T$),
$$
\forall t\in [0,T], \quad z_R''(t)\geq 7\eps_0.
$$
Integrating between $0$ and $T$, we deduce
$$
C R \geq 7\eps_0 T,
$$
that is
$$
C \left(C_0+\max_{0\leq t\leq T}|x(t)|\right)\geq 7\eps_0|T|,
$$
which contradicts \eqref{eq23}, which concludes this sketch of proof.

We note that we could have (as in Part 2) reduced to a critical solution $u$ satisfying
$$
\inf_{t\geq 0} \lambda(t)>0,
$$
however, such a solution does not necessarily satisfy $\sup_{t}\lambda(t)<\infty$, a condition that is needed in \cite{KiVi} to prove that $u_0\in L^2$.
\end{proof}

\subsection{Energy-subcritical case}
\label{SS:subcrit}
Now we consider the NLS equation \eqref{E:NLS} when $0<s_c<1$ and
obtain scattering for bounded solutions in Theorem \ref{T:BB}:
\begin{theorem}
\label{T:scatt_sub1}
Let $u$ be a solution of \eqref{E:NLS}, where $p$ satisfies \eqref{E:p-subcrit},
and assume that $T_+(u)=+\infty$ and
$$
\limsup_{t\to +\infty} \left(\int |u(t)|^{p+1}\right)^{s_c}M[u]^{1-s_c}< \left(\int |Q|^{p+1}\right)^{s_c} M[Q]^{1-s_c}.
$$
Then $u$ scatters forward in time in $H^1$.
\end{theorem}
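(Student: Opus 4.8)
The plan is to run the concentration--compactness/rigidity scheme of the energy-critical case (Claim~\ref{C:variational} and Propositions~\ref{P:compactness}--\ref{P:rigidity}) in the $H^1$ framework, with the Sobolev inequality replaced by the sharp Gagliardo--Nirenberg inequality \eqref{E:sharpGN}. First I would reduce to a uniform bound: by hypothesis there exist $t_0$ and $A<M[Q]^{1-s_c}(\int|Q|^{p+1})^{s_c}$ with $M[u]^{1-s_c}(\int|u(t)|^{p+1})^{s_c}\le A$ for all $t\ge t_0$, and since only forward scattering is claimed, it suffices to treat solutions obeying this bound for all $t\ge 0$. Such a solution is automatically uniformly bounded in $H^1$: from $\int|\grad u|^2=2E[u]+\frac{2}{p+1}\int|u|^{p+1}$ together with conservation of mass and the $L^{p+1}$ bound we get $\sup_t\|u(t)\|_{H^1}<\infty$, which is exactly why one can hope for $H^1$-scattering. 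As usual, scattering in $H^1$ is equivalent to finiteness of a critical Strichartz (scattering) norm $\|u\|_{S(\dot H^{s_c})}$, and small $\dot H^{s_c}$-data scatter (see \cite{G}, \cite{DHR}).

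The key new ingredient is a coercivity lemma playing the role of Claim~\ref{C:variational}. For $f\in H^1$ with $a\le M[f]^{1-s_c}(\int|f|^{p+1})^{s_c}\le A<M[Q]^{1-s_c}(\int|Q|^{p+1})^{s_c}$, the renormalized inequality \eqref{sharpGN2} bounds $\int|\grad f|^2$ from below by a multiple of $(\int|f|^{p+1})^{4/(N(p-1))}$; since $\frac{4}{N(p-1)}<1$ (because $s_c>0$), the function $y\mapsto \frac{1}{c_QM[f]^{\kappa}}\,y^{\frac{4}{N(p-1)}}-\frac{N(p-1)}{2(p+1)}\,y$ is strictly positive precisely below the threshold value $y=\int|Q|^{p+1}$ (scaled by the mass), so the strict inequality $<A$ yields a scale- and mass-invariant gap
$$
M[f]^{\frac{1-s_c}{s_c}}\left(8\int|\grad f|^2-\frac{4N(p-1)}{p+1}\int|f|^{p+1}\right)\ge \delta
$$
for some $\delta=\delta(a,A)>0$; combined with \eqref{norms_u} this also forces $E[f]$ to be bounded below by a positive constant. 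The crucial point is that coercivity of the virial functional $V_{tt}$ is obtained from the \emph{upper} bound on $\int|f|^{p+1}$ alone, which is possible exactly because of the one-sided implication in \eqref{claim1}.

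With coercivity in hand, I would mimic Propositions~\ref{P:compactness} and \ref{P:rigidity}. The conserved, scale-invariant quantity $\mathcal{ME}[u]$ plays the role that $E$ plays in the critical case: setting $\mathcal S(\mathcal{ME}_0,A)$ to be the supremum of $\|u\|_{S(\dot H^{s_c})}$ over solutions with $\mathcal{ME}[u]\le\mathcal{ME}_0$ and $\sup_tM[u]^{1-s_c}(\int|u|^{p+1})^{s_c}\le A$, the variational lemma together with the reverse implication \eqref{claim2} of Claim~\ref{Cl:variational} (valid for $\mathcal{ME}\le1$) shows that $\mathcal S(\mathcal{ME}_0,A)<\infty$ for small $\mathcal{ME}_0$. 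Hence $\mathcal{ME}_c(A)=\inf\{\mathcal{ME}_0:\mathcal S(\mathcal{ME}_0,A)=\infty\}$ is positive (if it is infinite we are already done). Via the $H^1$-profile decomposition of Keraani \cite{Keraani}, long-time perturbation theory and the criticality of $\mathcal{ME}_c$, a minimizing sequence produces a \emph{critical element} $u_c$, global forward in time, with $\sup_t M[u_c]^{1-s_c}(\int|u_c|^{p+1})^{s_c}\le A$ and $\mathcal{ME}[u_c]=\mathcal{ME}_c$, whose forward trajectory modulo the symmetries $f\mapsto\lambda^{2/(p-1)}f(\lambda(\cdot-x))$ is precompact in $H^1$; as in the critical case, the variational lemma guarantees that each nonzero profile carries positive energy, which is what makes the extraction work.

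Finally I would exclude $u_c$ by a truncated virial argument, following Parts 2--3 of the proof of Proposition~\ref{P:rigidity}; no radial assumption is needed since the momentum $P[u_c]=\I\int\grad u_c\,\overline{u_c}$ may be normalized to zero by a Galilean transformation, after which compactness gives $x(t)/t\to0$ as in \cite{DHR}. Taking $z_R(t)=\int R^2\chi(x/R)|u_c|^2$ with $R=C_0+\max_{0\le t\le T}|x(t)|$, the localized virial identity yields $z_R''(t)\ge \tfrac{\delta}{2}$ on $[0,T]$ once the tail terms are made small by precompactness and $C_0$ is chosen large, while $|z_R'(t)|\le CR$; integrating over $[0,T]$ and using the sublinear growth of $R$ gives a contradiction as $T\to\infty$, and the case $\inf_t\lambda(t)>0$ is treated directly. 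I expect the rigidity step to be the main obstacle: obtaining $x(t)/t\to0$ and controlling the virial tails without radiality, which requires combining conservation of the normalized momentum with the $H^1$-precompactness of the trajectory. The genuinely new analytic input, however, is the coercivity of $V_{tt}$ extracted from the $L^{p+1}$ bound alone.
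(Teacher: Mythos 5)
Your proposal is correct and follows essentially the same route as the paper: the paper proves the stronger uniform statement (Theorem \ref{T:scatt_sub2}) via exactly your coercivity lemma (Claim \ref{C:variational_sub}, extracting positivity of $\int |\nabla f|^2-\frac{N(p-1)}{2(p+1)}\int|f|^{p+1}$ and of $E[f]$ from the two-sided $L^{p+1}$ bound alone), then a critical element from Keraani's profile decomposition and long-time perturbation (Proposition \ref{P:compactness_sub}), and rigidity by the nonradial truncated-virial argument of Part 3 of Proposition \ref{P:rigidity} with zero momentum after a Galilean transform and $x(t)/t\to 0$ as in \cite{DHR}. The only (harmless) deviation is that you carry the scaling symmetry $\lambda(t)$ in the compactness-modulo-symmetries statement, whereas the paper observes that in the $H^1$ subcritical setting no scaling parameter is needed --- mass conservation forces the scales in an $H^1$ profile decomposition to be trivial, so precompactness holds modulo spatial translations alone.
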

As in the previous subsection, we first state a slightly stronger result.
Define
$$
\|u\|_{S(I)}=\|u\|_{L^{\alpha}\left(I,L^{p+1}(\cR^N)\right)},
$$
where $\alpha=\frac{2(p-1)(p+1)}{4-(N-2)(p-1)}$.
Observe\footnote{In \cite[section 2.2.1]{G}, $S(I)$ is defined as the intersection of all $L^q(I,L^r)$ spaces with $(q,r)$ $\dot{H}^{s_c}$-admissible. It is sufficient for this paper to use just one such admissible paper, as in \cite{C+chinese} for example.}
that $(\alpha, p+1)$ is $\dot{H}^{s_c}$-admissible,
i.e., $\frac2{\alpha}+\frac{N}{p+1} = \frac{N}2-s_c$.
We note that if $u$ is a solution of \eqref{E:NLS} which is bounded in $H^1$ on $[0,+\infty)$ and such that $\|u\|_{S(0,+\infty)}$ is finite, then $u$ scatters forward in time (see \cite{bookC}).

For $L\in \cR$,  $A> 0$, we let $\cS(L,A)$ be the supremum of all $\|u\|_{S(I)}$, where $I$ is a real interval, and $u$ a solution of \eqref{E:NLS} on $I\times \cR^N$ such that
$$
\sup_{t\in I} \left(\int |u(t)|^{p+1}\right)^{s_c}M[u]^{1-s_c}\leq A\quad \text{and}\quad  E[u]^{s_c}M[u]^{1-s_c}\leq L.
$$
\begin{theorem}
\label{T:scatt_sub2}
If \eqref{E:p-subcrit} holds and $A<\left(\int |Q|^{p+1}\right)^{s_c}M[Q]^{1-s_c}$, then for all $L$ in $\cR$,
$$
\cS(L,A)<\infty.
$$
\end{theorem}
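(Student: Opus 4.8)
The plan is to run the concentration--compactness/rigidity scheme of \cite{KM} exactly as in the energy-critical case (Theorem \ref{T:crit_scatt2}, Propositions \ref{P:compactness} and \ref{P:rigidity}), importing the subcritical profile-decomposition and long-time perturbation machinery from \cite{G,DHR,C+chinese}, and replacing the role played there by the sharp Sobolev inequality with the sharp Gagliardo--Nirenberg inequality \eqref{sharpGN2}. The single genuinely new ingredient, compared to the below-threshold theory behind Theorem \ref{T:DHR}, is a coercivity statement adapted to the hypothesis that the \emph{$L^{p+1}$-norm} (rather than the gradient) lies below the ground-state level; once this is available, the rest of the argument is structurally identical to the critical case.

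\emph{Variational step.} I would first prove the subcritical analogue of Claim \ref{C:variational}: if $0<a\leq A<\big(\int|Q|^{p+1}\big)^{s_c}M[Q]^{1-s_c}$, then there is $\delta=\delta(a,A)>0$ such that every $f$ with $a\leq\big(\int|f|^{p+1}\big)^{s_c}M[f]^{1-s_c}\leq A$ satisfies $E[f]>0$ and
\[
8\int|\nabla f|^2-\frac{4N(p-1)}{p+1}\int|f|^{p+1}\;\geq\;\delta\, M[f]^{-\frac{1-s_c}{s_c}}.
\]
Indeed, setting $y=\frac{M[f]^{(1-s_c)/s_c}\int|f|^{p+1}}{M[Q]^{(1-s_c)/s_c}\int|Q|^{p+1}}\in(0,1)$ and using the Pohozaev identity \eqref{E:Pohozaev} (so that the left-hand side vanishes for $f=Q$), the normalized form \eqref{sharpGN2} shows that the left-hand side equals a positive multiple of $\eta(y):=y^{4/(N(p-1))}-y$. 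Since $4/(N(p-1))<1$ (because $s_c>0$), the continuous function $\eta$ is strictly positive on $(0,1)$, hence bounded below by a positive constant on the compact subinterval of $(0,1)$ cut out by $a$ and $A$; positivity of $E[f]$ follows from the same inequality together with \eqref{norms_u}.

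\emph{Compactness and rigidity.} The base case replacing the small-energy Step~1 is that for $L<M[Q]^{1-s_c}E[Q]^{s_c}$ (equivalently $\mathcal{ME}<1$) the $L^{p+1}$ bound is upgraded by Claim \ref{Cl:variational} to a gradient bound below threshold, so Theorem \ref{T:DHR} and its underlying uniform scattering bound give $\cS(L,A)<\infty$; thus $L_c:=\inf\{L:\cS(L,A)=+\infty\}\geq M[Q]^{1-s_c}E[Q]^{s_c}>0$. The subcritical profile decomposition together with the minimality of $L_c$ then produces a non-scattering critical solution $u_c$ with precompact normalized orbit, $\sup_t\big(\int|u_c(t)|^{p+1}\big)^{s_c}M[u_c]^{1-s_c}\leq A$ and $E[u_c]^{s_c}M[u_c]^{1-s_c}=L_c$, the variational step ensuring each nonzero profile has positive energy (the point that makes energy additivity usable). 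In the rigidity step two simplifications occur relative to the critical case: since $E[u_c]$ and $\int|u_c(t)|^{p+1}$ are both bounded, $\|\nabla u_c(t)\|_{L^2}^2=2E+\frac{2}{p+1}\int|u_c|^{p+1}$ is uniformly bounded, so the $H^1$ blow-up criterion gives $T_\pm(u_c)=\pm\infty$ automatically (Part~1 of Proposition \ref{P:rigidity} is unnecessary); and precompactness in $H^1$ gives $\inf_t\int|u_c(t)|^{p+1}=:a>0$, so the coercivity above yields $8\int|\nabla u_c|^2-\frac{4N(p-1)}{p+1}\int|u_c|^{p+1}\geq\delta>0$ for all $t$. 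Running the localized virial $z_R$ as in \eqref{def_zR}--\eqref{eq20}, the truncation error is negligible for $R$ large by precompactness, so $z_R''\geq\delta/2$ while $|z_R'|\leq CR\|u_c\|_{H^1}$ stays bounded; integrating yields the usual contradiction.

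\emph{Main obstacle.} The delicate content is not the virial but the two imported ingredients. First, the subcritical profile decomposition and perturbation theory must be run with the a priori control being the below-threshold $L^{p+1}$ bound rather than a gradient bound; this is precisely where the coercivity step and the uniform $H^1$ bound (from $E$ and $\int|u|^{p+1}$) are essential, since the $L^{p+1}$ hypothesis alone does not control $\dot H^{s_c}$. Second, in the rigidity one must control the growth of the spatial center $x(t)$: after a Galilean transform normalizing the momentum $\im\int\nabla u_c\,\overline{u_c}$ to zero, I would deduce $x(t)=o(t)$ as in \cite{DHR} and take $R=C_0+\max_{[0,T]}|x(t)|$ as in Part~3 of Proposition \ref{P:rigidity}. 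Both are adaptations of known arguments, so the real novelty is the variational coercivity that lets the $L^{p+1}$ hypothesis substitute for the gradient hypothesis.
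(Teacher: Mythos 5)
Your proposal is correct and follows essentially the same route as the paper: the one new ingredient you identify is exactly the paper's Claim \ref{C:variational_sub} (coercivity of $\int|\nabla f|^2-\frac{N(p-1)}{2(p+1)}\int|f|^{p+1}$ and positivity of $E[f]$ under an $L^{p+1}$ bound strictly below the ground-state level, proved via \eqref{sharpGN2} and the Pohozhaev identity), followed by the \cite{KM}-style compactness/rigidity scheme with the two subcritical simplifications the paper also notes (global existence from the uniform $H^1$ bound, no scaling parameter) and the zero-momentum Galilean normalization, $x(t)=o(t)$, and localized virial with $R=C_0+\max_{[0,T]}|x(t)|$ exactly as in Part 3 of Proposition \ref{P:rigidity}. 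The only cosmetic deviations --- seeding the induction at the full below-threshold level $L<M[Q]^{1-s_c}E[Q]^{s_c}$ via the uniform bounds behind Theorem \ref{T:DHR}, where the paper starts from small $L$ and small-data theory, and the slips that \eqref{sharpGN2} gives a lower bound (not an equality) by a positive multiple of $y^{4/(N(p-1))}-y$ and that $|z_R'|$ is $O(R)$ rather than bounded --- do not affect correctness.
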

The proof is very close to the one of Subsection \ref{SS:crit}, but two things are simpler in the subcritical setting: all solutions that are bounded in $H^1$ are global, and there is no need for the scaling parameter $\lambda(t)$. The adaptation of the arguments of \cite{KM} in the critical case to a radial subcritical setting (cubic equation in dimension $3$), was done in \cite{CMP}. The radiality assumption was removed in \cite{DHR}. We refer to \cite{G} (and also to \cite{C+chinese}) for a general energy-subcritical and mass-supercritical NLS equation.

We start by proving the analog of Claim \ref{C:variational}, which is the only new ingredient of the proof. We will then state the analogs of Proposition \ref{P:compactness} and \ref{P:rigidity}.

\begin{claim}
 \label{C:variational_sub}
Let $a,A$ be such that
$$
0 < a < A < \left(\int |Q|^{p+1}\right)^{s_c}M[Q]^{1-s_c}.
$$
Then there exists $\eps_0=\eps_0(a,A)$ such that, for all $f\in H^1(\cR^N)$, if
$$
a \leq \left(\int |f|^{p+1}\right)^{s_c}M[f]^{1-s_c}\leq A,
$$
the two following properties hold:
\begin{align}
\label{B2}
 \int |\nabla f|^2-\frac{N(p-1)}{2(p+1)}\int |f|^{p+1}&\geq \eps_0M[f]^{1-\frac 1{s_c}}\\
\label{B3}
E[f]&\geq \frac{\eps_0}{2}M[f]^{1-\frac{1}{s_c}}.
\end{align}
\end{claim}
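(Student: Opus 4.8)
The plan is to follow the pattern of the critical-case Claim~\ref{C:variational}, reducing both \eqref{B2} and \eqref{B3} to the strict positivity of a continuous function of a single scale-invariant variable. First I would rewrite \eqref{B2} in renormalized form. Multiplying \eqref{B2} by $M[f]^{\frac{1-s_c}{s_c}}>0$ and observing that $M[f]^{1-\frac1{s_c}}\,M[f]^{\frac{1-s_c}{s_c}}=M[f]^{0}=1$, the inequality \eqref{B2} is equivalent to
$$
v-\frac{N(p-1)}{2(p+1)}\,u\geq \eps_0,
$$
where I introduce the scale-invariant quantities
$$
v=M[f]^{\frac{1-s_c}{s_c}}\!\int|\nabla f|^2,\qquad u=M[f]^{\frac{1-s_c}{s_c}}\!\int|f|^{p+1},
$$
and $v_Q,u_Q$ denote the same expressions for $f=Q$. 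In these variables the hypothesis $a\leq \big(\int|f|^{p+1}\big)^{s_c}M[f]^{1-s_c}\leq A$ becomes $a^{1/s_c}\leq u\leq A^{1/s_c}$, with $A^{1/s_c}<u_Q$ since $A<\big(\int|Q|^{p+1}\big)^{s_c}M[Q]^{1-s_c}=u_Q^{s_c}$.

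Next I would invoke the two structural facts already available in the paper. The renormalized Gagliardo--Nirenberg inequality \eqref{sharpGN2} is exactly $\big(u/u_Q\big)^{\beta}\leq v/v_Q$ with $\beta=\frac{4}{N(p-1)}$, i.e. $v\geq v_Q\,(u/u_Q)^{\beta}$; and the Pohozhaev identity \eqref{E:Pohozaev} applied to $Q$ gives $v_Q=\frac{N(p-1)}{2(p+1)}\,u_Q$. Writing $C_0=\frac{N(p-1)}{2(p+1)}$ and combining these, for every admissible $f$,
$$
v-C_0\,u\;\geq\;C_0\,u_Q^{1-\beta}u^{\beta}-C_0\,u\;=\;C_0\,u^{\beta}\big(u_Q^{1-\beta}-u^{1-\beta}\big)=:\psi(u).
$$
Because $p>1+\tfrac4N$ forces $N(p-1)>4$, we have $\beta<1$, hence $1-\beta>0$, so $\psi(u)>0$ for $0<u<u_Q$ while $\psi(u_Q)=0$. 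The function $\psi$ is continuous and strictly positive on $(0,u_Q)$, and $[a^{1/s_c},A^{1/s_c}]$ is a compact subset of $(0,u_Q)$; therefore $\psi$ attains a positive minimum there, which I take as $\eps_0=\eps_0(a,A)>0$. This establishes \eqref{B2}. (The vanishing $\psi(u_Q)=0$ reflects that $Q$ itself sits at the threshold, consistent with Remark~\ref{R:equal_ValeriaGN}, so strict positivity really does require $A$ to stay strictly below the threshold.)

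Finally, \eqref{B3} follows from \eqref{B2} by an algebraic splitting of the energy. Writing
$$
E[f]=\tfrac12\Big(\int|\nabla f|^2-C_0\!\int|f|^{p+1}\Big)+\frac{N(p-1)-4}{4(p+1)}\int|f|^{p+1},
$$
the second term is nonnegative since $N(p-1)>4$, so $E[f]\geq\frac12\big(\int|\nabla f|^2-C_0\int|f|^{p+1}\big)\geq\frac{\eps_0}{2}M[f]^{1-\frac1{s_c}}$ by \eqref{B2}. The only point demanding care is the bookkeeping of the mass exponents in the reduction step, namely checking that the power of $M[f]$ cancels so that \eqref{sharpGN2} is genuinely a relation between $u$ and $v$ alone; this is precisely the normalization already built into \eqref{sharpGN2}, and once it is used the remainder is a direct transcription of the critical-case argument of Claim~\ref{C:variational}.
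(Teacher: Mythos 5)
Your proof is correct and takes essentially the same route as the paper's: both combine the renormalized Gagliardo--Nirenberg inequality \eqref{sharpGN2} with the Pohozhaev identity \eqref{E:Pohozaev} to reduce \eqref{B2} to the strict positivity of a continuous one-variable function of $u=M[f]^{\frac{1-s_c}{s_c}}\int|f|^{p+1}$ on the compact interval $[a^{1/s_c},A^{1/s_c}]\subset\bigl(0,M[Q]^{\frac{1-s_c}{s_c}}\int|Q|^{p+1}\bigr)$, and then deduce \eqref{B3} from the same energy splitting using $N(p-1)\geq 4$. The only cosmetic difference is that you locate the threshold zero directly via $v_Q=\frac{N(p-1)}{2(p+1)}u_Q$ and the explicit factorization $\psi(u)=C_0u^{\beta}\bigl(u_Q^{1-\beta}-u^{1-\beta}\bigr)$, whereas the paper identifies the unique zero $y_*$ of $y\mapsto y^{\beta}/c_Q-\frac{N(p-1)}{2(p+1)}y$ through the equality case of \eqref{E:sharpGN} at $f=Q$.
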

\begin{proof}
By Pohozhaev equality \eqref{E:Pohozaev}, $\int |\nabla Q|^2 = \frac{N(p-1)}{2(p+1)} \int |Q|^{p+1}$.
Recalling the Gagliardo-Nirenberg inequality \eqref{E:sharpGN}, we have
\begin{multline}
\label{E:bound_GN}
M[f]^{\frac{1}{s_c}-1}\int |\nabla f|^2-\frac{N(p-1)}{2(p+1)}M[f]^{\frac{1}{s_c}-1}\int |f|^{p+1}\\ \geq \frac{1}{c_Q}M[f]^{\frac{1}{s_c}-1-\kappa}\left(\int |f|^{p+1}\right)^{\frac{4}{N(p-1)}}-\frac{N(p-1)}{2(p+1)}M[f]^{\frac{1}{s_c}-1}\int |f|^{p+1}=\frac{y^{\frac{4}{N(p-1)}}}{c_Q}-\frac{N(p-1)}{2(p+1)}y,
\end{multline}
where $y=\int |f|^{p+1}M[f]^{\frac{1}{s_c}-1}$. The function $y\mapsto \frac{y^{\frac{4}{N(p-1)}}}{c_Q} - \frac{N(p-1)}{2(p+1)}y$ has only one zero $y_*$ on $(0,\infty)$, such that $y_*^{1-\frac{4}{N(p-1)}} =\frac{2(p+1)}{N(p-1)c_Q}$, and is positive between $0$ and $y^*$. Since the inequality \eqref{E:bound_GN} is an equality when $f=Q$, we get $$
y^*=M[Q]^{\frac{1}{s_c}-1}\int |Q|^{p+1},
$$
and \eqref{B2} follows. Noting that
$$
E[f]\geq \frac{1}{2}\left(\int |\nabla f|^2-\frac{N(p-1)}{2(p+1)}\int |f|^{p+1}\right)
$$
(indeed $\frac{N(p-1)}{4}\geq 1$), we get \eqref{B3}.
\end{proof}
The following propositions are the energy-subcritical analogs of Propositions \ref{P:compactness} and \ref{P:rigidity}.
\begin{proposition}
 \label{P:compactness_sub}
Assume that there exists $A<\left(\int |Q|^{p+1}\right)^{s_c}M[Q]^{1-s_c}$, and $ L\in \cR$ such that $\cS(L,A)=+\infty$. Then there exists a global solution $u_c$ of \eqref{E:NLS}, and a function $t\mapsto x(t)$ defined on $\cR$ such that
$$
K=\left\{ u_c(x-x(t),t),\; t\in \cR\right\}
$$
has a compact closure in $H^1$.
\end{proposition}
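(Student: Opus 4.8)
The plan is to reproduce, in the energy-subcritical range $0<s_c<1$, the concentration--compactness construction of a critical element carried out in Proposition \ref{P:compactness}, exploiting two simplifications special to this regime: any solution bounded in $H^1$ is automatically global, and no scaling parameter $\lambda(t)$ is needed, since we work in the inhomogeneous space $H^1$ and only spatial translations survive in the limiting object. The conserved scale-invariant products $E[u]^{s_c}M[u]^{1-s_c}$ and $\left(\int|u|^{p+1}\right)^{s_c}M[u]^{1-s_c}$ play here the roles of the energy and of $\int|u|^{2N/(N-2)}$ in the critical case. First I would fix the critical threshold: for data with $\left(\int|u_0|^{p+1}\right)^{s_c}M[u_0]^{1-s_c}\le A<\left(\int|Q|^{p+1}\right)^{s_c}M[Q]^{1-s_c}$ and $L:=E[u]^{s_c}M[u]^{1-s_c}$ small, the variational estimates (Claim \ref{Cl:variational} together with the coercivity \eqref{B2}--\eqref{B3} of Claim \ref{C:variational_sub}) force the scale-invariant $\dot H^{s_c}$ norm of $u_0$ to be small, so the small-data theory gives $\|u\|_{S(\cR)}<\infty$ and hence $\cS(L,A)<\infty$. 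Consequently
$$L_c=L_c(A)=\inf\{L\in\cR\,:\,\cS(L,A)=+\infty\}$$
is well defined, finite and strictly positive.

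Next I would prove the subcritical analog of Lemma \ref{L:compactness}: for any sequence $\{u_n\}$ of solutions on intervals $I_n\ni 0$ with $\|u_n\|_{S(T_n^-,0)}, \|u_n\|_{S(0,T_n^+)}\to\infty$, $E[u_n]^{s_c}M[u_n]^{1-s_c}\to L_c$, and $\sup_t\left(\int|u_n|^{p+1}\right)^{s_c}M[u_n]^{1-s_c}\le A$, a subsequence of $\{u_{0,n}(\cdot-x_n)\}$ converges in $H^1$. This rests on a linear profile decomposition (following \cite{DHR, G}), long-time perturbation theory, and the criticality of $L_c$: the almost-orthogonal (Pythagorean) expansions of mass, energy and the $\dot H^{s_c}$ norm across profiles, combined with the lower bound \eqref{B3} guaranteeing that each nonzero profile carries a definite amount of scale-invariant mass-energy, force all but one profile to vanish and pin the surviving profile's mass-energy at exactly $L_c$. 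Applying this to a minimizing sequence for $L_c$, time-translated so that the $S$-norm blows up on both sides of $0$, produces limiting data $u_{0,c}$ and a solution $u_c$ with $E[u_c]^{s_c}M[u_c]^{1-s_c}=L_c$ and $\|u_c\|_{S(0,T_+)}=\|u_c\|_{S(T_-,0)}=+\infty$, so that $u_c$ scatters in neither time direction.

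Finally, the coercivity \eqref{B2} together with the conservation of $M$ and $E$ bounds $u_c$ in $H^1$, hence $u_c$ is global. Running the compactness lemma along an arbitrary sequence $t_n\in\cR$ for the translated solutions $u_c(t_n+\cdot)$ extracts $x_n$ so that $u_c(\cdot-x_n,t_n)$ converges in $H^1$; a standard lifting argument (as in \cite[Appendix A]{DHR}) then yields a continuous map $t\mapsto x(t)$ for which $K$ has compact closure. The main obstacle is the profile-decomposition step, and in particular adapting it to a hypothesis expressed through the $L^{p+1}$ norm rather than the gradient (or $\dot H^{s_c}$) norm: this weaker control is exactly the improvement over the usual formulation, and it is what makes Claim \ref{C:variational_sub} the genuinely new ingredient, needed to recover positivity of the energy of every profile and the coercivity that closes the criticality argument. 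Verifying that the bound $\sup_t\left(\int|u|^{p+1}\right)^{s_c}M[u]^{1-s_c}\le A$ is inherited by $u_c$ and its time-translates, and that the perturbation theory is compatible with it, is the delicate part; the remaining steps are faithful transcriptions of the critical-case argument with $\lambda(t)\equiv 1$.
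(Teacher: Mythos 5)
Your proposal is correct and follows essentially the same route as the paper, which itself only sketches this proof: smallness of $\cS(L,A)$ for small $L$ via the variational claims and small-data theory, a profile-decomposition compactness lemma (the subcritical analog of Lemma \ref{L:compactness}, with translations only and no scaling parameter), extraction of a critical element with infinite scattering norm in both time directions, globalness from the $H^1$ bound, and the lifting argument of \cite[Appendix A]{DHR} to produce $x(t)$. You also correctly identify Claim \ref{C:variational_sub} as the genuinely new ingredient needed because the hypothesis controls the $L^{p+1}$ norm rather than the gradient, which is exactly the point the paper emphasizes.
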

\begin{proposition}
 \label{P:rigidity_sub}
There exist no solution $u_c$ satisfying the conclusion of Proposition \ref{P:compactness_sub}.
\end{proposition}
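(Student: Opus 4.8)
The plan is to run the Kenig--Merle rigidity (localized virial) argument, which is markedly simpler here than in the energy-critical Proposition~\ref{P:rigidity}: since the critical element $u_c$ is global and bounded in $H^1$, it automatically lies in $L^2$ with finite conserved mass, and the only modulation is the translation $x(t)$, with no scaling parameter $\lambda(t)$ to track. I argue by contradiction, assuming that a solution $u_c$ with precompact trajectory $\overline{K}$ exists.

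First I extract the two structural facts furnished by compactness. Set $a=\inf_{t\in\cR}\big(\int|u_c(t)|^{p+1}\big)^{s_c}M[u_c]^{1-s_c}$. Then $a>0$: otherwise there is a sequence $t_n$ along which, by precompactness of $\overline{K}$, $u_c(\cdot-x(t_n),t_n)$ converges in $H^1$ to a limit whose $L^{p+1}$ norm vanishes, hence to $0$; but the $L^2$ norm of this limit equals $M[u_c]>0$, a contradiction. Thus Claim~\ref{C:variational_sub} applies uniformly in $t$ with this $a$ and with $A$ as in Proposition~\ref{P:compactness_sub}. Compactness also yields uniform spatial concentration around $x(t)$: for every $\eta>0$ there is $R_0$ such that $\int_{|x-x(t)|\ge R_0}\big(|\nabla u_c|^2+|u_c|^{p+1}\big)<\eta$ for all $t$, while the $L^2$ tails are controlled by the finite mass.

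Next I normalize the conserved momentum $P[u_c]=\im\int\nabla u_c\,\overline{u_c}$ to zero by a Galilean transformation, which preserves all the hypotheses, and then establish that $|x(t)|=o(t)$ as $t\to+\infty$, following \cite{DHR} (and \cite{G} in the general subcritical setting). Converting the vanishing of the momentum into sublinear growth of the spatial center is the main technical obstacle; it proceeds by a contradiction argument relating $P[u_c]$ to the time derivative of a truncated first moment centered at $x(t)$. I expect this to be the delicate step, everything else being a direct adaptation of the critical argument.

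Finally I close with the localized virial. With $\chi$ smooth, $\chi(x)=|x|^2$ for $|x|\le 1$ and $\chi(x)=0$ for $|x|\ge 2$, put $z_R(t)=\int R^2\chi(x/R)\,|u_c(x,t)|^2\,dx$. Boundedness of $u_c$ in $H^1$ gives $|z_R'(t)|\le CR$ as in \eqref{eq24}, while the virial identity \eqref{V''} together with the truncation error, supported in $|x|\ge R$, gives
\begin{equation*}
z_R''(t)\ge 8\left(\int|\nabla u_c|^2-\frac{N(p-1)}{2(p+1)}\int|u_c|^{p+1}\right)-C\int_{|x|\ge R}\left(|\nabla u_c|^2+\frac{|u_c|^2}{|x|^2}+|u_c|^{p+1}\right).
\end{equation*}
Choosing $R=C_0+\max_{0\le t\le T}|x(t)|$ with $C_0$ large places the error region at distance $\ge C_0$ from every $x(t)$, $t\in[0,T]$, so by the concentration estimate and the finite mass it is at most $\eps_0 M[u_c]^{1-1/s_c}$, whereas Claim~\ref{C:variational_sub} bounds the main term below by $8\eps_0 M[u_c]^{1-1/s_c}$; hence $z_R''(t)\ge 7\eps_0 M[u_c]^{1-1/s_c}$ on $[0,T]$. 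Integrating and using $|z_R'|\le CR$ yields $7\eps_0 M[u_c]^{1-1/s_c}\,T\le 2CR=2C\big(C_0+\max_{0\le t\le T}|x(t)|\big)$; dividing by $T$ and letting $T\to+\infty$, the right-hand side tends to $0$ because $|x(t)|=o(t)$, forcing $\eps_0\le 0$, a contradiction that completes the proof.
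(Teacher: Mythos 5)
Your proof is correct and follows essentially the same route as the paper, which itself only sketches this proposition by reducing it to Part 3 of the energy-critical rigidity argument (the soliton case with $\lambda(t)=1$): Galilean normalization of the momentum, $|x(t)|=o(t)$ via \cite{DHR}, and the localized virial $z_R$ with $R=C_0+\max_{0\le t\le T}|x(t)|$, using Claim \ref{C:variational_sub} in place of Claim \ref{C:variational}. Your write-up merely fills in the details the paper leaves implicit (the lower bound $a>0$, the uniform concentration estimate, and the correct subcritical tail term $|u_c|^{p+1}$), all consistently with the intended argument.
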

The proof of Proposition \ref{P:compactness_sub} goes along the same lines as the proof of
Proposition \ref{P:compactness}: first, by purely variational arguments and the small data theory,
one notice that $S(L,A)$ is finite if $A<\left(\int |Q|^{p+1}\right)^{s_c}M[Q]^{1-s_c}$ and $L$ is small.
Then, using a suitable profile decomposition (see \cite{G} or \cite{Keraani}),
one shows the analog of Lemma \ref{L:compactness} to prove the existence of $u_c$ and the compactness of its trajectory up to the translation parameter $x(t)$. We note that the fact that $u_c$ is bounded in $H^1$ implies (since nonlinearity is energy-subcritical) that it is a global solution.

The proof of Proposition \ref{P:rigidity_sub} is very close to Part 3 of the proof of Proposition \ref{P:compactness} in the case where \eqref{soliton} holds. Let us just mention the analog of \eqref{bnd_zR''}:
\begin{equation*}
z''_R(t)\geq 8\left(\int |\nabla u|^2-\frac{N(p-1)}{2(p+1)} \int |u|^{p+1}\right)-C\int_{|x|\geq R} |\nabla u|^2+\frac{1}{|x|^2}|u|^2+|u|^{\frac{2N}{N-2}},
\end{equation*}
which yields a contradiction in the same way as in the above proof, replacing Claim \ref{C:variational} by Claim \ref{C:variational_sub}.

\section{Blow up criteria} \label{S:blowupcriteria}

In this section we obtain two criteria for blow up in finite time: the first one is a generalization of Lushnikov's criteria \cite{Lu95} and the second one is the modification of the first approach where the generalized uncertainty principle is replaced by an interpolation inequality \eqref{E:interp1}. Note that both criteria are applicable in the case of the energy-supercritical NLS equations with positive energy. For a specific case of the focusing 3d cubic NLS equation see \cite[Sections 3.1 and 3.2]{HPR}.

\subsection{Proof of Theorem \ref{T:1}.}
\label{S:mechanics}
We first obtain a version of an uncertainty principle. By integration by parts
$$
\|u\|_{L^2}^2 = \frac1N \int (\nabla \cdot x)  |u|^2 \, dx
= - \frac2{N} \R \int (x \cdot \nabla u)  \bar u \, dx.
$$
Since $|z|^2 = |\R z|^2+|\I z|^2$, we have
$$
\frac{N^2}4 \|u\|_{L^2}^4 + \left| \I \int (x \cdot \nabla u) \bar u \, dx \right|^2
= \left| \int (x \cdot \nabla u) \bar u \, dx \right|^2
\leq \|xu\|_{L^2}^2 \|\nabla u\|_{L^2}^2,
$$
where the last one is by Cauchy-Schwarz. Recalling the variance and its first derivative from \eqref{E:variance}, we obtain the uncertainty principle
\begin{equation}
\label{E:1}
\frac{N^2}4 \, \|u\|_{L^2}^4 +  \left| \frac{V_t(t)}{4}  \right|^2 \leq V(t) \, \|\nabla u\|_{L^2}^2.
\end{equation}
Recalling the second (time) derivative of the variance
\begin{equation}
 \label{E:2}
V_{tt}(t) = 4N(p-1) E[u] - 4 (p-1)s_c \|\grad
u(t)\|^2_{L^2(\cR^N)},
\end{equation}
we substitute the bound on $\|\nabla u\|^2_{L^2}$ from \eqref{E:1} into \eqref{E:2} to obtain
\begin{equation}
 \label{E:5}
V_{tt}(t) \leq 4N(p-1) E[u] - N^2 (p-1) s_c \frac{(M[u])^2}{V(t)} - \frac{(p-1)s_c}4 \frac{|V_t(t)|^2}{V(t)}.
\end{equation}
We rewrite the equation \eqref{E:5} to remove the last term
with $V_t^2$ by making the substitution
\begin{equation}
\label{E:VtoB}
V = B^{\frac1{\alpha+1}}, \qquad \alpha = \frac{(p-1)s_c}4=\frac{N(p-1)+4}{8},
\end{equation}
and thus,
$$
V_t  = \frac1{\alpha+1} \, B^{-\frac{\alpha}{\alpha+1}} B_t \qquad \text{and} \qquad
V_{tt}  = -\frac{\alpha}{(\alpha+1)^2}\,
B^{-\frac{2\alpha+1}{\alpha+1}} B_t^2 + \frac1{\alpha+1}\,
B^{-\frac{\alpha}{\alpha+1}} B_{tt},
$$
which gives
$$
B_{tt} \leq 4(\alpha+1) N (p-1) E[u] B^{\frac\alpha{\alpha+1}} -(\alpha+1)N^2 (p-1)s_c %(N(p-1)-4)}2
{(M[u])^2}{B^{\frac{\alpha-1}{\alpha+1}}},
$$
or equivalently,
\begin{equation}
\label{E:6}
B_{tt} \leq \frac{N (p-1)(N(p-1)+4)}2 \left( E[u] \,
B^{\frac{N(p-1)-4}{N(p-1)+4}} - \frac{N s_c}{4} \,
{(M[u])^2} \, {B^{\frac{N(p-1)-12}{N(p-1)+4}}} \right),\quad t\in [0,T_+).
\end{equation}
We rescale $B$ as follows: let
$B(t)=B_{max}\, b(at)$, where
\begin{equation}
\label{eq*}
B_{max}=\left(\frac{N s_c (M[u])^2}{4 E[u]}\right)^{\frac{N(p-1)+4}{8}},
\quad a = \frac{8\sqrt 2}{\sqrt{N \,s_c}} \, \frac{E[u]}{M[u]}.
\end{equation}
Then letting $s=at$, we get
\begin{equation}
\label{E:b}
 \omega\,b_{ss}\leq b^{\gamma}-b^{\delta},\quad s\in [0,T^+/a),
\end{equation}
where
$$
\gamma=\frac{N(p-1)-4}{N(p-1)+4},\quad \delta=\frac{N(p-1)-12}{N(p-1)+4} \equiv 2\gamma-1,\quad \omega=\frac{64}{N(p-1)(N(p-1)+4)}.
$$
Note that, since $p>1+\frac{4}{N}$,
\begin{equation}
 \label{E:gamma_delta}
0<\gamma<1,\quad -1<\delta<\gamma.
\end{equation}
To analyze equation \eqref{E:b}, a mechanical analogy of
a particle moving in a field with a potential barrier is used as it was adapted
in \cite{HPR} from work of Lushnikov \cite{Lu95}.  We rewrite \eqref{E:b} as
\begin{equation}
\label{E:b'}
\omega\,b_{ss}+\frac{\partial U}{\partial b}\leq 0,\quad t\in [0,T^+/a),
\end{equation}
where $U(b)=\frac{b^{\delta+1}}{\delta+1}-\frac{b^{\gamma+1}}{\gamma+1}$.
The analogy from mechanics is as follows: Let $b=b(t)$ be a coordinate of a particle (of mass 1) with a motion under 2 forces: $b_{tt} = F_1 +F_2$, where $F_1 = -\frac{1}{\omega}\frac{\partial U}{\partial B}$, and $F_2 = -g^2(t)$ is some unknown external force which pulls the particle towards zero.
The collapse occurs when this particle reaches the origin in a finite period of time, i.e., when
$b(t)=0 \quad \text{for some} \quad 0<t<\infty$. If the particle reaches the origin without the force
$-g^2(t)$, then it should also reach the origin in the situation when this force is applied. We are thus lead to consider equation:
\begin{equation}
\label{E:conservative}
\omega\,b_{ss}+\frac{\partial U}{\partial b}=0.
\end{equation}
Define the energy of the particle
\begin{equation}
\label{E:E(t)}
\mathcal{E}(s) = \frac{\omega}{2}\,b_s^2(s) + U(b(s)),
\end{equation}
which is conserved for solutions of \eqref{E:conservative}.
Note that $\frac{\partial U}{\partial b}=b^{\delta}-b^{\gamma}$. Thus, in terms of dependence of $U$ on the particle's coordinate $b$, it is a bell-shaped function near $1$ (for positive $b$) with the local maximum $U_{max}=\frac{1}{\delta+1}-\frac{1}{\gamma+1}$ attained at $b=1$. Using conservation of the energy for \eqref{E:E(t)}, we obtain immediately two blow-up criteria for solutions of \eqref{E:conservative}:
\begin{enumerate}
\item
If $\mathcal{E}(0) < U_{max}$ and $b(0) < 1$ (to the left of the
bump), then (it does not matter what $b_s(0)$ is, since there in not
enough energy to escape this region) the particle fall onto the
origin, and collapse occurs.
\item
If $\mathcal{E}(0)>U_{max}$, then the particle can overcome the energy barrier. Indeed, by energy conservation the sign of $b_t$ does not change, and the condition $b_t(0)<0$ is sufficient to produce collapse.
\end{enumerate}
Proposition \ref{P:collapse} shows that these two sufficient conditions for blow-up in finite time remains valid in case of the equation \eqref{E:b} (as well as a third condition corresponding to the limit case $\mathcal{E}=U_{max}$).

\begin{proposition}
\label{P:collapse}
Let $b$ be a nonnegative solution of \eqref{E:b'} such that one of the following holds:
\begin{itemize}
\item[({\bf A})]
$\mathcal{E}(0)<U_{max}\text{ and }b(0)<1$,
\item[({\bf B})]
$\mathcal{E}(0) >U_{max}$ and $b_s(0) < 0$.
\item[({\bf C})]
$\mathcal{E}(0) =U_{max}$, $b_s(0) < 0$ and $b(0) < 1$.
\end{itemize}
Then $T_+<\infty$.
\end{proposition}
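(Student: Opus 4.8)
The plan is to read \eqref{E:b'} as the statement that the ``extra force'' $h(s):=\omega b_{ss}+U'(b)$ satisfies $h\le 0$, and to track the particle energy $\mathcal{E}(s)=\frac{\omega}{2}b_s^2+U(b)$ along the motion. Since $U'(b)=b^{\delta}-b^{\gamma}$ with $-1<\delta<\gamma<1$, the potential $U$ is strictly increasing on $(0,1)$, strictly decreasing on $(1,\infty)$, with $U(0^+)=0$ and a unique maximum $U(1)=U_{max}>0$; in particular $U(b)\le U_{max}$ for every $b\ge 0$, with equality only at $b=1$. The two facts I would isolate first are: (i) $\mathcal{E}'(s)=b_s\,h(s)$, so $\mathcal{E}$ is nondecreasing whenever $b_s\le 0$ and nonincreasing whenever $b_s\ge 0$; and (ii) on the region $\{b<1\}$ the inequality forces $\omega b_{ss}\le -U'(b)<0$, i.e. $b$ is strictly concave there. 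The goal in every case is to show that $b$ vanishes at some finite $s^*$; since $b$ equals $V^{\alpha+1}$ up to the positive factor $B_{max}$ and the rescaling $s=at$, and $V>0$ on $[0,T_+)$, this forces $T_+/a\le s^*$, hence $T_+<\infty$.

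For case ({\bf B}) I would prove that $b_s<0$ on the whole interval of existence. If $s_0$ were the first time $b_s=0$, then by (i) $\mathcal{E}$ is nondecreasing on $[0,s_0]$, so $U_{max}\ge U(b(s_0))=\mathcal{E}(s_0)\ge \mathcal{E}(0)>U_{max}$, a contradiction. With $b_s<0$ throughout, (i) makes $\mathcal{E}$ nondecreasing, whence $\frac{\omega}{2}b_s^2=\mathcal{E}-U(b)\ge \mathcal{E}(0)-U_{max}>0$; thus $b_s$ stays below a fixed negative constant and $b$ reaches $0$ in finite time. Case ({\bf C}) is simpler and uses only $b(0)<1$ and $b_s(0)<0$: as long as $b<1$, concavity keeps $b_s\le b_s(0)<0$, so $b$ remains below $b(0)<1$ (self-consistently staying concave) and decays at least linearly to $0$.

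Case ({\bf A}) is the delicate one, since $\mathcal{E}$ is not monotone and a priori the particle might climb over the barrier. I would split on the sign of $b_s(0)$. If $b_s(0)\le 0$, then at $s=0$ either $b_s$ is already negative or $b_{ss}(0)\le -U'(b(0))/\omega<0$ makes it negative immediately, and one falls into the situation of case ({\bf C}). If $b_s(0)>0$, I claim $b$ cannot reach $1$: while $b<1$ and $b_s>0$ the energy only decreases by (i), so $\mathcal{E}<U_{max}$, whereas $b(s_1)=1$ would give $\mathcal{E}(s_1)\ge U(1)=U_{max}$. Hence $b_s$ must vanish at some finite $s_0$ with $b(s_0)<1$ (the alternative $b_s>0$ for all $s$ is excluded: then $b$ would increase to a limit $b_\infty\le 1$ with $b_s\to 0$, contradicting either strict concavity when $b_\infty<1$, or the bound $\mathcal{E}\le \mathcal{E}(0)<U_{max}=\lim U(b)$ when $b_\infty=1$); at $s_0$ one has $b_{ss}(s_0)<0$, so $b_s$ turns negative and, by concavity on $\{b<1\}$, $b$ again decays to $0$ in finite time.

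The main obstacle is precisely the non-monotonicity of the particle energy $\mathcal{E}$ for the differential inequality \eqref{E:b'}, in contrast with the conservative equation \eqref{E:conservative} where $\mathcal{E}$ is exactly conserved. The crux is case ({\bf A}): one cannot simply invoke energy conservation, and must instead bookkeep the sign of $b_s$ to confront $\mathcal{E}$ with the barrier height $U_{max}$ and rule out the particle escaping over the bump before falling back.
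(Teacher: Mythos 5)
Your proof is correct and follows essentially the same route as the paper's: the energy monotonicity obtained by multiplying \eqref{E:b'} by $b_s$, the strict concavity of $b$ on the region $\{b<1\}$, and the comparison of $\mathcal{E}$ with the barrier height $U_{max}$, concluding from the positivity of $b$ on the interval of existence. The differences are purely organizational — you split case ({\bf A}) according to the sign of $b_s(0)$ and reduce to the case-({\bf C}) dynamics, excluding the alternative $b_s>0$ for all $s$ by a limiting argument on $b_\infty$, whereas the paper first shows $b_s$ must become nonpositive (via a uniform separation $b\leq 1-\eps_0$ and uniform concavity) and then bootstraps from $t_1=\inf\{s: b_s(s)\leq 0\}$ — but the key estimates are identical.
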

\begin{proof}
Multiplying equation \eqref{E:b'} by $b_s$, we get
\begin{equation}
 \label{E:energy}
b_s(s)>0\Longrightarrow \mathcal{E}_s(s)<0\qquad b_s(s)<0\Longrightarrow \mathcal{E}_s(s)>0.
\end{equation}
We argue by contradiction, assuming $T_+=T_+(u)=+\infty$.

We first assume {\bf (A)}. Let us prove by contradiction:
\begin{equation}
\label{E:ineg_b}
 \exists s>0,\quad b_s(s)<0.
\end{equation}
If not, $b_s(s)\geq 0$ for all $s$, and \eqref{E:energy} implies that the energy  decay. By {\bf (A)}, $\mathcal{E}(s)\leq \mathcal{E}(0)< U_{max}$ for all $s$. Thus, $|b(s)-1|\geq \eps_0$ (where $\eps_0>0$ depends on $\mathcal{E}(0)$) for all $t$. Since by {\bf (A)} $b(0)<1$, we obtain by continuity of $b$ that $b(s)\leq 1-\eps_0$ for all $s$. By equation \eqref{E:b}, we deduce $b_{ss}(s)\leq -\eps_1$ for all $s$, where $\eps_1>0$ depends on $\eps_0$. Thus, $b$ is strictly concave, a contradiction with the fact that $b$ is positive and $T_+=+\infty$.

We have proved that there exists $s\geq 0$ such that $b_s(s)\leq 0$. Letting
$$
t_1=\inf \big\{s\geq 0 : b_s(s)\leq 0\big\},
$$
we get by \eqref{E:energy} that the energy is nonincreasing on $[0,t_1]$. Thus, $\mathcal{E}(s)<\mathcal{E}(0)\leq U_{max}$ on $[0,t_1]$, which proves that $b(s)\neq 1$ on $[0,t_1]$. Since $b(0)<1$, we deduce by the intermediate value theorem that $b(t_1)<1$ and by \eqref{E:b} that $b_{ss}(t_1)<0$. Since $b_s(t_1)\leq 0$, an elementary bootstrap argument, together with equation \eqref{E:b} shows that $b(s)\leq 1-\eps_0$, $b_s(s)<0$ and  $b_{ss}(s)\leq -\eps_1$ for $s> t_1$, for some positive constants $\eps_0,\eps_1$. This is again a contradiction with the positivity of $b$.

We next assume {\bf (B)}. Let $t_1$ be an interval such that $b_s(s)<0$ on $[0,t_1]$. By \eqref{E:energy}, $\mathcal{E}$ is nondecreasing on $[0,t_1]$, and thus, $\mathcal{E}(s)\geq \mathcal{E}(0)>U_{max}$ for all $s$ on $[0,t_1]$. As a consequence, $\frac{1}{2}b_s(s)^2\geq \mathcal{E}(0)-U_{max}>0$ for all $s$ in $[0,t_1]$, which shows that $b_s(s)\leq -\sqrt{\mathcal{E}(0)-U_{max}}$ on $[0,t_1]$. Finally, an elementary bootstrap argument shows that the inequality $b_s(s)\leq -\sqrt{\mathcal{E}(0)-U_{max}}$ is valid for all $s\geq 0$, a contradiction with the positivity of $b$.

Finally, we assume {\bf (C)}. By bootstrap again, $b_s(s)<0$, $b(s)<1$ and $b_{ss}(s)<0$ for all positive $s$, proving again that $b$ is a strictly concave function, a contradiction.
\end{proof}
We now formulate the conditions {\bf (A)}, {\bf (B)} and {\bf (C)} in a concise manner, in the spirit of \cite{HPR}.

Define $v$ by $b= v^{\alpha+1}$, where as before $\alpha = \frac{s_c(p-1)}4$.
Note that $2\alpha+1=\frac{N(p-1)}{4}$, $\alpha+1=\frac{N(p-1)+4}{8}$, $(\alpha+1)(\delta+1)=2\alpha$,
$(\alpha+1)(\gamma+1)=2\alpha+1$, $\omega=\frac{2}{(2\alpha+1)(\alpha+1)}$. Thus,
$$
\mathcal{E}=\frac{\alpha+1}{2\alpha+1}{v'}^2v^{2\alpha}+\frac{\alpha+1}{2\alpha}v^{2\alpha}
-\frac{\alpha+1}{2\alpha+1}v^{2\alpha+1}
\qquad \mbox{and} \qquad
U_{max}=\frac{1}{2\alpha} \frac{\alpha+1}{2\alpha+1}.
$$
Let $k=\frac{(p-1)s_c}2=2\alpha$ and introduce the function
\begin{equation}
\label{E:functionf}
f(x) = \sqrt{\frac{1}{k \, x^{k}} + x - \Big(1+\frac1{k}\Big)}.
\end{equation}
Then
$$
\mathcal{E} < U_{max} \iff |v_s| < f(v).
$$

The condition {\bf (A)} equates to
$$
v(0) < 1 \quad \text{and} \quad -f(v(0)) < v_s(0) < f(v(0)),
$$
the condition {\bf (B)} holds if and only if
$$
v_s (0) < - f(v(0))
$$
and the condition {\bf (C)} means:
$$
v(0)<1 \quad \text{and} \quad v_s (0) = - f(v(0)).
$$

Merging the above three conditions together, we obtain
\begin{equation}
\label{E:merging}
v_s(0) <
\begin{cases}
+f(v(0)) & \text{if } v(0)< 1\\
-f(v(0)) & \text{if } v(0)\geq 1.
\end{cases}
\end{equation}
Finally, recalling that $V(t) = \left(B_{max}\,b(at)\right)^\frac1{1+\alpha}$, and thus, by \eqref{eq*}  $\ds V(t) = (B_{max})^{\frac8{N(p-1)+4}} \, v\left(\frac{8 \sqrt 2 \,E}{\sqrt{N s_c} \,M} \, t \right)$, we obtain
\begin{equation}
\label{E:finalThm1}
\frac{V_t(0)}{M} < \sqrt{8 N s_c} \, g\left(\frac{4}{N s_c}\, \frac{V(0) E}{M^2} \right),
\end{equation}
which is the desired statement of Theorem \ref{T:1} .

\begin{remark}
Observe that the function $f(x)$ can be written as
$$
f(x) = \sqrt{\frac1{k} \left(\frac1{x^k} - 1 \right) + x -1 }.
$$
The limiting cases are
\begin{equation}
\label{E:functionflimits}
\lim_{k \to 0} f(x) = \sqrt{x-1-\ln x} \quad \text{and} \quad \lim_{k \to \infty} f(x) = \left\{
\begin{array}{ll} \sqrt{x-1} & \text{if}~~ x>1\\0 & \text{if}~~ x=1 \\ \infty & \text{if}~~ 0<x<1. \end{array} \right.
\end{equation}
The graph of $f(x)$ for various values of parameter $k$ is given in Figure \ref{F:g-graph}.
\end{remark}

\subsection{Sharp constant for an interpolation inequality}
\label{S:interpolation}

Before we prove Theorem \ref{T:2}, we obtain an interpolation inequality:
\begin{proposition}
\label{P:interp} Assume $p>1$ and $N\geq 1$. The inequality
\begin{equation}
 \label{E:interp1}
\|u\|_{L^2} \leq C_{p,N} \, \left( \|xu\|_{L^2}^\frac{N(p-1)}2 \, \|u\|_{L^{p+1}}^{p+1} \right)^{1/\left(\frac{N(p-1)}2 + (p+1)\right)}
\end{equation}
holds with the sharp constant $C_{p,N}$ (depending on the nonlinearity $p$ and dimension $N$)
given by \eqref{E:C-anyN-anypSimple}.
Moreover, equality is achieved if and only if there exists $\beta\geq 0$, $\alpha>0$ such that $|u(x)|=\beta
\phi(\alpha x)$, where
$$
\phi(x) =
\begin{cases}
(1-|x|^2)^{\frac1{p-1}} & \mbox{if} \quad 0 \leq |x|\leq 1 \\
0 & \textrm{if} \quad |x|> 1 .
\end{cases}
$$
\end{proposition}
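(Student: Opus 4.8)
The plan is to prove the inequality in a sharp, self-contained way via a pointwise Young inequality tuned to saturate \emph{exactly} on the profile $\phi$, and then to upgrade the resulting additive estimate to the scale-invariant product form using the two scaling symmetries of the inequality. Throughout I may assume $u\geq 0$ (only $|u|$ enters the three norms), and I abbreviate $a=\|xu\|_{L^2}^2$, $b=\|u\|_{L^{p+1}}^{p+1}$, $c=\|u\|_{L^2}^2$ and $m=\tfrac{N(p-1)}{2}$, so that the claim is equivalent to $c^{(m+p+1)/2}\leq C_{p,N}^{\,m+p+1}\,a^{m/2}b$.

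First I would establish the pointwise inequality: for all $t\geq 0$ and $w\geq 0$,
\begin{equation*}
t^2 w\leq \frac{2}{p+1}\,t^{p+1}+\frac{p-1}{p+1}\,w^{\frac{p+1}{p-1}},
\end{equation*}
which is Young's inequality $XY\leq \tfrac1q X^q+\tfrac1{q'}Y^{q'}$ with $X=t^2$, $Y=w$, $q=\tfrac{p+1}{2}$, $q'=\tfrac{p+1}{p-1}$, equality holding if and only if $t^{p-1}=w$. Taking $w=(1-|x|^2)_+$, using $1-|x|^2\leq (1-|x|^2)_+$, and integrating over $\cR^N$ yields the sharp additive estimate
\begin{equation*}
\int |u|^2-\int |x|^2|u|^2\leq \frac{2}{p+1}\int|u|^{p+1}+\frac{p-1}{p+1}\int_{|x|<1}(1-|x|^2)^{\frac{p+1}{p-1}}\,dx,\tag{$\star$}
\end{equation*}
with equality precisely when $|u(x)|=(1-|x|^2)_+^{1/(p-1)}=\phi(x)$ (forcing both $t^{p-1}=w$ a.e.\ and $u\equiv 0$ for $|x|>1$). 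Note the last integral equals $\|\phi\|_{L^{p+1}}^{p+1}$ and that $\tfrac{2}{p+1}\|\phi\|_{L^{p+1}}^{p+1}+\tfrac{p-1}{p+1}\|\phi\|_{L^{p+1}}^{p+1}=\|\phi\|_{L^{p+1}}^{p+1}=\|\phi\|_{L^2}^2-\|x\phi\|_{L^2}^2$, so $(\star)$ is an equality for $\phi$, and all three quantities for $\phi$ are explicit Beta integrals.

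The second step converts $(\star)$ into the product inequality. Given $u\not\equiv 0$, I rescale $v(x)=\beta\,u(\alpha x)$ and choose $\alpha,\beta>0$ so that $\|xv\|_{L^2}^2=\|x\phi\|_{L^2}^2$ and $\|v\|_{L^{p+1}}^{p+1}=\|\phi\|_{L^{p+1}}^{p+1}$; these two normalizations amount to a linear system in $(\log\alpha,\log\beta)$ of determinant $N(p-1)+2(p+1)\neq 0$, so a unique positive solution exists. Applying $(\star)$ to $v$ and inserting the normalizations gives $\|v\|_{L^2}^2\leq \|x\phi\|_{L^2}^2+\|\phi\|_{L^{p+1}}^{p+1}=\|\phi\|_{L^2}^2$. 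Since the quotient $J(u)=a^{m/2}b\,c^{-(m+p+1)/2}$ is invariant under both scalings $u\mapsto \beta u$ and $u\mapsto u(\alpha\cdot)$, one has $J(u)=J(v)$; and because $v$ shares the values of $a,b$ with $\phi$ while $\|v\|_{L^2}^2\leq\|\phi\|_{L^2}^2$, it follows that $J(u)=J(v)\geq J(\phi)$. This is exactly the claimed inequality, with $C_{p,N}$ read off from $J(\phi)$, i.e.\ from $\|\phi\|_{L^2}$, $\|x\phi\|_{L^2}$, $\|\phi\|_{L^{p+1}}$; evaluating these Beta integrals produces the closed form \eqref{E:C-anyN-anypSimple}. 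Equality in the product inequality forces $\|v\|_{L^2}^2=\|\phi\|_{L^2}^2$, hence equality in $(\star)$ for $v$, hence $v=\phi$ by the Young equality case; undoing the rescaling gives $|u(x)|=\beta^{-1}\phi(x/\alpha)$, the asserted extremal form.

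The main obstacle is choosing the exponents in the pointwise Young inequality so that it saturates exactly on $\phi$ — equivalently, recognizing that one must test $u^2$ against the weight $(1-|x|^2)_+$, which is dictated by the Euler--Lagrange relation $u^{p-1}=\lambda-\mu|x|^2$ of the underlying constrained minimization. Once that weight is identified, the only remaining care is (i) solvability of the normalization system in the scaling step, handled by the nonvanishing determinant, and (ii) the bookkeeping of the Beta integrals needed to display $C_{p,N}$ in closed form; neither is conceptually difficult. I emphasize that this route avoids any compactness or existence-of-extremizer argument, and — since the pointwise step needs no symmetry — it requires no rearrangement inequality.
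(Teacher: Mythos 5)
Your proof is correct, and it takes a genuinely different, though related, elementary route from the paper's. The paper splits the mass at an arbitrary radius $R$, writes $\int |u|^2$ as $\frac{1}{R^2}\int_{|x|\le R}(R^2-|x|^2)|u|^2$ plus two pieces bounded by $R^{-2}\|xu\|_{L^2}^2$, applies H\"older's inequality in integral form to the bulk term, and then minimizes the resulting bound $F(R)=AR^{\alpha}+BR^{-2}$ explicitly over $R>0$; the equality case is traced through H\"older saturation ($|u|^{p+1}$ proportional to $(R^2-|x|^2)^{\frac{p+1}{p-1}}$ on $|x|\le R$) together with vanishing of the tail. You instead run a pointwise Young inequality against the fixed weight $(1-|x|^2)_+$, chosen so that it saturates identically on $\phi$, and you recover the scale-invariant product form by normalizing two of the three norms via the two-parameter scaling group: your determinant $N(p-1)+2(p+1)\neq 0$ is exactly right, and the invariance of $J(u)=\|xu\|_{L^2}^{N(p-1)/2}\|u\|_{L^{p+1}}^{p+1}\|u\|_{L^2}^{-(\frac{N(p-1)}{2}+p+1)}$ under both scalings checks out. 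In effect your scaling normalization plays the role of the paper's optimization over $R$, and your pointwise Young step replaces its integral H\"older step; the equality analyses correspond precisely (pointwise saturation $u^{p-1}=(1-|x|^2)_+$, forcing $v=\phi$ after rescaling, versus H\"older saturation plus zero tail for some $R$). What the paper's route buys is that minimizing $F(R)$ delivers the closed-form constant $C_{p,N}$ in one stroke; your route makes the extremality of $\phi$ transparent from the first line but defers the constant to evaluating the three Beta integrals $\|\phi\|_{L^2}^2$, $\|x\phi\|_{L^2}^2$, $\|\phi\|_{L^{p+1}}^{p+1}$ --- the same Beta-function bookkeeping the paper performs inside its H\"older step, so nothing is lost, and since $\phi$ attains equality in both arguments the resulting constant necessarily coincides with \eqref{E:C-anyN-anypSimple}. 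Two small points worth a sentence in a polished write-up: for $u\not\equiv 0$ with finite right-hand side one has $0<\|xu\|_{L^2},\|u\|_{L^{p+1}}<\infty$, so your normalization system is indeed solvable, and finiteness of $\|u\|_{L^2}$ should be noted before subtracting the variance term in $(\star)$, via the pointwise bound $|u|^2\le |u|^2(1-|x|^2)_+ + |x|^2|u|^2$, whose right-hand side is integrable by your Young estimate.
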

Proposition \ref{P:interp} was proved in \cite{HPR} in the case $p=3$ using variational arguments. We give a shorter, direct proof.
\begin{proof}
Let $R>0$ be a parameter to be specified later. Split the mass of $u$ as follows
\begin{equation}
\label{split}
 \int |u(x)|^2\,dx=\frac{1}{R^2}\int_{|x|\leq R} (R^2-|x|^2)|u(x)|^2\,dx+\frac{1}{R^2}\int_{|x|\leq R} |x|^2|u(x)|^2\,dx+\int_{|x|\geq R} |u(x)|^2\,dx.
\end{equation}
By H\"older's inequality we have
\begin{align}
\label{bound_A}
\frac{1}{R^2}\int_{|x|\leq R} (R^2-|x|^2)|u(x)|^2\,dx&\leq \frac{1}{R^2}\left( \int_{|x|\leq R} \left( R^2-|x|^2 \right)^{\frac{p+1}{p-1}}\,dx \right)^{\frac{p-1}{p+1}}\left( \int |u(x)|^{p+1}\,dx \right)^{\frac{2}{p+1}}\\
\notag
&=\frac{1}{R^2}\left( \int_{|y|\leq 1}\left( R^2-R^2|y|^2 \right)^{\frac{p+1}{p-1}}R^N\,dy \right)^{\frac{p-1}{p+1}}\|u\|^2_{L^{p+1}}\\
\notag
&=R^{\frac{N(p-1)}{p+1}}D_{p,N}\|u\|^2_{L^{p+1}},\quad \text{where }D_{p,N}=\left(\int_{|y|\leq 1}\left(1-|y|^2\right)^{\frac{p+1}{p-1}}\,dy\right)^{\frac{p-1}{p+1}}.
\end{align}
Note that
$$
D_{p,N}=\left( \sigma_N\int_0^1 (1-r^2)^{\frac{p+1}{p-1}}r^{N-1}\,dr \right)^{\frac{p-1}{p+1}},
$$
where $\sigma_N$ stands for the surface area of the ${N-1}$ dimensional sphere, i.e.,
$$
\sigma_N = \frac{2 \pi^{N/2}}{\Gamma(\frac{N}2)}, �\quad
\Gamma(s+1) = \int_0^\infty e^{-t} \,t^s \,dt.
$$
By the change of variable $s=r^2$,
\begin{equation*}
\int_0^1 (1-r^2)^{\frac{p+1}{p-1}}r^{N-1}\,dr=\frac{1}{2}\int_0^1 (1-s)^{\frac{p+1}{p-1}}s^{\frac{N}{2}-1}\,ds=\frac{\Gamma\left( \frac N2 \right)\Gamma \left( \frac{2p}{p-1}\right)}{2\,\Gamma\left( \frac{2p}{p-1}+\frac N2 \right)},
\end{equation*}
where we have used the property of the Beta distribution (e.g., see \cite[p.623]{CB} or \cite[p.396 \#586]{MathTable})
$$
\int_0^1 x^{a-1} (1-x)^{b-1} \, dx = \frac{\Gamma(a)\Gamma(b)}{\Gamma(a+b)}.
$$
Hence,
\begin{equation}
\label{E:DpN}
D_{p,N}=\left( \pi^{\frac{N}{2}} \frac{\Gamma\left( \frac{2p}{p-1} \right)}{\Gamma\left( \frac{2p}{p-1}  +\frac N2\right)} \right)^{\frac{p-1}{p+1}}.
\end{equation}
Furthermore,
\begin{equation}
\label{bound_B}
\frac{1}{R^2}\int_{|x|\leq R} |x|^2|u(x)|^2\,dx+\int_{|x|\geq R} |u(x)|^2\,dx\leq \frac{1}{R^2}\int |x|^2|u(x)|^2\,dx.
\end{equation}
Combining \eqref{split}, \eqref{bound_A},\eqref{E:DpN} and \eqref{bound_B}, we obtain
\begin{equation}
 \label{bound_mass}
\forall R>0,\quad
\|u\|^2_{L^2}\leq \left( \pi^{\frac{N}{2}} \frac{\Gamma\left( \frac{2p}{p-1} \right)}{\Gamma\left( \frac{2p}{p-1} +\frac N2\right)} \right)^{\frac{p-1}{p+1}}\|u\|^2_{L^{p+1}}R^{\frac{N(p-1)}{p+1}}+\frac{1}{R^2}\|xu\|_{L^2}^2.
\end{equation}
Noting that the minimum of the function $F(R)=A R^{\alpha}+BR^{-2}$ (with $A,B>0$ and $\alpha>0$), attained at $R=\left(\frac{2B}{\alpha A}\right)^{\frac{1}{\alpha+2}}$ on $(0,+\infty)$, is
$$
F_{min}=\frac{2+\alpha}{2\alpha} (\alpha A)^{\frac{2}{\alpha+2}}(2B)^{\frac{\alpha}{\alpha+2}},
$$
we deduce from \eqref{bound_mass} that
\begin{multline*}
\|u\|^2_{L^2}\leq \\
\left( \frac{N(p-1)+2(p+1)}{2N(p-1)} \right) \left( \frac{N(p-1)}{p+1}\left( \pi^{\frac{N}{2}} \frac{\Gamma\left( \frac{2p}{p-1} \right)}{\Gamma\left( \frac{2p}{p-1} +\frac{N}2 \right)} \right)^{\frac{p-1}{p+1}}\|u\|^2_{L^{p+1}}\right)^{\frac{2(p+1)}{N(p-1)+2(p+1)}}\left(2\, \|xu\|^2_{L^2}\right)^{\frac{N(p-1)}{N(p-1)+2(p+1)}}.
\end{multline*}
Using
$$
\Gamma\left( \frac{2p}{p-1} \right)=\frac{p+1}{p-1}\Gamma\left( \frac{p+1}{p-1} \right)  \quad \mbox{and} \quad \Gamma\left( \frac{2p}{p-1} +\frac{N}{2}\right)=\left(\frac{p+1}{p-1} +\frac{N}{2}\right)\,\Gamma\left( \frac{p+1}{p-1} +\frac{N}{2} \right),
$$
we get (after tedious but straightforward computations) the inequality \eqref{E:interp1} with
\begin{equation}
\label{E:C-anyN-anypSimple}
\left(C_{p,N} \right)^{\frac{N(p-1)}{2} + (p+1)}
= \frac{N}{2\pi} \left( \frac{p-1}{p+1} \right)
\left(\pi \Big(1+\frac{2(p+1)}{N(p-1)}\Big) \right)^{\frac{N(p-1)}4 + 1} \left(\frac{\Gamma\left(\frac{p+1}{p-1}\right)}{\Gamma\left(\frac{p+1}{p-1} + \frac{N}2\right)} \right)^{\frac{p-1}2}.
\end{equation}
Note that equality in \eqref{E:interp1} holds if and only if there exists $R>0$ such that \eqref{bound_mass} is an equality. This is equivalent to the fact that for some $R>0$, both \eqref{bound_A} and \eqref{bound_B} are equalities. The inequality \eqref{bound_A} is an equality if and only if $|u(x)|^{p+1}=c \left( R^2-|x|^2 \right)^{\frac{p+1}{p-1}}$ for some constant $c\geq 0$, and the inequality \eqref{bound_B} is an equality if and only if $u(x)=0$ for $|x|\geq R$. This completes the proof of Proposition \ref{P:interp}.
\end{proof}

For several specific cases we compute the sharp constant $C_{p,N}$ explicitly in Appendix \ref{A:sharp-constants}.

\subsection{Second approach and Proof of Theorem \ref{T:2}.}

Recall the energy
$$
E[u] = \frac12 \, \big\|\grad u(t)\big\|^2_{L^2(\cR^N)} -
\frac1{p+1} \, \big\|u(t)\big\|^{p+1}_{L^{p+1}(\cR^N)},
$$
then solving for the kinetic energy term and substituting into \eqref{V''2}, we obtain
$$
V_{tt}(t) = 16 E - \frac{8(p-1)s_c}{p+1}  \, \big\|u(t)\big\|^{p+1}_{L^{p+1}(\cR^N)}.
$$
Using the sharp interpolation inequality from \eqref{E:interp1}, we get
\begin{equation}
\label{E:V_tt2}
V_{tt}(t) \leq 16 E - \frac{8(p-1)s_c}{(p+1) \left(C_{p,N} \right)^{\frac{N(p-1)}{2} + (p+1)}} \frac{M^{\frac{N(p-1)}{4}+\frac{(p+1)}2}}{V^{\frac{N(p-1)}{4}}}
\end{equation}
with $C_{p,N}$ from \eqref{E:interp1}. We next apply the same mechanical approach as in Section \ref{S:mechanics}.

We introduce again rescaling: define $v(s)$ (with $s = at$) as
$$
V(t) = V_{max} v(a t), \quad a = \sqrt{\frac{32 E}{V_{max}}},
$$
where
$$
V_{max}=\left(\frac{s_c(p-1)}{2(p+1)}\right)^{\frac{4}{N(p-1)}} \frac{M^{1+\frac{2(p+1)}{N(p-1)}}}{(C_{p,N})^{2+\frac{4(p+1)}{N(p-1)}}\, E^{\frac{4}{N(p-1)}}}.
$$
Then the inequality in \eqref{E:V_tt2} becomes
$$
v_{ss} (s) \leq \frac12 \left(1 - v^{-\frac{N(p-1)}{4}}(s) \right).$$
When the inequality in the previous expression is replaced by an equality, we obtain that the following energy is conserved:
$$
\mathcal{E}(s) =\frac{k}{1+k} \left((v_s(s))^2 - v (s)  - \frac1{k v(s)^k}\right),
$$
where as before $k= \frac{(p-1)s_c}{2}=\frac{N(p-1)}{4}-1$. Note that the maximum of the function $x\mapsto -\frac{k}{1+k}\left(x+\frac{1}{kx^k}\right)$, attained at $x=1$, is $-1$.

Similar to {\bf (A)}, {\bf (B)} and {\bf (C)}, we identify the three sufficient conditions for blow up in finite time:
\begin{itemize}
\item[({\bf A$^*$})]
$\mathcal{E}(0) < -1$ and $v(0) < 1$.

\item[({\bf B$^*$})]
$\mathcal{E}(0) >-1$ and $\partial_s v(0) < 0$.

\item[({\bf C$^*$})]
$\mathcal{E}(0) =-1$, $\partial_s v(0) < 0$ and $v(0) < 1$.
\end{itemize}

Recalling the function $f$ from \eqref{E:functionf} and using the definition of $\mathcal{E}$, we obtain
\begin{itemize}
\item[]
$\mathcal{E} < -1 \iff |\partial_s v| < f(v)$,
\item[]
$\mathcal{E} \geq - 1 \iff |\partial_s v| \geq f(v)$.
\end{itemize}
Then condition {\bf (A$^*$)} holds if and only if
$$
v(0) < 1 \quad \text{and} \quad -f(v(0)) < \partial_s v(0) < f(v(0)),
$$
the condition {\bf (B$^*$)} holds if and only if
$$
\partial_s v(0) <- f(v(0)),
$$
and the condition {\bf (C$^*$)} holds if and only if
$$
\partial_s v(0) =- f(v(0)),\quad v(0)<1.
$$
Merging the three conditions together, we obtain
\begin{equation}
\label{E:merging2}
\partial_s v(0) <
\begin{cases}
 f(v(0)) & \text{if } v(0)<1\\
-f(v(0)) & \text{if } v(0)\geq 1.
\end{cases}
\end{equation}
Substituting back $V(t)$, we obtain
$$
\frac{V_t(0)}{4\sqrt2} \, \frac{\left(C_{p,N}\right)^{1+\frac{2(p+1)}{N(p-1)}}}{ E^{\frac{s_c}{N}}M^{\frac12+\frac{p+1}{N(p-1)}} } \left(\frac{2(p+1)}{s_c(p-1)} \right)^{\frac2{N(p-1)}} < g(\theta),
$$
where $g$ is defined in \eqref{E:g-def} and
$$
\theta = \left(\frac{2(p+1)}{s_c(p-1)} \left(C_{p,N}\right)^{\frac{N(p-1)}{2} + (p+1)}\right)^{\frac4{N(p-1)}}
\frac{E^{\frac4{N(p-1)}}}{M^{1+\frac{2(p+1)}{N(p-1)}}} V(0).
$$
Simplifying, we complete the proof of Theorem \ref{T:2}.

\subsection{Reformulation of blow up criteria for real-valued data} \label{S:real}
Both Theorems \ref{T:1} and \ref{T:2} are significantly simplified for the real-valued data,
since $V_t(0)=0$ and the criteria {\bf (B)} and {\bf (C)} are irrelevant.
Respectively,
\begin{align}
\label{E:real1}
\quad \mbox{Theorem \ref{T:1}} \quad \Leftrightarrow \quad &
V(0) < \frac{N s_c}4 \, \frac{M^2}{E}\\
\label{E:real2}
\quad \mbox{Theorem \ref{T:2}} \quad \Leftrightarrow \quad &
V(0) < K \, \frac{M^{\frac{2(p+1)}{N(p-1)}+1}}{E^{\frac4{N(p-1)}}}, \\
&\mbox{where} \quad K=\left( \frac{s_c(p-1)}{2(p+1)} \frac1{\left(C_{p,N}\right)^{\frac{N(p-1)}2 +(p+1)}} \right)^{\frac4{N(p-1)}}.
\end{align}
The second condition \eqref{E:real2} is an improvement over the first one \eqref{E:real1} when
\begin{equation}
\label{E:real-comparison}
M[u]^{1-s_c} E[u]^{s_c} > \left(\frac{N s_c}{4}\right)^{\frac{N}{2}}
\left(\frac{2(p+1)}{s_c(p-1)} \left(C_{p,N}\right)^{\frac{N(p-1)}2 +(p+1)} \right)^{\frac2{p-1}}.
\end{equation}

\section{Examples}
\label{S:Examples}

In this section we consider examples of initial data for which Theorems \ref{T:BB}, \ref{T:1} and \ref{T:2} describe global behavior in the energy-critical case, and then in the energy-supercritical case, we finish with a $1d$ example in the energy-subcritical case. We first review some properties of the stationary solution $W$, and then consider gaussian data.

\subsection{Useful properties of $W$, the energy-critical case.}
Recall that $W$ is a stationary solution of \eqref{E:NLS} in the energy-critical
case, given by
$$
W = \frac{1}{\left(1 + \frac{|x|^2}{N(N-2)}
\right)^{\frac{N-2}{2}}}, \quad N \geq 3,
$$
and hence, solves $\Delta W + W^{\frac{N+2}{N-2}} = 0$. We compute
$\|\nabla W\|^2$ and $E[W]$. By the work of Aubin \cite{A76} and Talenti \cite{T76}, $W$ is, up
to symmetries, the only solution of
$$
\left(\int_{\cR^N} W^{\frac{2N}{N-2}}\right)^{\frac{N-2}{2N}} = C_N
\left(\int_{\cR^N} |\nabla W|^2\right)^{\frac{1}{2}},
$$
where $C_N$ is the best constant for the Sobolev inequality
$$
\left(\int_{\cR^N} u^{\frac{2N}{N-2}}\right)^{\frac{N-2}{2N}} \leq
C_N \left(\int_{\cR^N} |\nabla u|^2\right)^{\frac{1}{2}}.
$$
The constant $C_N$ is known (see e.g. \cite{T76}), namely,
$$
C_N = \frac{1}{\sqrt{N(N-2)\pi}}
\left(\frac{\Gamma(N)}{\Gamma(N/2)}\right)^{1/N}.
$$
Furthermore, by a direct integration by parts and the equation
$\Delta W=-W^{\frac{N+2}{N-2}}$, we have
$$
\int |\nabla W|^2=\int W^{\frac{2N}{N-2}}.
$$
Thus,
$$
\left(\int |\nabla W|^2\right)^{\frac{N-2}{2N}} = C_N\left(\int
|\nabla W|^2\right)^{\frac 12}
$$
from which we deduce
$$
\int |\nabla W|^2=\frac1{C_N^N}.
$$
Hence,
\begin{proposition}
$$
\int |\nabla W|^2=(N(N-2)\pi)^{\frac{N}2} \times
\frac{\Gamma(N/2)}{\Gamma(N)}
$$
and
\begin{equation}
 \label{E:energyW}
E[W]=N^{\frac{N}2-1}((N-2)\pi)^{\frac{N}2}\times \frac{\Gamma(N/2)}{\Gamma(N)}.
\end{equation}
\end{proposition}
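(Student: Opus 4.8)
The plan is straightforward, since all the analytic ingredients are already assembled in the paragraph preceding the statement: the Aubin--Talenti characterization of $W$ as the optimizer of the Sobolev inequality, the explicit best constant $C_N = (N(N-2)\pi)^{-1/2}\big(\Gamma(N)/\Gamma(N/2)\big)^{1/N}$, the integration-by-parts identity $\int |\nabla W|^2 = \int W^{\frac{2N}{N-2}}$, and its consequence $\int |\nabla W|^2 = C_N^{-N}$. The proof is then pure substitution and bookkeeping of exponents.

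First I would establish the gradient formula by computing $C_N^{-N}$ directly. Raising $C_N$ to the $N$-th power gives $C_N^N = (N(N-2)\pi)^{-N/2}\,\Gamma(N)/\Gamma(N/2)$, so that $\int |\nabla W|^2 = C_N^{-N} = (N(N-2)\pi)^{N/2}\,\Gamma(N/2)/\Gamma(N)$, which is the first asserted identity. For the energy, I would specialize the energy functional to the critical exponent $p = \frac{N+2}{N-2}$, so that $p+1 = \frac{2N}{N-2}$ and hence $\frac{1}{p+1} = \frac{N-2}{2N}$. Using $\int |W|^{p+1} = \int W^{\frac{2N}{N-2}} = \int |\nabla W|^2$, the two terms of $E[W] = \tfrac12\int|\nabla W|^2 - \tfrac{1}{p+1}\int|W|^{p+1}$ collapse to a single multiple of $\int|\nabla W|^2$, namely $E[W] = \big(\tfrac12 - \tfrac{N-2}{2N}\big)\int|\nabla W|^2 = \tfrac1N\int|\nabla W|^2$. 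Substituting the gradient formula and distributing the factor $\tfrac1N$ across the power $(N(N-2)\pi)^{N/2} = N^{N/2}((N-2)\pi)^{N/2}$ yields $E[W] = N^{N/2-1}((N-2)\pi)^{N/2}\,\Gamma(N/2)/\Gamma(N)$, as claimed.

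There is no genuine obstacle here: the only points requiring care are the consistency of the Gamma-function normalization in $C_N$ and the arithmetic of the exponents. The clean coefficient $\tfrac1N$ in $E[W] = \tfrac1N\int|\nabla W|^2$ is simply the signature of the critical case $s_c=1$, reflecting the relation $\tfrac12 - \tfrac{1}{p+1} = \tfrac1N$ when $p+1 = \tfrac{2N}{N-2}$.
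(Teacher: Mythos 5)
Your proposal is correct and follows exactly the paper's route: the paper derives $\int |\nabla W|^2 = C_N^{-N}$ from the Aubin--Talenti characterization and the Pohozaev-type identity $\int |\nabla W|^2 = \int W^{\frac{2N}{N-2}}$, and the proposition is then the same substitution you carry out, with $E[W] = \big(\tfrac12 - \tfrac{N-2}{2N}\big)\int |\nabla W|^2 = \tfrac1N \int |\nabla W|^2$. Your exponent bookkeeping, including $\tfrac1N \cdot N^{N/2} = N^{N/2-1}$, matches the stated formulas.
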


\subsection{Gaussian Initial Data} \label{S:energy-critical}
We consider the gaussian initial data:
\begin{equation}
 \label{E:gaussian}
u_g(x) = \beta \, e^{-\frac12 \,{\alpha} \, |x|^2},
\quad x \in \cR^N, \quad \alpha, \beta \in (0,\infty).
\end{equation}
The mass and initial variance are
\begin{equation}
M[u_g]  = \beta^2 \, \left(\frac{\pi}{\alpha}\right)^{\frac{N}{2}},
\quad V(0) = \frac{N \, \beta^2 \, \pi^{\frac{N}2}}{2 \, \alpha^{\frac{N}2+1}}.
\end{equation}
Since in the case $s=1$ the energy is
$$
E[u_g] = \frac{\pi^{N/2}}2 \left(\frac{\beta}{\alpha^{(N-2)/4}}\right)^2 \left(\frac{N}2 - \left(\frac{N-2}{N} \right)^{\frac{N+2}2} \left(\frac{\beta}{\alpha^{(N-2)/4}}\right)^{\frac4{N-2}}\right),
$$
we obtain that $E[u_g] > E[W]$ holds when
\begin{equation}
\label{E:s=1-E}
\kappa_s < \frac{\beta}{\alpha^{(N-2)/4}} < \kappa_b,
\end{equation}
where $\kappa_s$ and $\kappa_b$ are the smallest and the largest positive roots (there are only 2 positive roots) of the equation
\begin{equation}
\label{E:s=1-E-kappa}
\frac{N}4 \kappa^2 - \frac12 \left(\frac{N-2}{N}\right)^{\frac{N+2}2} \kappa^{\frac{2N}{N-2}} - N^{\frac{N-2}2} (N-2)^{\frac{N}2} \frac{\Gamma(N/2)}{\Gamma(N)} = 0,
\end{equation}
and several values are listed in Table \ref{T:s=1-E}.

\begin{table}[ht]
\begin{tabular} [c]{|l||l|l|l|l|l|l|}
\hline
$N$ & $N=3$ & $N=4$ & $N=5$  & $N=6$ & $N=7$ & $N=8$ \\
\hline
$\kappa_s$ & $1.0376$ & $1.8388$ & $3.4406$ & $6.8142$ & $14.1738$ & $30.7504$ \\
\hline
$\kappa_b$ & $2.0510$ & $3.5523$ & $6.5483$ & $12.8229$ & $26.4385$ & $56.9593$ \\
\hline
\end{tabular}
\smallskip

\caption{Values of $\kappa_s$ and $\kappa_b$ in \eqref{E:s=1-E}, the two positive roots in \eqref{E:s=1-E-kappa} for $N=3, 4, 5, 6, 7, 8$.}
  \label{T:s=1-E}
\end{table}

We now apply the blow up criteria from Theorems \ref{T:1} and \ref{T:2} to data \eqref{E:gaussian}:
by Theorem \ref{T:1}, or \eqref{E:real1}, we have
\begin{equation}
\label{E:s=1-T1}
\beta > \left(\frac{N}2 \left(\frac{N}{N-2}\right)^{\frac{N}2} \right)^{\frac{N-2}4} \, \alpha^{\frac{N-2}4} := \kappa_{T1} \, \alpha^{\frac{N-2}4},
\end{equation}
we list some of $\kappa_{T1}$ in Table \ref{T:s=1-T}.
By Theorem \ref{T:2}, or \eqref{E:real2},  we have
\begin{equation}
\label{E:s=1-T2}
\beta > \left(\frac{\frac{N}2 \left(\frac{N}{N-2}\right)^{\frac{N}2} }{\frac2{N} \left(\frac{N}{N-2}\right)^{\frac{N}2} \left(\frac{2\pi}{N}\right)^{\frac{N}{N-2}} \left(C_{p,N}\right)^{-\frac{4N}{N-2}} + \frac{N-2}{N}} \right)^{\frac{N-2}4} \, \alpha^{\frac{N-2}4} :=\kappa_{T2} \, \alpha^{\frac{N-2}4} ,
\end{equation}
where
$$
\left (C_{p,N}\right)^{\frac{4N}{N-2}} = 4^{\frac{N-1}{N-2}} \, \pi^{\frac{N}{N-2}} \, \left( \frac{\Gamma(N/2)}{\Gamma(N)} \right)^{\frac2{N-2}}
$$
is from \eqref{E:C-anyN-anypSimple} (with $p=\frac{N+2}{N-2}$) and $\kappa_{T2}$ is listed also in Table \ref{T:s=1-T}.

\begin{table}[h]
\begin{tabular} [c]{|l||l|l|l|l|l|l|}
\hline
$N$ & $N=3$ & $N=4$ & $N=5$  & $N=6$ & $N=7$ & $N=8$\\
\hline
$\kappa_{T1}$
& $1.6709$
& $2.8284$
& $5.1811$
& $10.1250$
& $20.8639$
& $44.9492$ \\
\hline
$\kappa_{T2}$
& $1.1511$
& $3.0237$
& $6.6738$
& $14.0643$
& $30.0765$
& $66.1524$ \\
\hline
\end{tabular}
\smallskip

\caption{Blow up threshold values from Theorems \ref{T:1} and \ref{T:2} for real gaussian for $N=3, 4, 5, 6, 7, 8$.}
  \label{T:s=1-T}
\end{table}

\smallskip

We now consider a few different cases.
\bigskip

%\noindent
\underline{Example: {$N=3, p=5$}}.
\medskip

 Note that $E[u_g] > E[W]$ ~~ if ~ (see Table \ref{T:s=1-E})
\begin{equation}
\label{E:s=1-ME}
1.0376 < \frac{\beta}{\alpha^{1/4}} < 2.0510.
\end{equation}

Thus, by Theorem \ref{T:KMD} (and also Theorem \ref{T:BB-realvalued} for the threshold) solutions scatter when $$\frac{\beta}{\alpha^{1/4}} \leq \kappa_s \approx 1.0376$$
and from the same results solutions blow up in finite time if $\frac{\beta}{\alpha^{1/4}} > \kappa_b \approx 2.0510$. From Table \ref{T:s=1-T}, we obtain that the blow up occurs when $$\beta > 1.6709 \alpha^{1/4}$$ (Theorem \ref{T:1}) or when $$\beta > 1.1511 \alpha^{1/4}$$ (Theorem \ref{T:2}). The last threshold provides the wider range, so Theorem \ref{T:2} gives a better result in this case. We plot all these thresholds in Figure \ref{F:s=1-N=3}.

\begin{figure}%[h]
\includegraphics[scale=1.1]{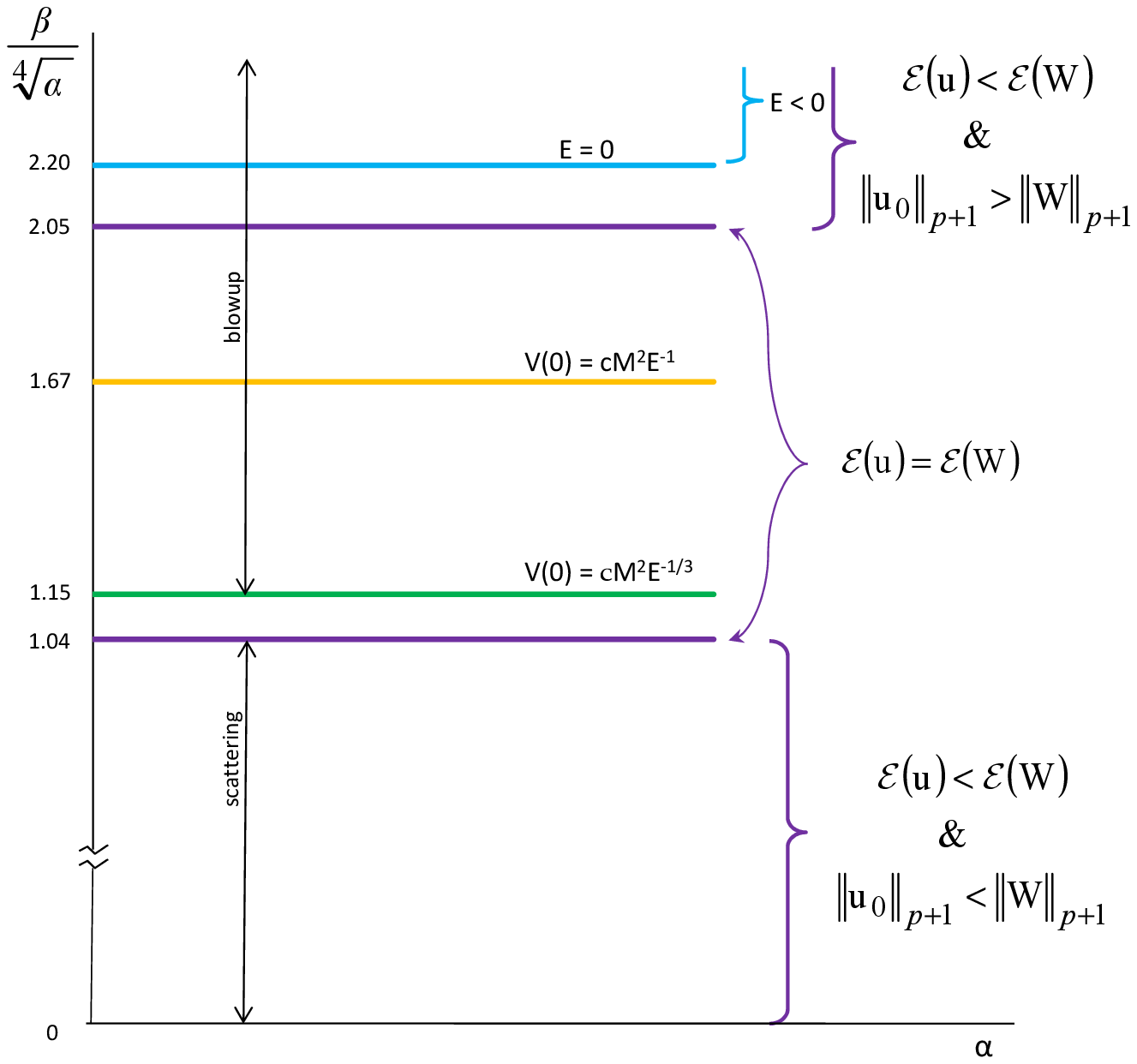}
\caption{Global behavior of solutions to the 3d quintic NLS ($s_c = 1$) with the gaussian initial data $u_0(x) = \beta \, e^{\alpha |x|^2 / 2}$. The top line ($E=0$) indicates the range above which (for $\beta> 2.20 \alpha^{1/4}$) solutions blow up in finite time by the negative energy criterion. The line denoted by $V(0)=c M^2 E^{-1}$ is the threshold for blow up from Theorem \ref{T:1}, see \eqref{E:s=1-T1} and Table \ref{T:s=1-T}; similarly, the line denoted by $V(0)=c M^{2} E^{-1/3}$ is the threshold for blow up from Theorem \ref{T:2}, see \eqref{E:s=1-T2} and Table \ref{T:s=1-T}. In this example, Theorem \ref{T:2} gives the best result. The range for {scattering} is given by Theorem \ref{T:KMD}. }
 \label{F:s=1-N=3}
\end{figure}

\newpage

\underline{Example: $N=4$, $p=4$}.
\medskip

\noindent
The condition $E[u_g]>E[W]$ holds when (see Table \ref{T:s=1-E})
\begin{equation}
\label{E:s=1-N=4-E}
1.8388 < \frac{\beta}{\sqrt\alpha} < 3.5523,
\end{equation}
and the blow up criteria (see Table \ref{T:s=1-T}) give $\beta > 2.8284 \sqrt\alpha$ (by Theorem \ref{T:1}) and $\beta > 3.0237\sqrt\alpha$ (by Theorem \ref{T:2}), thus, Theorem \ref{T:1} produces a better result in this case.
We plot the ranges for blow up and scattering in Figure \ref{F:s=1-N=4}.

\begin{figure}%[h]
\includegraphics[scale=1]{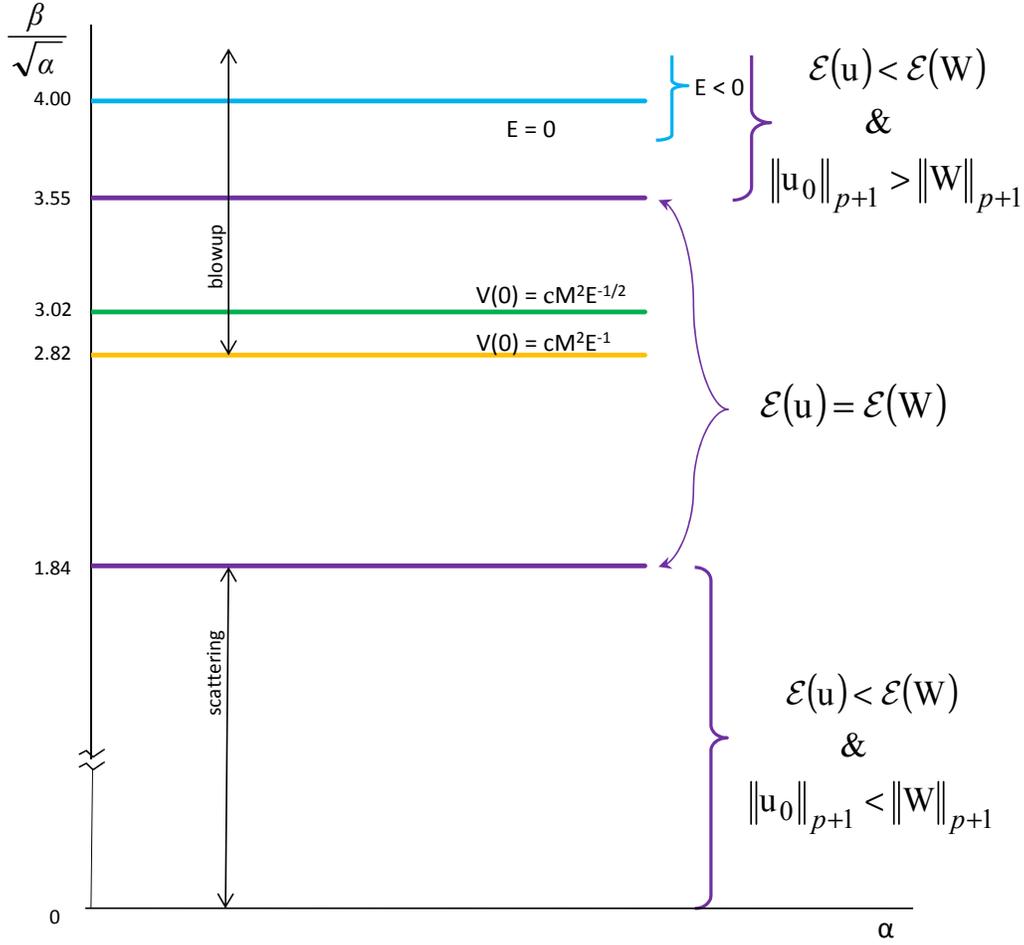}  %
\caption{Global behavior of solutions to the 4d cubic NLS ($s_c = 1$) with the gaussian initial data $u_0(x) = \beta \, e^{\alpha |x|^2 / 2}$. The top line ($E=0$) indicates the range above which (for $\beta> 4 \sqrt\alpha$) solutions blow up by the negative energy criterion. The line denoted by $V(0)= c M^2 E^{-1}$ is the threshold for blow up from Theorem \ref{T:1}, similarly, the line denoted by $V(0)=c M^{2} E^{-1/2}$ is the threshold for blow up from Theorem \ref{T:2}. In this example, Theorem \ref{T:1} gives a wider range, i.e., solutions blow up when $\beta \geq 2.82 \sqrt\alpha$. The range for {scattering} is given by Theorem \ref{T:KMD}. }
  \label{F:s=1-N=4}
\end{figure}

\medskip

\newpage

\subsubsection{The energy-supercritical case.}\label{S:energy-supercritical}
We now point out that both Theorems \ref{T:1} and \ref{T:2} work in the energy-supercritical case of NLS ($s>1$), and thus, it is possible to predict a blow up behavior for an open set of initial data in this case.
\medskip

\underline{Example $s_c>1$: $p=5$, $N=4$}.
\medskip

\noindent In this case $s_c = \frac32$.
The standard negative energy condition $E[u_g] < 0$ produces the range $\beta > \sqrt[4]{54} \, \alpha^{1/4} \approx 2.7108 \, \alpha^{1/4}$ for initial data to blow up in finite time.
By Theorem \ref{T:1}, or \eqref{E:real1}, we have $V(0) < 1.5 M^2 E^{-1}$, which gives the range
\begin{equation}
\label{E:s>1-T1}
\frac{\beta}{\alpha^{1/4}} > \frac{3^{3/4}}{2^{1/4}} \approx 1.9168
\end{equation}
and by Theorem \ref{T:2}, or \eqref{E:real2}, we have $V(0) < C \, M^{7/4} E^{-1/4}$, which produces the range
\begin{equation}
\label{E:s>1-T2}
\frac{\beta}{\alpha^{1/4}} > \left(\frac{27}{\frac{3^6\cdot 5^2 \pi^2}{7^5} + \frac12} \right)^{1/4} \approx 1.2460.
\end{equation}
See Figure \ref{F:s>1-N=4} for a graph of thresholds in this case. Note that except for the small data theory (global existence and scattering for small in the invariant norm data), no other information about scattering or blow up thresholds is known in the energy-supercritical case.

\begin{figure}%[ht]
\includegraphics[scale=1]{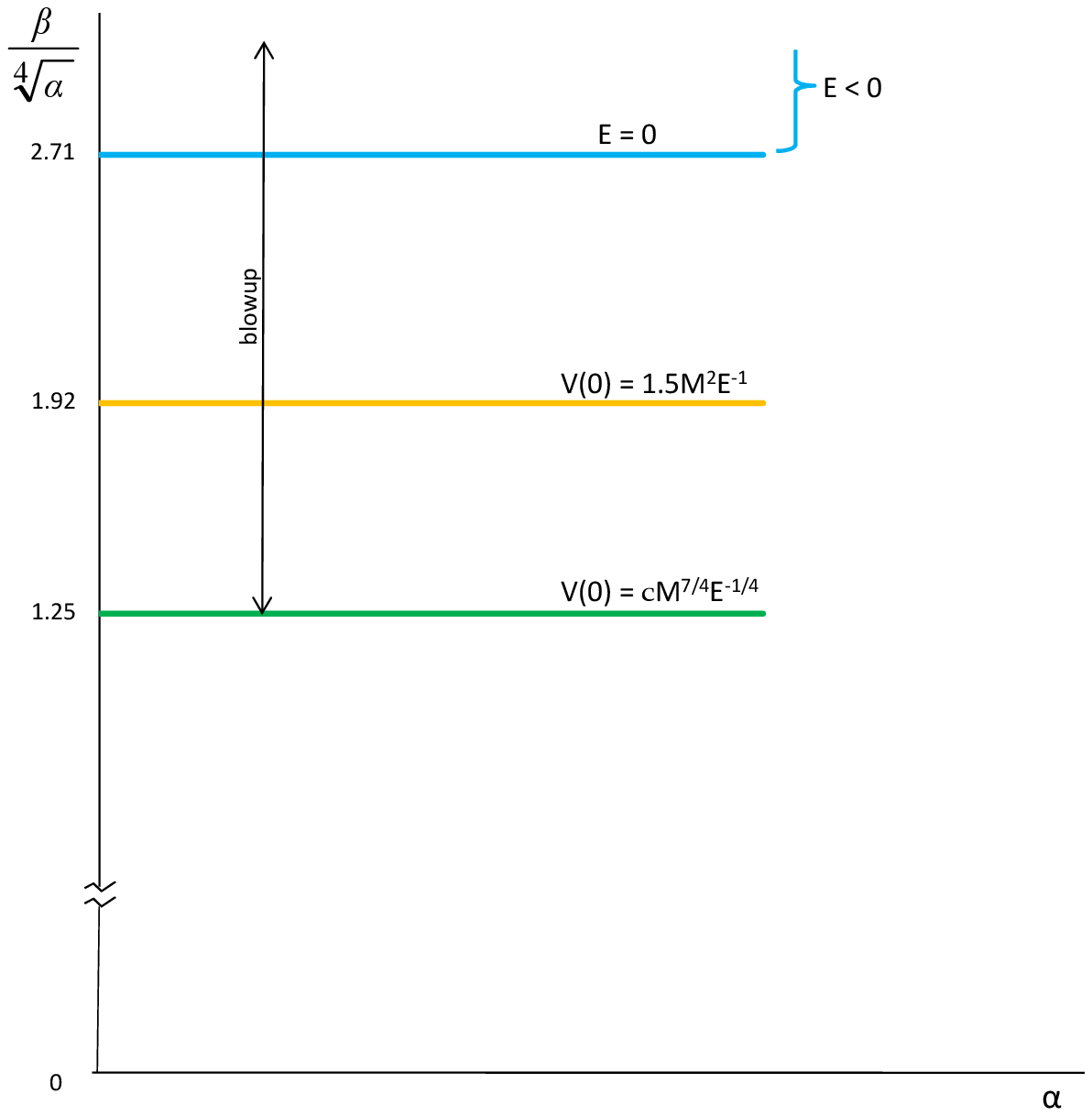}  %
\caption{Global behavior of solutions to the 4d quintic NLS ($s_c=\tfrac32 > 1$) with real Gaussian initial condition $u_0(x) = \beta \, e^{\alpha |x|^2 / 2}$. The top line ($E=0$) indicates the range above which (for $\beta> 2.71 \sqrt[4]\alpha$) solutions blow up by the negative energy criterion. The line denoted by $V(0)= 1.5 M^2 E^{-1}$ is the threshold for blow up from Theorem \ref{T:1}, similarly, the line denoted by $V(0)=c M^{7/4} E^{-1/4}$
is the threshold for blow up from Theorem \ref{T:2}. In this example, Theorem \ref{T:2} gives a wider range of blow up solutions. There are no other thresholds known in this energy-supercritical case.}
 \label{F:s>1-N=4}
\end{figure}

\medskip

\subsubsection{The energy-subcritical case.}
Finally, we consider one-dimensional ($N=1$) example of the NLS equation when $0<s<1$, or the nonlinearity $p>5$.
In this case the scaling index is $s_c = \frac{p-5}{2(p-1)}$, the energy is
\begin{equation}
\label{E:energy-subcrit}
E[u_g]  = \beta^2 \sqrt{\pi \, \alpha} \left(\frac{1}4
- \sqrt{\frac2{(p+1)^3}} \, \frac{\beta^{p-1}}{\alpha} \right).
\end{equation}
Theorem \ref{T:1} guarantees blow up in finite time if
\begin{equation}
\label{E:Thm1-realgaussian}
\frac{\beta^{p-1}}{\alpha} > \frac{p+1}{p-1} \left( \frac{p+1}2 \right)^{\frac{1}2} =: (\kappa_{T1})^{p-1},
\end{equation}
and by Theorem \ref{T:2} the blow-up occurs if
\begin{equation}
\label{E:Thm2-realgaussian}
\frac{\beta^{p-1}}{\alpha} >  \frac{ \frac{p+1}{p-1}  \left( \frac{p+1}2 \right)^{\frac{1}{2}} \frac{1}4}
{\left(\frac{s_c}2 \left(\frac{p+1}2\right)^{\frac{1}2} \left({2\pi}\right)^{\frac{(p-1)}{4}}
(C_{p,1})^{-\left(\frac{(p-1)}{2}+(p+1)\right)}  + \frac1{p-1} \right)} = :(\kappa_{T2})^{p-1}.
\end{equation}
\medskip

\underline{Example $s_c<1$: $p=7$, $N=1$}.
\smallskip

\noindent In this case the scaling index is $s_c = \frac16$, the threshold values from \eqref{E:Thm1-realgaussian} and \eqref{E:Thm2-realgaussian} are $\kappa_{T1} = 1.1776$ and $\kappa_{T2} = 1.1996$. The {\it mass-energy} threshold from Theorem \ref{T:DHR} gives the blow up range when $\beta > 1.2312 \alpha^{1/6}$ and the scattering range $\beta < 1.0844 \alpha^{1/6}$. We graph these results in Figure \ref{F:gauss-nophase-7}.

\begin{figure}%[h]
\includegraphics[scale=0.9]{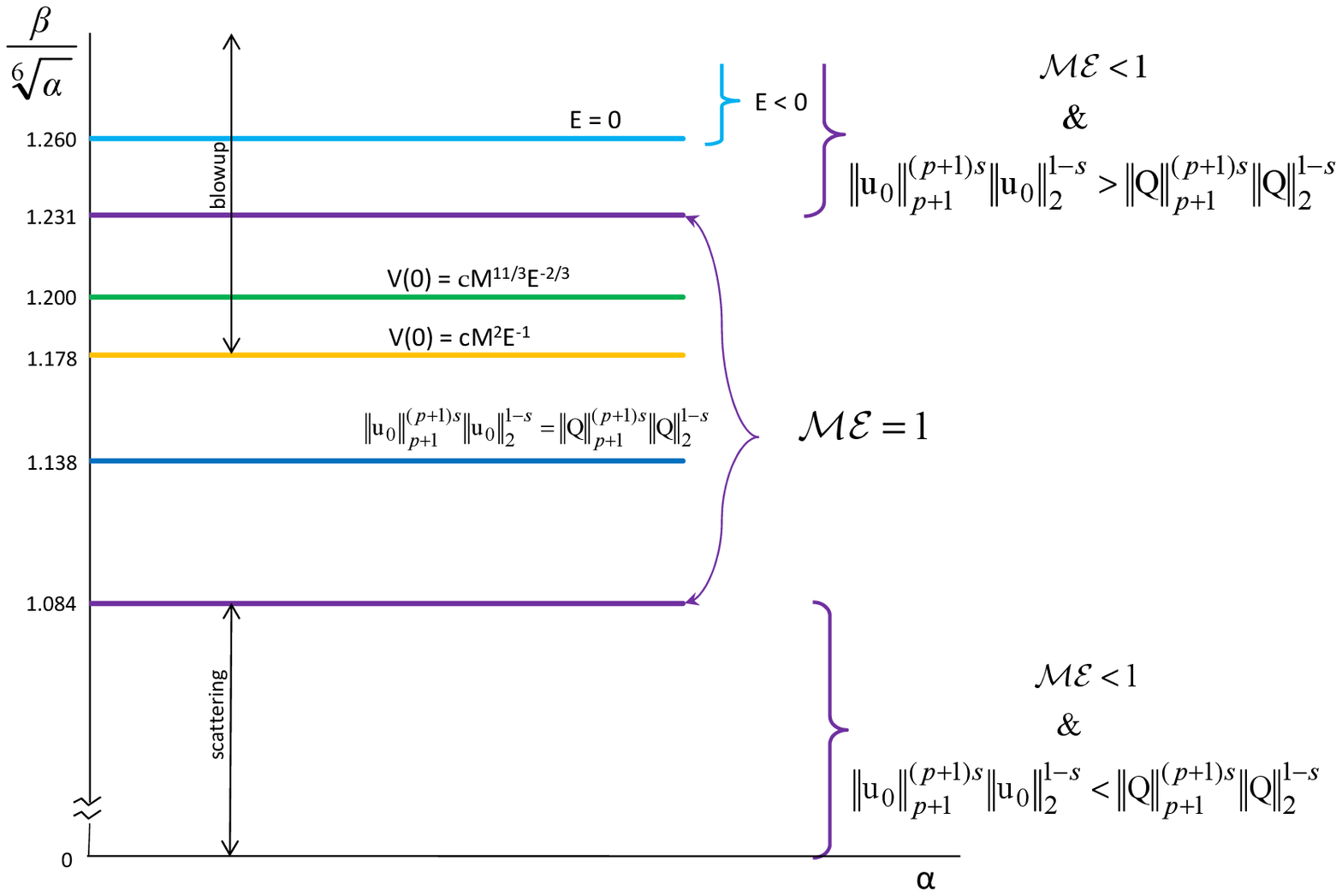}
\caption{Global behavior of solutions to 1d septic ($p=7$) NLS equation ($s_c = \frac16<1$) with the gaussian initial data $u_0(x) = \beta \, e^{\alpha x^2 / 2}$. The top line ($E=0$) indicates the range above which (for $\beta> 1.260 \alpha^{1/6}$) solutions blow up by the negative energy criterion. The line denoted by $V(0)=c M^2 E^{-1}$ is the threshold for blow up from Theorem \ref{T:1}, see \eqref{E:Thm1-realgaussian}; similarly, the line denoted by $V(0)=c M^{11/3} E^{-2/3}$ is the threshold for blow up from Theorem \ref{T:2}, see \eqref{E:Thm2-realgaussian}. In this example, Theorem \ref{T:2} gives a wider range.
The range for {scattering} is given by Theorem \ref{T:DHR}. }
 \label{F:gauss-nophase-7}
\end{figure}

\appendix

\section{Values of sharp constant $C_{p,N}$ }
\label{A:sharp-constants}

\begin{remark}
For $N=1$, we obtain
\begin{equation}
\label{E:C-anyp-N=1}
\left(C_{p,1} \right)^{\frac{3p+1}2} = \frac{(3p+1)^{\frac{p+3}4}}{2(p+1)} \left(\frac{\pi}{(p-1)}\right)^{\frac{p-1}4} \left(\frac{\Gamma\left(\frac{p+1}{p-1}\right)}{\Gamma\left(\frac{p+1}{p-1} + \frac12\right)} \right)^{\frac{p-1}2}.
\end{equation}
For $N=2$, we get
\begin{equation}
\label{E:C-N=2}
\left(C_{p,2}\right)^{2p} = \pi^{\frac{p-1}2} \left(\frac{2p}{p+1}\right)^{\frac{p+1}2}.
\end{equation}
For $N=3$, we have
\begin{equation}
\label{E:C-anyp-N=3}
\left( C_{p,3} \right)^{\frac{5p-1}2} =
\left(\frac{\pi}{3(p-1)}\right)^{\frac{3(p-1)}4}
\frac{(5p-1)^{\frac{3p+1}4}}{2(p+1)}
\left(\frac{\Gamma\left(\frac{p+1}{p-1}\right)}{\Gamma\left(\frac{p+1}{p-1} + \frac32\right)}\right)^{\frac{p-1}2}.
\end{equation}
When $N=4$, we obtain
\begin{equation}
\label{E:C-N=4}
\left(C_{p,4}\right)^{3p-1} = \left(\frac{\pi}{2^{3/2}} \right)^{p-1} \frac{(3p-1)^p}{p^{\frac{p-1}2}(p+1)^{\frac{p+1}2}}.
\end{equation}
Now we fix $p$ and vary the dimension. Let $p=3$. Then
\begin{equation}
\label{E:C-anyN-p=3}
\left(C_{3,N} \right)^{N+4} =  \frac{\pi^{N/2}}{\Gamma(\frac{N}2)}
\frac1{N+2} \left( \frac{N+4}{N} \right)^{\frac{N+2}2}.
\end{equation}
Let $p=5$. Then
\begin{equation}
\label{E:C-anyN-p=5}
\left(C_{5,N} \right)^{2N+6} =  \frac{N}{12} \left(\pi \left(\frac{N+3}{N}\right)\right)^{N+1}
\left[\Gamma\Big(\frac{N+3}2\Big)\right]^{-2}.
\end{equation}
Finally, for $p=7$ we have
\begin{equation}
\label{E:C-anyN-p=7}
\left(C_{7,N} \right)^{3N+8} =
\left(\pi \left(\frac{3N+8}{3N}\right)\right)^{\frac{3N}2} \left(\frac38N+1\right)
\left(\frac{\Gamma(\frac43)}{\Gamma(\frac43+\frac{N}2)}\right)^{3}.
\end{equation}

\end{remark}

\end{document}